\newtheorem{theorem}{Theorem}[section]
\newtheorem{corollary}[theorem]{Corollary}
\newtheorem{lemma}[theorem]{Lemma}
\newtheorem{proposition}[theorem]{Proposition}
\newtheorem{observation}[theorem]{Observation}
\newtheorem{definition}[theorem]{Definition}
\newtheorem{question}[theorem]{Question}
\theoremstyle{definition}
\newtheorem{example}[theorem]{Example}
\newtheorem{remark}[theorem]{Remark}
\newcommand{\gon}[1]{\mbox{$#1$-gon}}
\newcommand{\cycle}[1]{\mbox{$#1$-cycle}}
\newcommand{\tuple}[1]{\mbox{$#1$-tuple}}
\newcommand{\clique}[1]{\mbox{$#1$-clique}}
\newcommand{\simp}[1]{\mbox{$#1$-simplex}}
\newcommand{\poly}[1]{\mbox{$#1$-poly}}
\newcommand{\connected}[1]{\mbox{$#1$-connec}ted}
\newcommand{\regular}[1]{\mbox{$#1$-regular}}
\newcommand{\domino}[1]{\mbox{$#1$-domino}}
\newcommand{\dimensional}[1]{\mbox{$#1$-dimen}\-sio\-nal}
\newcommand{\face}[1]{\mbox{$#1$-face}}
\newcommand{\R}{\ensuremath{\mathbb{R}}} % reels
\newcommand{\N}{\ensuremath{\mathbb{N}}} % entiers
\newcommand{\Z}{\ensuremath{\mathbb{Z}}} % entiers relatifs
\newcommand{\RP}{\ensuremath{\mathbb{RP}}} % plan projectif
\newcommand{\cF}{\ensuremath{\mathcal{F}}} % complexe simplicial
\newcommand{\cS}{\ensuremath{\mathcal{S}}} % circle
\newcommand{\ssm}{\ensuremath{\smallsetminus}} % privation
\newcommand{\set}[2]{\ensuremath{\left\{#1\,\middle|\,#2\right\}}} % la notation {a | P(a)}
\DeclareMathOperator{\conv}{conv} % enveloppe convexe
\DeclareMathOperator{\gr}{gr} % graphe
\newcommand{\eqdef}{\mbox{~\raisebox{0.2ex}{\scriptsize\ensuremath{\mathrm:}}\ensuremath{=} }}
\newcommand{\Fracfloor}[2]{\ensuremath{\left\lfloor\frac{#1}{#2}\right\rfloor}} % valeur entiere fraction
\newcommand{\dotprod}[2]{\langle#1 \,|\, #2\rangle} % produit scalaire
\newcommand{\simplex}{\triangle} % simplex
\definecolor{darkblue}{rgb}{0,0,0.7} % couleur bleue foncee (impresison couleur)
\newcommand{\darkblue}{\color{darkblue}}
\newcommand{\defn}[1]{\emph{\darkblue #1}} % mise en valeur d'une definition
\newcommand{\fref}[1]{Figure~\ref{#1}} % reference figures
\newcommand{\svs}{\vspace{.2cm}} % petit espacement vertical
\newcommand{\ie}{\textit{i.e.}~} % id est
\begin{document}

\title{Polytopality and Cartesian products of graphs}
\thanks{Julian Pfeifle was partially supported by MEC grants MTM2008-03020 and MTM2009-07242, and AGAUR grant 2009 SGR 1040. Vincent Pilaud and Francisco Santos were partially supported by MEC grant MTM2008-04699-C03-02.}

\author{Julian Pfeifle}
\address{Departament de Matem\`atica Aplicada II, Universitat Polit\`ecnica de Catalunya, Barcelona, Spain}
\email{julian.pfeifle@upc.edu}

\author{Vincent Pilaud}
\address{\'Equipe Combinatoire et Optimisation, Universit\'e Pierre et Marie Curie, Paris, France}
\email{vpilaud@math.jussieu.fr}

\author{Francisco Santos}
\address{Departamento de Matem\'aticas Estad\'istica y Computaci\'on, Universidad de Cantabria, Santander, Spain}
\email{francisco.santos@unican.es}

\date{}

\maketitle

\vspace{-.5cm}
\begin{abstract}
We study the question of polytopality of graphs: when is a given graph the graph of a polytope? We first review the known necessary conditions for a graph to be polytopal, and we provide several families of graphs which satisfy all these conditions, but which nonetheless are not graphs of polytopes.

Our main contribution concerns the polytopality of Cartesian products of non-polytopal graphs. On the one hand, we show that products of simple polytopes are the only simple polytopes whose graph is a product. On the other hand, we provide a general method to construct (non-simple) polytopal products whose factors are not polytopal.
\end{abstract}

Even though graphs are perhaps the most prominent feature of polytopes, we are still far from being able to answer several basic questions regarding them. For applications, one of the most important ones is to bound the diameter of the graph in terms of the number of variables and inequalities defining the polytope~\cite{s-cehc-10}. From a theoretical point of view, it is striking that we cannot even efficiently decide whether a given graph occurs as the graph of a polytope or not.

Steinitz' theorem from 1906 completely characterizes graphs of \dimensional{3} polytopes as the 3-connected planar graphs~\cite{s-pr-22}. For higher dimensions, the situation is much more complicated: no general characterization of graphs of polytopes is known, even in dimension~$4$. In fact, Perles observed that absolutely every graph is an induced subgraph of the graph of some \dimensional{4} polytope.

\svs
In this paper, we try to shed light upon these questions and study how polytopality behaves with respect to some common operations on graphs and polytopes. We start by reviewing in Section~\ref{subsec:necessaryconditions} some necessary conditions for a graph to be polytopal: Balinski's Theorem~\cite{b-gscps-61}, the $d$-Principal Subdivision Property~\cite{b-ncp-67} and the Separation Property~\cite{k-ppg-64}. One of our goals is to construct graphs satisfying these properties, but which nonetheless are not graphs of polytopes. We say that such graphs are non-polytopal for ``non-trivial reasons''. Moreover, since polytopes of different dimensions can have the same graph, it is also interesting to study the \defn{polytopality range} of a graph, \ie the set of possible dimensions of its realizations. For example, the polytopality range of the complete graph~$K_n$ on~$n\ge 5$ vertices is $\{4,\dots,n-1\}$. Polytopes of dimension three are also special in this respect: the graph of a $3$-polytope is never the graph of a $d$-polytope for any other~$d$.

We then focus on graphs of simple polytopes. Apart from being regular, they are special in the sense that they leave no ambiguity: the whole face lattice of a simple polytope can be (efficiently) recovered from its graph~\cite{bm-ppi-87,k-swtsp-88,f-fsppt-09}. In Section~\ref{subsec:simplepolytopes}, we construct families of non-simply-polytopal graphs for non-trivial reasons. Our main tool is the remark that every induced cycle of length $3$, $4$ or~$5$ in the graph of a simple polytope defines a \dimensional{2} face.

To close the first part of the paper, we study in Section~\ref{subsec:starclique} the behavior of polytopality with respect to the star-clique operation, which replaces a vertex of degree~$d$ by a \clique{d}. In dimension~$3$, this is the usual $\Delta Y$-operation, involved in one of the proofs of Steinitz' Theorem~\cite{z-lp-95}.

\svs
The second part of this paper is dedicated to the study of the polytopality of Cartesian products of graphs. Cartesian products of polytopal graphs are automatically polytopal, and their polytopality range has been the subject of recent research~\cite{jz-ncp-00,z-ppp-04,sz-capdp,mpp-psnp}. The main contribution of this paper concerns the polytopality of Cartesian products of non-polytopal graphs. On the one hand, we show in Section~\ref{subsec:simplypolytopalproducts} that products of simple polytopes are the only simple polytopes whose graph is a product. On the other hand, we provide in Section~\ref{subsec:polytopalproducts} a general method to construct (non-simple) polytopal products whose factors are not polytopal. To illustrate the possible behavior of polytopality under Cartesian product, we discuss various examples of products of a non-polytopal graph by a segment in Section~\ref{subsec:segment}.

%%%%%%%%%%%%%%%%%%%%%%%%%%%%%%%

\section{Polytopality of graphs}\label{sec:polytopalitygraphs}

\begin{definition}
A graph~$G$ is \defn{polytopal} if it is isomorphic to the graph of some polytope~$P$. If~$P$ is \dimensional{d}, we say that~$G$ is \poly{d}topal.
\end{definition}

In small dimension, polytopality is easy to deal with. For example, \poly{2}topal graphs are exactly cycles. The first interesting question is \poly{3}topality, which is characterized by Steinitz' ``Fundamental Theorem of convex types'':

\begin{theorem}[Steinitz~\cite{s-pr-22}]\label{theo:steinitz}
A graph~$G$ is the graph of a \poly{3}tope~$P$ if and only if $G$~is planar and \connected{3}. Moreover, the combinatorial type of~$P$ is uniquely determined by~$G$.\qed
\end{theorem}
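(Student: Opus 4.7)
The plan is to prove necessity, sufficiency, and uniqueness separately, using Schlegel diagrams and convex position for the forward direction, an inductive reduction to the tetrahedron for sufficiency, and Whitney's embedding theorem for uniqueness.

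For the necessity direction, suppose $P$ is a $3$-polytope with graph $G$. Planarity will come from the Schlegel diagram: picking any facet $F$ and a viewpoint just outside $P$ across $F$, the central projection of $\partial P \smallsetminus \mathrm{int}(F)$ onto the affine hull of $F$ lands injectively in $F$ and produces a straight-line planar drawing of $G$. Three-connectedness is the $d=3$ case of Balinski's theorem quoted in the introduction. An elementary direct argument also works: to reconnect the graph after removing vertices $u \neq v$, I would pick a linear functional $\varphi$ vanishing on $u$ and $v$ but not constant on $P$, and use that $\varphi$-monotone edge paths from any remaining vertex lead to a $\varphi$-maximizer and a $\varphi$-minimizer, neither of which can be $u$ or $v$.

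For the sufficiency direction, I would follow the $\Delta Y$-reduction strategy advertised in Section~\ref{subsec:starclique}. The combinatorial ingredient is that every $3$-connected planar graph can be reduced to $K_4$ by a finite sequence of $\Delta Y$ and $Y\Delta$ moves (together with a few auxiliary edge operations) while maintaining $3$-connectedness and planarity throughout. The base case is clear: $K_4$ is the graph of the tetrahedron. The geometric ingredient is that both moves preserve polytopality in dimension $3$: the $Y\Delta$ (star-clique) move is realized by truncating a simple vertex of a polytope, whose link is a triangle; the reverse $\Delta Y$ move is realized by pulling a new apex slightly beyond a triangular facet, thereby collapsing that triangle into a degree-$3$ vertex. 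Composing these constructions in reverse along a reduction sequence gives a $3$-polytope with graph $G$.

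Uniqueness of the combinatorial type follows from Whitney's theorem: a $3$-connected planar graph has, up to reflection, a unique embedding in $S^2$, so its set of facial cycles is determined by $G$ alone. Since the $2$-faces of any realization of $G$ as a $3$-polytope must be exactly these facial cycles (visible already from the Schlegel diagram above), the entire face lattice of $P$ is encoded in $G$. The main obstacle I expect is the sufficiency direction. Combinatorially, one must guarantee that a $\Delta Y/Y\Delta$ reduction to $K_4$ exists and preserves $3$-connectedness and planarity at every intermediate step; geometrically, the $\Delta Y$ realization is delicate, since the apex must be placed so that exactly the three intended edges appear, no accidental coplanarities arise, and global convexity is maintained. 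Controlling these local geometric choices across an entire reduction sequence is the technical heart of Steinitz's original argument.
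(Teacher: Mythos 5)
First, a point of calibration: the paper does not prove this theorem. It is quoted as a classical result, with \cite{g-cp-03,z-lp-95} cited for the known proof strategies, so your proposal can only be measured against the literature. Your outline follows the $\Delta Y$/$Y\Delta$ reduction, which is the approach the paper alludes to in Section~\ref{subsec:starclique}. The necessity direction (Schlegel diagram for planarity, Balinski for $3$-connectivity) and the uniqueness claim (Whitney's unique-embedding theorem pins down the facial cycles, hence the face lattice; compare Theorem~\ref{theo:whitney}) are correct and standard.

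The gap is in the sufficiency direction, and it is not a small one: the two lemmas you assert there \emph{are} Steinitz's theorem. (1) The combinatorial reduction lemma --- that every $3$-connected planar graph reaches $K_4$ by $\Delta Y$/$Y\Delta$ exchanges plus series-parallel simplifications --- requires a genuine induction; the intermediate graphs acquire parallel edges and degree-$2$ vertices, so one cannot literally ``maintain $3$-connectedness and planarity throughout'' within the class of simple graphs, and specifying which auxiliary reductions are permitted is where most of the work lies. (2) Your geometric realizations are not correct as stated. The $Y\Delta$ exchange (delete a degree-$3$ vertex, join its three neighbours pairwise) is not the star-clique operation (replace the vertex by a triangle of three \emph{new} vertices), so truncation does not realize it. For the reverse move, placing an apex ``slightly beyond a triangular facet'' stacks a pyramid and \emph{retains} all three triangle edges; to delete an edge the apex must lie exactly in the plane of the facet across that edge, hence to delete all three it must be the common point of three facet planes, and one must verify that this point lies beyond the triangle and beneath every other facet --- which fails for a general realization and is why the classical proof passes to the polar polytope (to trade one move for the other) and re-realizes at each step. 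You flag these difficulties yourself, but flagging them does not discharge them: as written this is a correct road map, not a proof.
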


We refer to~\cite{g-cp-03,z-lp-95} for a discussion of three approaches for proving this fundamental theorem.

The first step to realizing a graph~$G$ is to understand the possible face lattice of a polytope whose graph is~$G$. For example, it is often difficult to decide which cycles of~$G$ can define \face{2}s of a \poly{d}tope realizing~$G$. In dimension~$3$, graphs of \face{2}s are characterized by the following separation condition:

\begin{theorem}[Whitney~\cite{w-nspg-32}]\label{theo:whitney}
Let~$G$ be the graph of a \poly{3}tope~$P$. The graphs of the \face{2}s of~$P$ are precisely the induced cycles in~$G$ that do not separate~$G$.\qed
\end{theorem}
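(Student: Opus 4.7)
The strategy reduces the statement, via Steinitz' Theorem, to a purely graph-theoretic characterization of facial cycles in a planar embedding of $G$. By Theorem~\ref{theo:steinitz} the graph $G$ is planar and \connected{3}, and a Schlegel diagram of $P$ based at any \face{2} realizes $G$ as a concrete planar embedding whose facial cycles are in bijection with the \face{2}s of $P$; by Whitney's uniqueness theorem for planar embeddings of \connected{3} planar graphs, this bijection is intrinsic to $G$. It therefore suffices to show that, in this embedding, the facial cycles are precisely the induced non-separating cycles of $G$.

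For the ``$\Leftarrow$'' direction, I would start with an induced non-separating cycle $C$ and apply the Jordan Curve Theorem to its planar image: $C$ splits the plane into two open regions, and the vertices of $V(G)\ssm V(C)$ are distributed between them. If both regions contained a vertex of $G$, any edge crossing between them would violate planarity, so every path between opposite sides would pass through $V(C)$; removing $V(C)$ would then disconnect $G$, contradicting the hypothesis. Hence one region, say the inner one, is vertex-free, and it is also edge-free since an edge inside it would be a chord of $C$ and $C$ is induced. That region is therefore a face of the embedding bounded by $C$, so $C$ is the graph of a \face{2} of $P$.

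For the ``$\Rightarrow$'' direction, let $F$ be a \face{2} of $P$ and $C$ its graph. Being the boundary of a convex polygon, $C$ is a cycle. If two vertices $u,v\in V(F)$ were joined by an edge $e$ of $P$, convexity of $F$ would force $e\subseteq F$, and since $e$ is itself a face of $P$ it would have to be an edge of the polygon $F$; thus $C$ admits no chord in $G$, \ie $C$ is induced. To show non-separation, I would embed $G$ as the Schlegel diagram based at $F$, so that $C$ becomes the outer cycle and every other vertex lies strictly inside the disk, and then prove that $G[V(G)\ssm V(C)]$ is connected.

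The main obstacle is this last connectedness assertion: Balinski only guarantees that $G$ survives the removal of any two vertices, which is insufficient when $|V(C)|\ge 3$. The cleanest route I see is to work in the dual. Balinski applied to the polar polytope $P^*$ implies that the face-adjacency graph of $\partial P$ stays connected after deleting the vertex associated to $F$, so any two \face{2}s $F_u\ni u$ and $F_v\ni v$ (with $F_u,F_v\ne F$, available since $u,v\notin V(C)$) are joined by a chain of \face{2}s $F_u=F_0,F_1,\dots,F_k=F_v$, consecutive ones sharing an edge. Concatenating suitable arcs of the cycles of the $F_i$ and rerouting around each $V(C)\cap V(F_i)$ along the two arcs of $F_i$ between the entry and exit edges produces a $u$--$v$ path in $G[V(G)\ssm V(C)]$, closing the proof. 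Equivalently, one may cite the general fact that facial cycles of any planar embedding of a \connected{3} planar graph coincide with its induced non-separating cycles, which packages both directions at once.
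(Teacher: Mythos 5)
The paper offers no proof of this statement: it is quoted as a classical theorem of Whitney with a bare citation and a \qed, just as Theorem~\ref{theo:steinitz} is quoted without proof. So there is no in-paper argument to compare against; what can be said is that your proof is correct and essentially self-contained modulo Steinitz. The ``$\Leftarrow$'' direction (Jordan curve argument in the Schlegel embedding: one side of an induced non-separating cycle must be free of vertices, hence free of edges by inducedness, hence is a face of the embedding, hence corresponds to a \face{2} of~$P$, the unbounded face corresponding to the base facet) is complete as written. In the ``$\Rightarrow$'' direction, inducedness of the cycle~$C$ of a \face{2}~$F$ is fine, and the dual-graph argument for non-separation is the right idea, but the ``rerouting'' step is where all the content sits and you should make explicit the two facts that justify it: (a) for each face $F_i\ne F$ in the chain, $F_i\cap F$ is a face of~$P$, hence at most an edge, so $V(F_i)\ssm V(C)$ induces a nonempty \emph{path} on the polygon~$F_i$ (a polygon has at least three vertices and you delete a contiguous arc of at most two of them); and (b) the edge shared by consecutive faces $F_i$ and $F_{i+1}$ lies in exactly two \face{2}s, neither of which is~$F$, so it is not an edge of~$F$ and at least one of its endpoints avoids~$V(C)$ --- which is exactly what makes consecutive paths overlap. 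With these two observations the concatenation yields a $u$--$v$ walk in $G[V(G)\ssm V(C)]$, and the proof closes. Two minor remarks: the appeal to Whitney's unique-embedding theorem is unnecessary (the characterization ``induced and non-separating'' is embedding-independent, so you may as well work in the Schlegel embedding throughout) and slightly awkward, since that uniqueness statement is itself a Whitney theorem of comparable depth to the one being proved; and for the dual-graph step you only need the facet-adjacency graph to remain connected after deleting one vertex, so $2$-connectivity of the polar polytope already suffices.
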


In contrast to the easy $2$- and \dimensional{3} worlds, \poly{d}topality becomes much more involved as soon as~$d\ge 4$. As an illustration, the existence of neighborly polytopes (such as the well-known cyclic polytopes) proves that all possible edges can be present in the graph of a \poly{4}tope. Starting from a neighborly polytope, and stacking vertices on undesired edges, one can even learn the following:

\begin{observation}[Perles]
Every graph is an induced subgraph of the graph of a \poly{4}tope.
\end{observation}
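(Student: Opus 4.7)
The plan is to construct, given an arbitrary graph $G$ on $n$ vertices, an explicit \poly{4}tope $P$ containing $n$ distinguished ``labelled'' vertices identified with $V(G)$, such that the induced subgraph of the graph of $P$ on those labelled vertices is exactly $G$. I would start with the cyclic \poly{4}tope $C_4(n)$, whose vertices I label by $V(G)$. Since $C_4(n)$ is $2$-neighborly, its graph is $K_n$; so every edge of $G$ is already present, and the task reduces to destroying each non-edge of $G$ while preserving all edges between labelled vertices.

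To destroy a single non-edge $uv \notin E(G)$, I would pull an auxiliary vertex $w_{uv}$ slightly outside the current polytope, very close to the midpoint of $uv$. If $w_{uv}$ is chosen sufficiently near to this midpoint, the only facets of the current polytope visible from $w_{uv}$ are those containing the edge $uv$. Taking the convex hull with $w_{uv}$ removes exactly these facets and replaces them by pyramids over the horizon ridges. In particular, no facet of the new polytope contains both $u$ and $v$, so $uv$ is no longer an edge. On the other hand, any other labelled edge $xy$ belongs to some facet $F$ not containing $uv$, and $F$ is preserved, so $xy$ survives. The new vertex $w_{uv}$ acquires edges to various vertices, but since $w_{uv}$ is not labelled, these do not affect the induced subgraph on the labelled set.

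Iterating this over all non-edges of $G$ in any order---which is possible because the set of edges between labelled vertices shrinks monotonically, so each targeted non-edge is still an edge of the current polytope when its turn arrives---produces a \poly{4}tope whose induced subgraph on $V(G)$ is exactly $G$. The main technical step is the \emph{edge-pulling lemma}: for $w_{uv}$ close enough to the midpoint of $uv$ (and just outside the current polytope), the set of visible facets is precisely the set of facets through $uv$. This amounts to finitely many strict inequalities constraining the position of $w_{uv}$, all of which can be satisfied simultaneously by taking $w_{uv}$ close enough to the midpoint; everything else is routine bookkeeping on face lattices.
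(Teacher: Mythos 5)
Your construction is correct and is exactly the one the paper alludes to: start from a ($2$-)neighborly \poly{4}tope, whose graph is $K_n$, and kill each unwanted edge $uv$ by pulling a new, unlabelled vertex beyond precisely the facets containing $uv$, which destroys $uv$ while preserving every other edge and creating no new edges among old vertices. The only point worth sharpening is that ``close enough to the midpoint'' is not by itself sufficient --- the displacement must be taken in the relative interior of the normal cone of the edge $uv$, so that $w_{uv}$ lies beyond \emph{all} facets through $uv$ rather than just some of them --- but such points exist arbitrarily close to the midpoint, so your argument goes through.
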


%%%%%%%%%%%

\subsection{Necessary conditions for polytopality}\label{subsec:necessaryconditions}

It is a long-standing question of polytope theory how to determine whether a graph is~\poly{d}to\-pal or not, without enumerating all \poly{d}topes with the same number of vertices. Here we recall some general necessary conditions and apply them to discuss polytopality of small examples.

\begin{proposition}\label{prop:necessaryconditions}
A \poly{d}topal graph~$G$ satisfies the following properties:
\begin{enumerate}
\item \defn{Balinski's Theorem}: $G$~is \connected{d}~\cite{b-gscps-61}.

\item \defn{Principal Subdivision Property} ($d$-PSP): Every vertex of~$G$ is the principal vertex of a principal subdivision of~$K_{d+1}$. Here, a \defn{subdivision} of~$K_{d+1}$ is obtained by replacing edges by paths, and a \defn{principal subdivision} of~$K_{d+1}$ is a subdivision in which all edges incident to a distinguished \defn{principal vertex} are not subdivided~\cite{b-ncp-67}.

\item \defn{Separation Property}: The maximal number of components into which~$G$ may be separated by removing~$n>d$ vertices equals~$f_{d-1}\big(C_d(n)\big)$, the maximum number of facets of a \poly{d}tope with $n$~vertices~\cite{k-ppg-64}.\qed
\end{enumerate}
\end{proposition}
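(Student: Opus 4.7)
All three conditions are classical; my plan follows the original lines of argument, treating each in turn.

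For (1), the strategy is a linear-functional sweep. I pick an affine functional $f\colon\R^d\to\R$ vanishing on the affine hull of $S$; this is possible because $\dim\mathrm{aff}(S)\le d-2$. Let $F^+$ and $F^-$ denote the faces of $P$ on which $f$ attains its maximum and minimum, whose graphs are connected by induction on $d$. From any vertex $u\in V(P)\ssm S$ with $f(u)>0$, repeatedly moving to a neighbour of strictly larger $f$-value yields a monotone path from $u$ to $F^+$ that avoids $S$ because $f|_S\equiv 0$; symmetrically, any vertex with $f(u)<0$ reaches $F^-$. Choosing $f$ to vanish additionally on one well-chosen vertex of $V(P)\ssm S$ (possible because the space of functionals vanishing on $S$ is at least two-dimensional) produces a bridge between $F^+$ and $F^-$, completing the connectedness argument.

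For (2), I argue by induction on $d$. The base case $d=3$ is immediate from Steinitz' theorem (Theorem~\ref{theo:steinitz}) and the cyclic arrangement of 2-faces at any vertex of a 3-polytope. For the inductive step, fix a vertex $v$ of a $d$-polytope $P$ and consider its vertex figure $Q\eqdef P/v$, a $(d-1)$-polytope whose vertices $q_1,\ldots,q_k$ correspond to the edges $vv_i$ at $v$ and whose edges correspond to the 2-faces of $P$ at $v$. The key observation is that each 2-face $F$ at $v$ containing $vv_i$ and $vv_j$ yields a path $\pi_F=\partial F\ssm\{v\}$ in $G-v$ from $v_i$ to $v_j$, and that paths $\pi_F,\pi_{F'}$ arising from distinct 2-faces are internally disjoint, because any two 2-faces at $v$ meet in either $\{v\}$ or a single edge at $v$. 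Applying the inductive hypothesis to $Q$ at some $q_1$ produces a principal subdivision of $K_d$ in the graph of $Q$; translating each of its edges by the corresponding $\pi_F$ and prepending the $d$ direct edges $vv_1,\ldots,vv_d$ then yields the desired principal subdivision of $K_{d+1}$ in $G$ at $v$.

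For (3), I would follow Klee: given $S\subseteq V(P)$ with $|S|=n>d$, associate to each connected component of $G\ssm S$ a distinct facet of $\conv(S)\subseteq\R^d$ (a facet ``visible'' from the vertices of the component after a suitable projective transformation), and then bound the number of such facets by $f_{d-1}(C_d(n))$ via the Upper Bound Theorem, equality being realised by cyclic polytopes themselves. I expect the main obstacle to lie precisely here: the correspondence between components and facets requires delicate bookkeeping via Gale duality, which is the deepest step of Klee's original argument. By contrast, parts (1) and (2) are essentially formal once the linear-functional and vertex-figure machinery is in place, the only subtleties being the genericity choice of $f$ in (1) and the verification of internal disjointness of the 2-face paths in (2).
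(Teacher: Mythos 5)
The paper itself offers no proof of this proposition: all three items are stated with citations to Balinski, Barnette and Klee and closed with a qed symbol. So your sketch must be judged against the classical arguments rather than against anything in the text. Parts (1) and (2) are essentially correct reconstructions of those arguments, with two points you should make explicit. In (1), your bridge between $F^+$ and $F^-$ only works when $\max_P f>0$ and $\min_P f<0$; since $f$ vanishes on $S\cup\{v_0\}$ it may happen that, say, $f\le 0$ on all of $P$, in which case $F^+$ is the zero face of $f$ and may contain all of $S$, so its connected graph is useless. In that case one instead routes every vertex of $V(P)\ssm S$ to $F^-$ (a strictly decreasing path starting from a vertex with $f\le 0$ leaves the zero set after its first step, hence avoids $S$). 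In (2), pairwise internal disjointness of the paths $\pi_F$ is true but not quite sufficient: your translated paths are concatenations of several $\pi_F$ passing through intermediate neighbours $v_k$ of $v$, so you also need that no \emph{internal} vertex of any $\pi_F$ is itself a neighbour of $v$ in $G$. This does hold, by the same face-intersection principle: if $w\in F$ and $vw$ is an edge of $P$, then $\conv\{v,w\}$ is a face of $P$ contained in the polygon $F$, hence an edge of $F$, so $w$ is an endpoint of $\pi_F$. With that observation the transfer of a $K_d$-subdivision of the vertex figure into $G-v$ goes through (and note you never actually use principality of the subdivision inside $Q$).

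The genuine gap is in (3), and you have located it but misdiagnosed it: no Gale duality is needed, and the step you defer is not the deep one. The argument is: each vertex of $P$ outside $S$ is not in $\conv(S)$ (a vertex of a polytope never lies in the convex hull of the remaining vertices), hence, when $\conv(S)$ is full-dimensional, lies beyond some facet of $\conv(S)$; and for any hyperplane $H$, the vertices of $P$ strictly on one side of $H$ induce a connected subgraph of $G$ (the same monotone-path sweep as in (1)). Therefore two vertices beyond a common facet of $\conv(S)$ lie in the same component of $G\ssm S$, so the number of components is at most the number of facets of $\conv(S)$, which is at most $f_{d-1}\big(C_d(n)\big)$ by the Upper Bound Theorem. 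As written, your proposal asserts the correspondence between components and facets without the connectedness-beyond-a-hyperplane lemma that makes it injective, and it leaves unaddressed both the degenerate case $\dim\conv(S)<d$ and the fact that the stated \emph{equality} (as opposed to an upper bound) requires exhibiting a polytope attaining the bound --- which the paper supplies separately in Example~\ref{exm:singleton} by stacking on all facets of $C_d(n)$.
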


\begin{remark}\label{remark:3poly}
The principal subdivision property together with Steinitz' Theorem ensure that no graph of a \poly{3}tope is \poly{d}topal for~$d\ne3$. In other words, any \poly{3}tope is the unique polytopal realization of its graph. This property is also obviously true in dimension~$0$, $1$ or $2$. In contrast, it is strongly wrong in dimension~$4$ and higher as the complete graph shows: for every $n\ge 5$ and for every $d\in\{4,\dots,n-1\}$ there are polytopes whose graph is the complete graph $K_n$.
\end{remark}

Before applying Proposition~\ref{prop:necessaryconditions} on several examples, let us insist on the fact that these necessary conditions are not sufficient (see also Examples~\ref{exm:marc+antonio} and~\ref{exm:diamond}):

\begin{example}[Non-polytopality of the complete bipartite graph~\cite{b-ncp-67}]
The complete bipartite graph~$K_{m,n}$ is not polytopal, for any two integers $m,n\ge 3$, although~$K_{n,n}$ satisfies all properties of Proposition~\ref{prop:necessaryconditions} to be \poly{4}topal as soon as~$n\ge 7$.

Indeed, assume that~$K_{n,m}$ is the graph of a \poly{d}tope~$P$. Then~$d\ge4$ because~$K_{n,m}$ is non-planar. Consider the induced subgraph~$H$ of~$K_{n,m}$ corresponding to some \face{3}~$F$ of~$P$. Because~$H$ is induced and has minimum degree at least~$3$, it contains a $K_{3,3}$ minor, so $F$~was not a \face{3} after all.
\end{example}

\begin{example}[Circulant graphs]
Let~$n$ be an integer and~$S$~denote a subset of~$\left\{1,\dots,\Fracfloor{n}{2}\right\}$. The \defn{circulant} graph~$\Gamma_n(S)$ is the graph whose vertex set is~$\Z_n$, and whose edges are the pairs of vertices with difference in~$S\cup(-S)$. Observe that the degree of~$\Gamma_n(S)$ is precisely $|S\cup(-S)|$~---~in particular, the degree is odd only if~$n$ is even and~$S$ contains~$n/2$~---~and that $\Gamma_n(S)$ is connected if and only if~$S\cup\{n\}$ is relatively prime. For example, \fref{fig:circulant} represents all connected circulant graphs on at most~$8$ vertices.

\begin{figure}[htb]
	\centerline{\includegraphics[width=\textwidth]{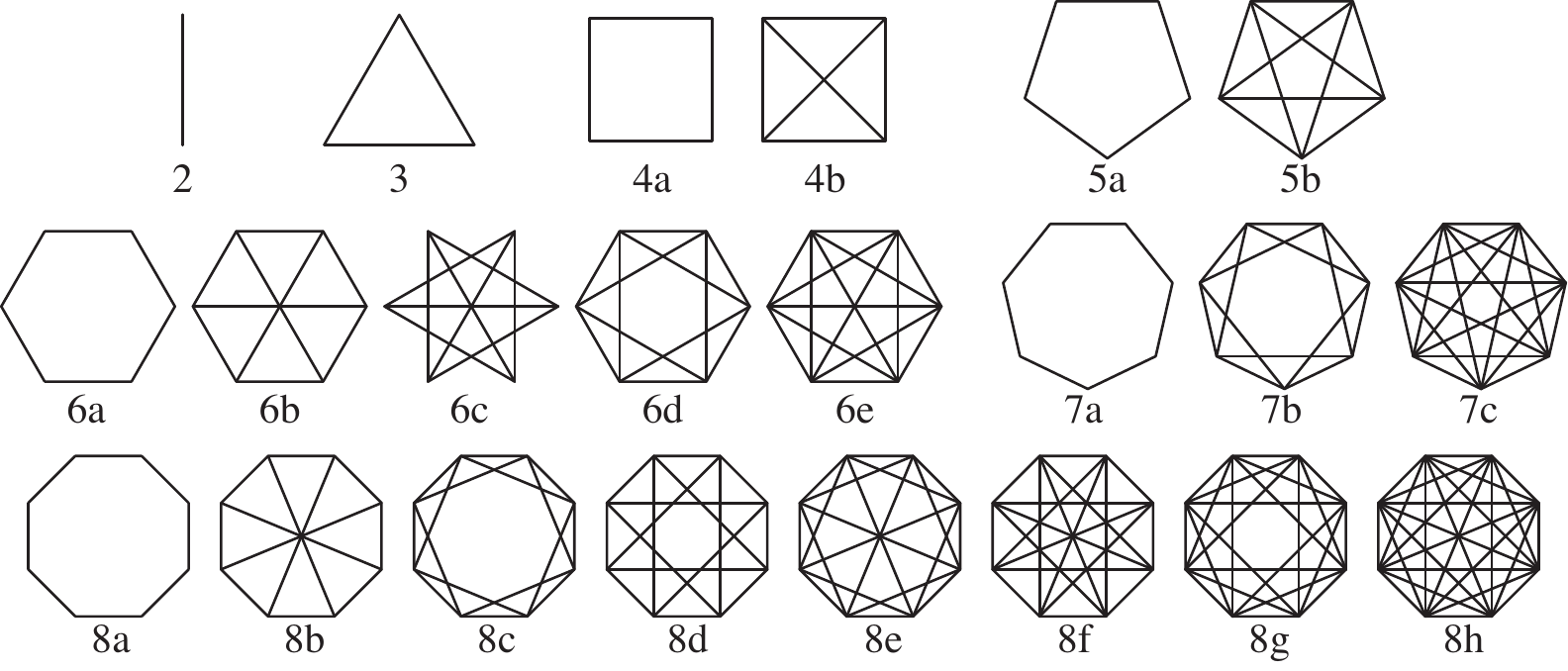}}
	\caption{Connected circulant graphs with at most~$8$ vertices.}
	\label{fig:circulant}
\end{figure}

Using Proposition~\ref{prop:necessaryconditions} we can determine the polytopality of various circulant graphs:
\begin{description}
\item[Degree~$2$] A connected circulant graph of degree~$2$ is a cycle, and thus the graph of a polygon.
\item[Degree~$3$] Up to isomorphism, the only connected circulant graphs of degree~$3$ are~$\Gamma_{2m}(1,m)$ and~$\Gamma_{4m+2}(2,2m+1)$. When~$m\ge 3$, the first one is not planar, and thus not polytopal. The second one is the graph of a prism over a \gon{(2m+1)}.
\item[Degree~$4$] As soon as we reach degree~$4$, we cannot provide a complete description of polytopal circulant graphs, but we can discuss special cases, namely the circulant graphs~$\Gamma_n(1,s)$ for~$s\in\{2,3,4\}$:
\begin{enumerate}[(a)]
\item $s=2$: For any~$m\ge 2$, the graph~$\Gamma_{2m}(1,2)$ is the graph of an antiprism over an \gon{m}. In contrast, for any~$m\ge 3$, the graph~$\Gamma_{2m+1}(1,2)$ is not polytopal: it is not planar and does not satisfy the principal subdivision property for dimension~$4$.
\item $s\in\{3,4\}$: For any~$n\ge7$, the graph~$\Gamma_n(1,3)$ is non-polytopal. Indeed, the \cycle{4}s induced by the vertices~$\{1,2,3,4\}$ and~$\{2,3,4,5\}$ should define \face{2}s of any realization (because of Theorem~\ref{theo:whitney} in dimension~$3$ and of Proposition~\ref{prop:cycles}, below, in dimension~$4$), but they intersect improperly. Similarly, for any~$n\ge9$, the graph~$\Gamma_n(1,4)$ is not polytopal.
\end{enumerate}
\item[Degree~$n-2$] The graph~$\Gamma_{2m}(1,2,\dots,m-1)$ is the only circulant graph with two vertices more than its degree. It is not planar when~$m\ge 4$ and it is not \poly{(2m-2)}topal since it does not satisfy the principal subdivision property in this dimension. However, it is always the graph of the \dimensional{m} cross-polytope, and when~$m$ is even, it is also the graph of the join of two \dimensional{(m/2)} cross-polytopes.
\item[Degree~$n-1$] The complete graph on~$n$ vertices is the graph of any neighborly polytope, and its polytopality range is~$\{4,\dots,n-1\}$ (as soon as~${n\ge 5}$).
\end{description}

\svs
The sporadic cases developed above are sufficient to determine the polytopality range of all circulant graphs on at most~$8$ vertices, except the graphs~8e and~8f of \fref{fig:circulant} that we treat separately now. None of them can be \poly{3}topal since they are not planar. We prove that they are not \poly{4}topal by discussing what could be the \face{3}s of a possible realization:
\begin{itemize}
\item We start with the graph~$\Gamma_8(1,3,4)$ represented in \fref{fig:circulant}(8f). Consider any subgraph of~$\Gamma_8(1,3,4)$ induced by~$6$ vertices. If the distance between the two missing vertices is odd (resp.~even), then the subgraph is not planar (resp.~not \connected{3}). Consequently, any subgraph of~$\Gamma_8(1,3,4)$ induced by~$7$ vertices is not planar, while any subgraph of~$\Gamma_8(1,3,4)$ induced by~$5$ vertices is not \connected{3}. Thus, the only possible \face{3}s are tetrahedra, but~$\Gamma_8(1,3,4)$ contains only~$4$ induced~$K_4$. Thus,~$\Gamma_8(1,3,4)$ is not polytopal.
\item The case of~$\Gamma_8(1,2,4)$ is more involved. Up to rotation, its only \poly{3}topal induced subgraphs are represented in \fref{fig:C8124}. Assume that the subgraph induced by~$\{0,1,2,3,4,5\}$ defines a \face{3}~$F$ in a realization~$P$ of~$\Gamma_8(1,2,4)$. Then the triangle~$123$ is a \face{2} of~$P$ and thus it should be contained in another \face{3} of~$P$. But any \face{3} which contains~$123$ also contains either~$0$ or~$4$, and thus intersects improperly with~$F$. Consequently, the subgraph induced by~$\{0,1,2,3,4,5\}$ cannot define a \face{3} of a realization of~$\Gamma_8(1,2,4)$. For the same reason, the subgraphs induced by~$\{0,1,2,3,4,6\}$ and~$\{0,1,2,3,4\}$ cannot define \face{3}s. Assume now that the subgraph induced by $\{0,1,2,3,5,6\}$ forms a \face{3} in a realization~$P$ of~$\Gamma_8(1,2,4)$. Then the triangle~$123$ is a \face{2} of~$P$ and should be contained in another \face{3} of~$P$. The only possibility is the subgraph induced by~$\{1,2,3,4,6,7\}$ which intersects improperly~$F$. Finally, the only possible \face{3}s are the two tetrahedra induced respectively by the odd and the even vertices, and thus,~$\Gamma_8(1,2,4)$ is not polytopal.
\end{itemize}
\end{example}
We summarize these results in the following proposition:

\begin{figure}
	\centerline{\includegraphics[scale=.9]{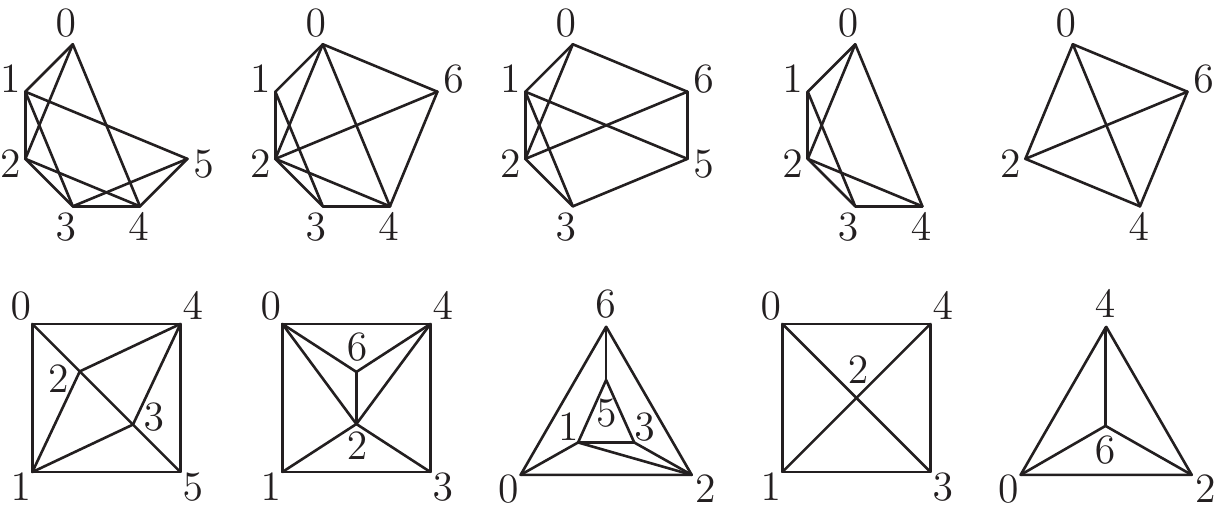}}
	\caption{The \poly{3}topal induced subgraphs of the circulant graph~$\Gamma_8(1,2,4)$. The faces of the planar drawing below each of these subgraphs are the \face{2}s of the corresponding \poly{3}tope.}
	\label{fig:C8124}
\end{figure}

\begin{proposition}
The polytopality range of all the connected circulant graphs on at most~$8$ vertices, which are depicted on \fref{fig:circulant}, is given by the following table:
\svs
\begin{center}
\begin{tabular}{|c|c|cc|cc|ccccc|}
\hline
2 & 3 & 4a & 4b & 5a & 5b & 6a & 6b & 6c & 6d & 6e \\
$\{1\}$ & $\{2\}$ & $\{2\}$ & $\{3\}$ & $\{2\}$ & $\{4\}$ & $\{2\}$ & $\emptyset$ & $\{3\}$ & $\{3\}$ & $\{4,5\}$ \\
\hline
\end{tabular}

\svs
\begin{tabular}{|ccc|cccccccc|}
\hline
7a & 7b & 7c & 8a & 8b & 8c & 8d & 8e & 8f & 8g & 8h \\
$\{2\}$ & $\emptyset$ & $\{4,5,6\}$ & $\{2\}$ & $\emptyset$ & $\{3\}$ & $\emptyset$ & $\emptyset$ & $\emptyset$ & $\{4,5\}$ & $\{4,5,6,7\}$ \\
\hline
\end{tabular}
\end{center}
\svs
\end{proposition}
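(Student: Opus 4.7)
The plan is to verify the polytopality range entry by entry, organizing the verification by the degree of the circulant graph and relying on the case-by-case analysis already developed in Section~\ref{sec:polytopalitygraphs}. The universal upper bound is that a polytope on $n$ vertices with minimum vertex-degree $\delta$ can have dimension at most $\min(\delta, n-1)$ by Proposition~\ref{prop:necessaryconditions} (PSP) together with the simplex bound; the matching lower bounds come from explicit polytopal realizations.

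The ``boundary'' columns dispose of themselves. The cycle entries 2, 3, 4a, 5a, 6a, 7a, 8a are all $2$-regular, so PSP and Theorem~\ref{theo:steinitz} pin their polytopality range to $\{2\}$ (and $\{1\}$ for the segment $n=2$). The complete-graph entries 4b, 5b, 6e, 7c, 8h are realized in every dimension from $4$ to $n-1$ by cyclic polytopes, with $d=3$ excluded by non-planarity for $n \geq 5$ and $K_4$ giving the tetrahedron. The remaining ``polytopal'' entries of the tables are realized by standard $3$-polytopes: the triangular prism $\Gamma_6(2,3)$ (6c), the octahedron $\Gamma_6(1,2)$ (6d) and the square antiprism $\Gamma_8(1,2)$ (8c); in each case the degree bound forces $d \leq 4$ and Theorem~\ref{theo:steinitz} rules out any higher dimension.

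For the six non-polytopal entries 6b ($K_{3,3}$), 7b ($\Gamma_7(1,2)$), 8b ($\Gamma_8(1,4)$), 8d ($\Gamma_8(1,3)$), 8e ($\Gamma_8(1,2,4)$) and 8f ($\Gamma_8(1,3,4)$), the arguments are already given in the preceding discussion: non-planarity rules out $d=3$, while $d=4$ fails either because PSP fails, or because pairs of induced $4$-cycles that would have to bound $2$-faces intersect improperly (Theorem~\ref{theo:whitney} and its $4$-dimensional analogue), or, for 8e and 8f, by the full enumeration of candidate $3$-faces carried out above. Higher dimensions are excluded by the degree bound.

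The remaining entry is 8g, the degree-$6$ graph $\Gamma_8(1,2,3)$. Its non-edges form the perfect matching $\{0,4\}, \{1,5\}, \{2,6\}, \{3,7\}$, so it is the graph of the $4$-cross-polytope (dimension $4$) and, by splitting these four pairs into two squares, also the graph of the join of two $4$-gons (dimension $5$). Dimension $3$ is excluded by non-planarity and dimension $7$ by the simplex bound ($K_8 \neq \Gamma_8(1,2,3)$). The main obstacle is ruling out dimension $6$: the subgraph induced by the six neighbors of any vertex is $K_{2,2,2}$, so a principal subdivision of $K_7$ at that vertex would need three internally-disjoint paths realizing the three missing edges of $K_{2,2,2}$, but only a single vertex (the antipode) is available as internal vertex outside the closed neighborhood, so PSP fails in dimension $6$.
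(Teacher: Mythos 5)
Your overall route is the same as the paper's: the proposition is proved as a summary of the preceding degree-by-degree discussion of circulant graphs, and your entries, realizations and non-polytopality arguments match that discussion. However, there is one concrete gap. For the two degree-$5$ graphs 8e~($\Gamma_8(1,2,4)$) and 8f~($\Gamma_8(1,3,4)$) you exclude $d=3$ by non-planarity, $d=4$ by the enumeration of candidate \face{3}s, and then claim that ``higher dimensions are excluded by the degree bound.'' The degree bound only gives $d\le 5$ here, so dimension~$5$ is left completely unaddressed. This is fixable in several ways: (a)~observe that the enumeration of candidate \face{3}s is dimension-independent, and that a \poly{d}tope with $d\ge 5$ has at least $\binom{d+1}{4}\ge 15$ faces of dimension~$3$, whereas only $4$ (resp.~$2$) tetrahedra are available; or (b)~note that a \regular{5} graph realized as a \poly{5}tope is necessarily simple, and both graphs contain two induced \cycle{4}s sharing three vertices (e.g.\ $(0,1,3,4)$ and $(0,4,3,7)$ in $\Gamma_8(1,2,4)$, and $(0,1,2,3)$ and $(0,3,2,5)$ in $\Gamma_8(1,3,4)$), contradicting Corollary~\ref{coro:cycles}(ii). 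Either argument closes the gap; as written, the table entries $\emptyset$ for 8e and 8f are not justified.

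A smaller imprecision: for the entries 6c, 6d and 8c you say the degree bound forces $d\le 4$ and ``Theorem~\ref{theo:steinitz} rules out any higher dimension.'' For the \regular{4} graphs 6d (octahedron) and 8c (square antiprism) the exclusion of $d=4$ does not follow from Steinitz' theorem alone; it is the combination of planarity (no $K_5$-subdivision) with the principal subdivision property, i.e.\ Remark~\ref{remark:3poly}. The fact you need is in the paper, but the citation should be to that remark. The treatment of 8g, including the explicit verification that $6$-PSP fails because the three non-edges among the neighbors of a vertex would require three internally disjoint paths through the single remaining vertex, is correct and in fact more detailed than the paper's one-line assertion.
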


\begin{example}[A graph whose polytopality range is~$\{d\}$~\cite{k-ppg-64}]\label{exm:singleton}
An interesting application of the separation property of Proposition~\ref{prop:necessaryconditions} is the possibility to construct, for any integer~$d$, a polytope whose polytopality range is exactly the singleton~$\{d\}$. The construction, proposed by Klee~\cite{k-ppg-64}, consists of stacking a vertex on all facets of the cyclic polytope~$C_d(n)$ (for example, on all facets of a simplex). The graph of the resulting polytope can be separated into~$f_{d-1}(C_d(n))$ isolated points by removing the~$n$ initial vertices, and thus is not \poly{d'}topal for~$d'<d$, by the separation property. It can not be \poly{d'}topal for $d'>d$ either, since the stacked vertices have degree~$d$ (because the cyclic polytope is simplicial). Thus, the dimension of the resulting graph is not ambiguous.
\end{example}

\begin{remark}[Polytopality range]
What subsets of $\N$ can be polytopality ranges of graphs? We know that if a polytopality range contains $1$, $2$ or $3$, then it is a singleton (Remark~\ref{remark:3poly}) and that every singleton is a polytopality range (Example~\ref{exm:singleton}), as well as any interval~$\{4,\dots,n\}$ (complete graph). We suspect that any interval~$\{m,\dots,n\}$ with $4\le m \le n$ is a polytopality range. One way of getting non-singleton polytopality ranges is to project polytopes preserving their graph. For example, \cite{mpp-psnp} obtain that for any sequence of integers $n_1,\dots,n_r$ (with $n_i\ge 2$), the product $\simplex_{n_1}\times\dots\times\simplex_{n_r}$ can be projected from dimension $\sum n_i$ until dimension $r+3$ preserving its graph (a particular example of that is the projection of the simplex until dimension~$4$). This raises the question of whether there exist graphs whose polytopality range is not an interval of $\N$.
\end{remark}

%%%%%%%%%%%

\subsection{Simple polytopes}\label{subsec:simplepolytopes}

A \poly{d}tope is \defn{simple} if its vertex figures are simplices. In other words, its facet-defining hyperplanes are in general position, so that a vertex is contained in exactly $d$~facets, and also in exactly $d$~edges (and thus the graph of a simple \poly{d}tope is \regular{d}). Surprisingly, a \regular{d} graph can be realized by at most one simple polytope:

\begin{theorem}[\cite{bm-ppi-87,k-swtsp-88}]\label{theo:kalai}
Two simple polytopes are combinatorially equivalent if and only if they have the same graph.\qed
\end{theorem}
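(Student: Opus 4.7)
The ``only if'' direction is trivial, so the content is to recover the full face lattice of a simple \poly{d}tope~$P$ from the combinatorial graph~$G=G(P)$ alone. I would follow Kalai's ``simple way''~\cite{k-swtsp-88}, whose organizing concept is that of a \emph{good} acyclic orientation of~$G$: an acyclic orientation in which every non-empty face of~$P$, viewed as an induced subgraph of~$G$, has a \emph{unique} sink. The motivating example is the orientation~$\mathcal{O}_\varphi$ induced on~$G$ by a generic linear functional~$\varphi$ (orient each edge towards the endpoint of larger $\varphi$-value); since each face of~$P$ is itself a polytope, $\varphi$ attains a unique maximum on it, so $\mathcal{O}_\varphi$~is good.

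The crux is to characterize good orientations purely in graph-theoretic terms. For this I would exploit simplicity: at each vertex~$v$, the $d$~incident edges lie in a bijection with the $2^d$~faces of~$P$ containing~$v$ (each subset of edges at~$v$ spans a unique face). Given any acyclic orientation~$\mathcal{O}$ of~$G$, let $h^\mathcal{O}_k$ be the number of vertices of in-degree~$k$ and set
\[
f^\mathcal{O} \eqdef \sum_{k=0}^d 2^k\,h^\mathcal{O}_k \;=\; \sum_{v\in V(G)} 2^{\deg^{-}_\mathcal{O}(v)}.
\]
At a vertex~$v$ of in-degree~$k$, the faces for which~$v$ is a local sink are exactly those spanned by a subset of the $k$~incoming edges, so there are precisely~$2^k$ of them. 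Summing over~$v$ counts (face, local sink) pairs, and every non-empty face of~$P$ has at least one sink under any acyclic orientation. Consequently
\[
f^\mathcal{O} \;\ge\; \bigl|\{\text{non-empty faces of }P\}\bigr|,
\]
with equality precisely when every face has a unique sink, i.e.\ when $\mathcal{O}$~is good. Since the right-hand side is the same constant for all acyclic orientations, good orientations are exactly those that \emph{minimize} the purely graph-theoretic quantity~$f^\mathcal{O}$ among acyclic orientations of~$G$.

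Finally I would recover the face lattice from $G$ together with the set of good orientations. Call a subset $S\subseteq V(G)$ \emph{initial} for an acyclic orientation~$\mathcal{O}$ if no edge of~$G$ enters~$S$ from outside. I would show that the vertex sets of \face{k}s of~$P$ are exactly those $S\subseteq V(G)$ such that the induced subgraph $G[S]$ is connected and \regular{k}, and $S$~is initial for some good orientation of~$G$. Indeed, the vertex set of a \face{k} of a simple polytope induces a \connected{} \regular{k} subgraph of~$G$, and one obtains a good orientation whose initial set is exactly~$S$ by choosing a linear functional which attains its minimum on that face; conversely, any $S$ satisfying these conditions must coincide with the initial set determined by the unique source of~$G[S]$ under the restricted good orientation, which in turn equals the \face{k} of~$P$ spanned by the $k$~outgoing edges at that source. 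Since the conditions ``connected'', ``\regular{k}'', ``good orientation'', and ``initial'' are all expressible in terms of~$G$ alone, the face lattice of~$P$ is determined by~$G$.

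The main obstacle is the inequality $f^\mathcal{O}\ge f(P)-1$ and its equality case: one must use simplicity in an essential way (to produce $2^k$ faces at a vertex of in-degree~$k$) and argue that under any acyclic orientation every face has at least one sink without double-counting. Once this is in hand, the reconstruction of faces as ``connected $k$-regular initial sets of good orientations'' is a bookkeeping exercise, and the theorem follows.
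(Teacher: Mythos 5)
The paper does not actually prove this theorem: it quotes it with a \qed, citing Blind--Mani for the result and Kalai for the ``simple way'' that you are reconstructing, so there is no internal proof to compare against. Your counting step is correct and is the heart of Kalai's argument: $f^{\mathcal O}=\sum_v 2^{\deg^-(v)}$ counts pairs (nonempty face, sink of that face), simplicity gives exactly $2^k$ faces having $v$ as a sink when $v$ has in-degree $k$, every nonempty face acquires at least one sink under any acyclic orientation, and therefore the good orientations are precisely the minimizers of the purely graph-theoretic quantity $f^{\mathcal O}$.

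The genuine gap is in the converse of your face characterization. You take the (not necessarily unique) \emph{source} of $G[S]$ and the face spanned by its \emph{outgoing} edges. Goodness constrains sinks, not sources, and a directed path starting inside an initial set may immediately leave it, so there is no reason for the face spanned at a source to be contained in $S$, let alone to equal it. The correct local vertex is a \emph{sink} $v$ of the restriction of the good orientation to $G[S]$: initiality of $S$ forces every in-edge of $v$ in $G$ to come from $S$, so $\deg^-(v)=k$ and the $k$ incoming edges span a $k$-face $F'$ of which $v$ is a sink, hence by goodness the \emph{unique} sink; every vertex of $F'$ reaches $v$ by a directed path inside $F'$, and since $S$ is initial (closed under taking in-neighbors) any directed path ending in $S$ lies entirely in $S$, whence $F'\subseteq S$; finally $k$-regularity of both $G[F']$ and $G[S]$ together with connectivity of $G[S]$ force $F'=S$. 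A second, smaller gap: in the forward direction a linear functional minimized on a face is constant on that face, hence not generic and does not orient its edges; you must perturb it, keeping all vertices of the face below all other vertices, to obtain a good orientation for which $S$ is initial. With these two repairs your sketch is exactly the cited proof of Kalai.
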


This property, conjectured by Perles, was first proved by Blind and Mani~\cite{bm-ppi-87}. Kalai~\cite{k-swtsp-88} then gave a very simple (but exponential) algorithm for reconstructing the face lattice from the graph, and Friedman~\cite{f-fsppt-09} showed that this can even be done in polynomial time.

As mentioned previously, the first step to find a polytopal realization of a graph is often to understand what the face lattice of this realization can look like. Theorem~\ref{theo:kalai} ensures that if the realization is simple, there is only one choice. This motivates us to  temporarily restrict the study of realization of regular graphs to simple polytopes:

\begin{definition}
A graph is \defn{simply \poly{d}topal} if it is the graph of a simple \poly{d}tope.
\end{definition}

We can exploit properties of simple polytopes to obtain results on the simple polytopality of graphs. For us, the key property turns out to be that any \tuple{k} of edges incident to a vertex of a simple polytope is contained in a \face{k}. For example, this implies the following result:

\begin{proposition}\label{prop:cycles}
All induced cycles of length $3$, $4$ and $5$ in the graph of a simple \poly{d}tope~$P$ are graphs of \face{2}s of~$P$.
\end{proposition}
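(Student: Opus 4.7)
The plan rests on two basic facts. Fact~(i), valid for any polytope $P$: if $e$ is an edge of $P$ whose endpoints both lie in a face $F$ of $P$, then $e$ is itself an edge of $F$, since the supporting hyperplane cutting out $F$ contains both endpoints of $e$, hence contains $e$ by convexity, placing $e\subseteq F$; and a face of $P$ contained in $F$ is automatically a face of $F$. Fact~(ii), from simplicity and mentioned just before the proposition, is that any two edges at a common vertex lie in a unique 2-face. Label the induced cycle $C=v_0v_1\cdots v_{\ell-1}$, and let $F$ be the 2-face spanned by $v_0v_1$ and $v_0v_{\ell-1}$. Fact~(i) implies that the subgraph of $G(P)$ induced on $V(F)$ is exactly the cycle $\partial F$, so it suffices to prove $V(C)\subseteq V(F)$: once this holds, at every $v_i$ both incident $C$-edges are edges of $\partial F$ by fact~(i), so $\partial F$ uses precisely these two edges at each $v_i$, leaving no room for extra vertices and forcing $\partial F=C$.

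For $\ell=3$, the edge $v_1v_2$ of $C$ has both endpoints in $F$ and so is an edge of $\partial F$ by fact~(i); combined with $v_1,v_0,v_2$ consecutive on $\partial F$, this forces $\partial F$ to be the triangle $C$. For $\ell=4$, let $F'$ be the 2-face spanned by $v_2v_1$ and $v_2v_3$ at the vertex $v_2$ opposite $v_0$. Both $F$ and $F'$ contain $v_1$ and $v_3$, which are non-adjacent in $G(P)$ since $C$ is induced; hence $F\cap F'$ is a face of $P$ containing two non-adjacent vertices, so its dimension is at least~$2$. Being a subface of the 2-face $F$, it equals $F$, and symmetrically it equals $F'$; so $F=F'$ contains $v_2$ and $V(C)\subseteq V(F)$.

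The main obstacle is the case $\ell=5$: no pair of 2-faces spanned by consecutive $C$-edges obviously shares two non-adjacent vertices of $C$. My plan is a chase through the 2-faces $F_i$ spanned by the two $C$-edges at $v_i$ (with $F=F_0$). If $F_0=F_1$ (or symmetrically $F_0=F_4$), then this single 2-face contains four consecutive $C$-vertices, and intersecting with $F_3$ (resp.\ $F_2$) yields a face with two non-adjacent common vertices, forcing equality and $V(C)\subseteq V(F_0)$ as in the $\ell=4$ argument. Otherwise $F_0\neq F_1$ and $F_0\neq F_4$, so at $v_1$ the three edges $v_1v_0$, $v_1v_2$, and $v_1y$ (where $v_1y$ is $F_0$'s second edge at $v_1$, necessarily with $y\notin V(C)$) span a 3-face $G$ by fact~(ii). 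Inside the simple 3-polytope $G$, the 2-faces $F_0$, $F_1$, and the 2-face spanned by $v_1v_2$ and $v_1y$ appear as facets, and fact~(i) forces further $C$-edges such as $v_0v_4$, $v_2v_3$, and $v_3v_4$ into $G$ as soon as their endpoints turn up. Analyzing this 3-face structure yields a coincidence of two of the $F_i$ (or of their analogues inside $G$), which again gives $V(C)\subseteq V(F_j)$ for some $j$. The delicate step, and the crux of the proof in the 5-cycle case, is extracting this last coincidence cleanly from the 3-face analysis.
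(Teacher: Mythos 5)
Your cases $\ell=3$ and $\ell=4$ are correct and match the paper's argument in substance (the paper phrases the $4$-cycle step as ``$C_a$ and $C_c$ would intersect improperly in $b$ and $d$''; your version via the face $F\cap F'$ containing two non-adjacent vertices is the same idea). The problem is the case $\ell=5$, which is exactly where the real work lies, and your proposal does not actually prove it: you end by announcing that ``analyzing this $3$-face structure yields a coincidence of two of the $F_i$'' and explicitly defer ``the delicate step.'' That deferred step is the entire content of the $5$-cycle case, so as written there is a genuine gap. There is also a concrete flaw in the setup: your $3$-face $G$ is spanned by the edges $v_1v_0$, $v_1v_2$, $v_1y$ with $y\notin V(C)$, so it contains $v_0,v_1,v_2,y$ --- but nothing forces $v_3$ or $v_4$ into $G$, so the claim that fact~(i) ``forces further $C$-edges such as $v_2v_3$ and $v_3v_4$ into $G$ as soon as their endpoints turn up'' has no starting point. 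The natural $3$-face to take is one spanned by \emph{three consecutive cycle edges} (two adjacent edges give a $2$-face, which together with a third adjacent edge gives a $3$-face by simplicity); that already captures four consecutive vertices of $C$, and the fifth vertex $w$ is then forced in because the $2$-face spanned by the two $C$-edges at $w$ would otherwise meet the $3$-face in two non-adjacent vertices, hence improperly.

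Even after all five vertices are placed in a common $3$-face $F$, you still need an argument inside the simple $3$-polytope $F$, and hunting for a coincidence among the $F_i$ is not obviously the right mechanism. The paper finishes differently: by Whitney's theorem (Theorem~\ref{theo:whitney}), an induced $5$-cycle in the graph of a $3$-polytope that is not a $2$-face must separate the graph into two nonempty parts; $3$-connectivity gives each part at least three edges to the cycle, and distributing these six edge-endpoints over five cycle vertices forces some vertex to have degree at least $4$, contradicting simplicity. If you want to avoid citing Whitney, you would have to supply a genuine substitute for this counting argument; your current sketch does not contain one.
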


\begin{proof}
For \cycle{3}s, the result is immediate: any two adjacent edges of a \cycle{3} induce a \face{2}, which must be a triangle because the graph is induced.

Next, let~$\{a,b,c,d\}$ be consecutive vertices of a \cycle{4} in the graph of a simple polytope~$P$. Any pair of edges emanating from a vertex lies in a \face{2} of~$P$. Let~$C_a$ be the \face{2} of~$P$ that contains the edges~$\conv\{a,b\}$ and~$\conv\{a,d\}$. Similarly, let~$C_c$ be the \face{2} of~$P$ that contains~$\conv\{b,c\}$ and~$\conv\{c,d\}$. If~$C_a$ and~$C_c$ were distinct, they would intersect improperly, at least in the two vertices~$b$~and~$d$. Thus,~$C_a=C_c=\conv\{a,b,c,d\}$ is a \face{2} of~$P$.

The case of \cycle{5}s is a little more involved. We first show it for \poly{3}topes. If a \cycle{5}~$C$ in the graph~$G$ of a simple \poly{3}tope does not define a \face{2}, it separates~$G$ into two nonempty subgraphs~$A$~and~$B$ (Theorem~\ref{theo:whitney}). Since~$G$~is \connected{3}, both~$A$~and~$B$ are connected to~$C$ by at least three edges. But the endpoints of these six edges must be distributed among the five vertices of~$C$, so one vertex of~$C$ receives two additional edges, and this contradicts simplicity.

For the general case, we show that any \cycle{5}~$C$ in a simple polytope is contained in some \face{3}, and apply the previous argument (a face of a simple polytope is simple). First observe that any three consecutive edges in the graph of a simple polytope lie in a common \face{3}. This is true because any two adjacent edges define a \face{2}, and a \face{2} together with another adjacent edge defines a \face{3}. Thus, four of the vertices of~$C$ are already contained in a \face{3}~$F$. If the fifth vertex~$w$ of~$C$ lies outside~$F$, then the \face{2} defined by the two edges of~$C$ incident to~$w$ intersects improperly with~$F$.
\end{proof}

\begin{remark}
Observe that there is an induced \cycle{6} in the graph of the cube (resp.~an induced \cycle{p} in the graph of a double pyramid over a \cycle{p}, for~$p\ge 3$) which is not the graph of a \face{2}. It is also interesting to notice that contrarily to dimension~$3$ (Theorem~\ref{theo:whitney}), the \face{2}s of a \poly{4}tope are not characterized by a separation property: a pyramid over a cube has a non-separating induced \cycle{6} which does not define a \face{2}.
\end{remark}

\begin{corollary}\label{coro:cycles}
A simply polytopal graph cannot:
\begin{enumerate}[(i)]
\item be separated by an induced cycle of length~$3$, $4$ or~$5$.
\item contain two induced cycles of length $4$ or~$5$ which share~$3$ vertices.
\item contain an induced~$K_{2,3}$ or an induced Petersen graph. \qed
\end{enumerate}
\end{corollary}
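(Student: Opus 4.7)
The plan is to use Proposition~\ref{prop:cycles} as the central tool: in the graph of a simple polytope, every induced cycle of length $3$, $4$, or $5$ must be the graph of a $2$-face. From this, all three parts follow from facial intersection and connectivity properties.

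For part (i), let $C$ be an induced $3$-, $4$-, or $5$-cycle in the graph $G$ of a simple \poly{d}tope $P$, and let $F$ be the $2$-face with $G(F) = C$ provided by Proposition~\ref{prop:cycles}. For $d=3$, Whitney's theorem (Theorem~\ref{theo:whitney}) directly says that a $2$-face is a non-separating induced cycle. For $d \geq 4$, I would show that $G \setminus V(F)$ is connected. Balinski's theorem (Proposition~\ref{prop:necessaryconditions}(1)) gives $d$-connectivity, which handles all cases with $|V(F)| \leq d-1$ (in particular, all cases with $d \geq 6$). The residual cases are $d \in \{4,5\}$ with $|V(F)| \geq d$. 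Here one exploits the local structure at $F$: since $P$ is simple, $F$ is contained in exactly $d-2$ facets, and at each vertex $v \in V(F)$ there are $d-2$ edges leaving $F$. Using these lateral edges and the graphs of facets not containing $F$, any two vertices outside $V(F)$ can be connected by a path avoiding $V(F)$. Making this last step rigorous, especially in dimension $4$ with a pentagonal $2$-face, is the main obstacle.

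For part (ii), suppose $C_1 \neq C_2$ are two induced cycles of length $4$ or $5$ sharing three common vertices. By Proposition~\ref{prop:cycles}, each $C_i$ is the graph of a $2$-face $F_i$ of $P$. The intersection of two distinct faces of a polytope is a face of strictly smaller dimension, so $\dim(F_1 \cap F_2) \leq 1$, giving $|V(F_1) \cap V(F_2)| \leq 2$. This contradicts the hypothesis that $C_1$ and $C_2$ share three vertices.

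For part (iii), I would reduce each case to (ii) by exhibiting inside $K_{2,3}$ and the Petersen graph two induced short cycles that share three vertices; since induced subgraphs preserve the induced nature of these cycles in any ambient graph, the corollary then follows. In $K_{2,3}$ with parts $\{u_1,u_2\}$ and $\{v_1,v_2,v_3\}$, the three $4$-cycles $(u_1, v_i, u_2, v_j)$ for $\{i,j\} \subset \{1,2,3\}$ are induced and pairwise share the three vertices $\{u_1, u_2, v_k\}$, where $v_k$ is their common $v$-vertex. In the Petersen graph, drawn with outer $5$-cycle $0$-$1$-$2$-$3$-$4$, spokes $i$-$i'$, and inner pentagram edges $i'$-$(i{+}2)'$, both the outer $5$-cycle $(0,1,2,3,4)$ and the $5$-cycle $(0,1,2,2',0')$ (using two outer edges, a spoke, an inner edge, and a spoke) are induced by the girth-$5$ property, and they share the three vertices $\{0,1,2\}$. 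Thus (ii) applies in both cases, completing the proof.
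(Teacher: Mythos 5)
Your parts (ii) and (iii) are correct and follow essentially the paper's route: two distinct $2$-faces of a polytope intersect in a common face of dimension at most $1$, hence share at most two vertices, and both $K_{2,3}$ and the Petersen graph contain two induced short cycles sharing three vertices (your explicit cycles check out; in the Petersen graph every $5$-cycle is automatically induced since the girth is $5$).

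Part (i), however, has a genuine gap, and you flag it yourself. Balinski's theorem only handles the cases where the cycle has at most $d-1$ vertices, which leaves open precisely $d=4$ with a $4$- or $5$-cycle and $d=5$ with a $5$-cycle, and your sketch of a ``local structure at $F$'' argument is not a proof. The missing ingredient is the standard fact, which the paper invokes implicitly when it says that $2$-faces are non-separating cycles: for \emph{any} proper face $F$ of a polytope $P$, the subgraph of $\gr(P)$ induced on the vertices outside $F$ is connected. One clean way to see this: write $F$ as the face on which some linear functional $f$ attains its minimum over $P$. Any vertex $v\notin F$ that does not maximize $f$ has a neighbor with strictly larger $f$-value, so $v$ is joined by an $f$-increasing path to the face $F_{\max}$ where $f$ is maximal; since $f$ strictly exceeds its minimum all along this path, the path never meets $F$, and $F_{\max}$ is a polytope disjoint from $F$ whose graph is connected. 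This single argument works in every dimension and for faces of every size, so it replaces your entire case analysis and closes the gap.
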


\begin{proof}
Parts (i) and (ii) are immediate consequences of Proposition~\ref{prop:cycles} since the \face{2}s of a polytope are non-separating cycles and pairwise intersect in at most one edge. Part~(iii) arises from Part~(ii) since $K_{2,3}$ (resp. the Petersen graph) contains two induced \cycle{4}s (resp. two \cycle{5}s) which share~$3$ vertices.
\end{proof}

\begin{example}[An infinite family of non-polytopal graphs for non-trivial reasons~\cite{gn-pc-09}]\label{exm:marc+antonio}
Consider the family of graphs suggested in \fref{fig:marc+antonio}. The $n$th graph of this family is the graph~$G_n$ whose vertex set is~$\Z_{2n+3}\times\Z_2$ and where the vertex~$(x,y)$ is related with the vertices~$(x+y+1,y)$, $(x+y,y+1)$, $(x-y-1,y)$ and~$(x+y-1,y+1)$.

\begin{figure}[h]
	\centerline{\includegraphics[scale=.9]{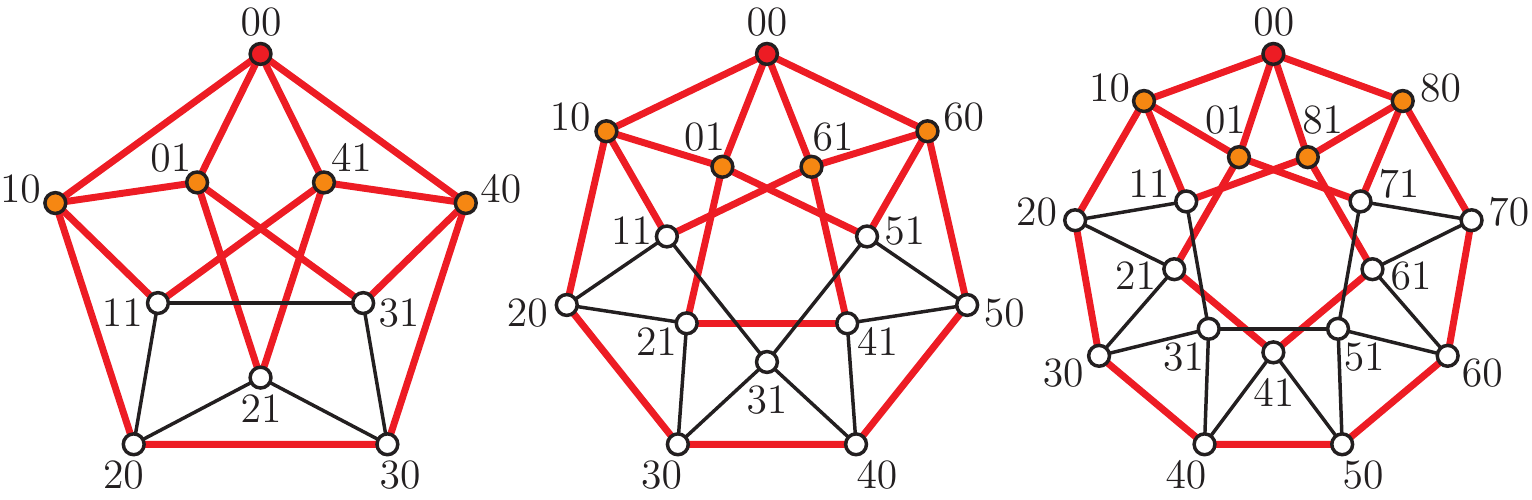}}
	\caption{An infinite family of non-polytopal graphs (for non-trivial reasons). The vertex~$00$ is the principal vertex of a principal subdivision of~$K_5$, whose edges are colored in red.}
	\label{fig:marc+antonio}
\end{figure}

Observe first that the graphs of this family satisfy all necessary conditions of Proposition~\ref{prop:necessaryconditions}:
\begin{enumerate}
\item They are \connected{4}: when we remove~$3$ vertices, either the external cycle~$\set{i0}{i\in\Z_{2n+1}}$ or the internal cycle~$\set{i1}{i\in\Z_{2n+1}}$ remains a path, to which all the vertices are connected. 
\item They satisfy the principal subdivision property for dimension~$4$: the edges of a principal subdivision of~$K_5$ with principal vertex~$00$ are colored in \fref{fig:marc+antonio}.
\item They satisfy the separation property: the cyclic \poly{4}tope on~$m$ vertices has~$\frac{m(m-3)}{2}$ facets, while removing~$m$ vertices from~$G_n$ cannot create more than~$m$ connected components.
\end{enumerate}

Consider the first graph~$G_1$ of this family (on the left in \fref{fig:marc+antonio}). Since the \cycle{5}s induced by~$\{00,10,20,30,40\}$ and~$\{00,10,20,21,41\}$ share two edges, $G_1$ is not polytopal (because of Theorem~\ref{theo:whitney} in dimension~$3$ and of Proposition~\ref{prop:cycles} in dimension~$4$). In fact, Proposition~\ref{prop:cycles} even excludes all graphs of the family:

\begin{lemma}
None of the graphs of the infinite family suggested in \fref{fig:marc+antonio} is polytopal.
\end{lemma}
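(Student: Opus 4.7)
The plan is to rule out each possible dimension of polytopality for $G_n$, the main difficulty being the four-dimensional case. Since $G_n$ is $4$-regular, Balinski's Theorem eliminates all $d\ge 5$, and $d\le 2$ is impossible because $G_n$ has triangles and is not a single cycle. It therefore remains to exclude $d=3$ and $d=4$.

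For $d=3$, I would produce three distinct induced non-separating cycles through a common edge, contradicting the fact that in a $3$-polytope each edge lies in exactly two $2$-faces. Let $T_a\eqdef\{(a,0),(a+1,0),(a,1)\}$, let $C_a\eqdef\{(a,0),(a+1,0),(a+1,1),(a-1,1)\}$, and let $O$ be the outer $(2n{+}3)$-cycle on the vertices $\{(x,0):x\in\Z_{2n+3}\}$. Each of $T_a$, $C_a$ and $O$ is induced (a routine check using that the outer step is $\pm 1$ while the inner step is $\pm 2$), and none separates $G_n$: after removing $T_a$ or $C_a$ the complement stays connected via the remaining outer path, the inner cycle minus one or two vertices, and the many crossings; removing $O$ leaves the inner cycle intact. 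By Theorem~\ref{theo:whitney}, all three would then be graphs of $2$-faces, but they all share the edge $\{(a,0),(a+1,0)\}$, a contradiction.

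For $d=4$, the $4$-regularity of $G_n$ forces any realization to be simple, and Proposition~\ref{prop:cycles} then makes every $T_a$ and every $C_a$ a $2$-face. A short additional argument shows that $O$ is also a $2$-face: at each vertex $(x,0)$, the pair of two outer edges meeting there determines one of the six $2$-faces through $(x,0)$, and since this $2$-face is distinct from $T_x$ and $C_x$, it is the unique ``third'' $2$-face through the edge $\{(x,0),(x+1,0)\}$; it therefore coincides with the analogous $2$-face at $(x+1,0)$, and iterating around the outer cycle identifies this common $2$-face with $O$.

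The main obstacle, and the heart of the argument, is to derive a contradiction from the facets containing $O$. As a $2$-face of a simple $4$-polytope, $O$ lies in exactly two facets; let $F$ be one of them. Then $F$ is a simple $3$-polytope, so at each outer vertex $(x,0)$ it uses both outer edges of $O$ together with exactly one of the two inner edges at $(x,0)$: either the one to $(x,1)$ (call this Case~A) or the one to $(x-1,1)$ (Case~B). In Case~A, the three $2$-faces of $F$ at $(x,0)$ are $O$, $T_x$ and $C_{x-1}$, so $T_x\subset F$; tracking $T_x$ to the next outer vertex $(x+1,0)$ shows $F$ must use the edge from $(x+1,0)$ to $(x,1)$ there, which is precisely Case~B at $(x+1,0)$. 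Symmetrically, Case~B at $(x,0)$ forces Case~A at $(x+1,0)$. The Cases therefore alternate as $(x,0)$ moves around~$O$, but $O$ has odd length $2n+3$, so no consistent global choice exists. Hence no facet $F$ containing $O$ can exist, contradicting the fact that $O$ must lie in two facets, and $G_n$ is not polytopal in dimension $4$.
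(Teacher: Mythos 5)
Your proof is correct and follows essentially the same route as the paper for the crucial four-dimensional case: simplicity forces every triangle $T_x$ and square $C_x$ to be a $2$-face, hence the outer cycle is the third $2$-face through each outer edge, and the forced alternation of triangles and squares inside any facet containing the outer cycle contradicts its odd length $2n+3$. The only (immaterial) difference is in dimension~$3$, where the paper simply observes that $G_n$ contains a subdivision of $K_5$ and is therefore non-planar, instead of your argument via Whitney's theorem and three induced non-separating cycles through a common edge.
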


\begin{proof}
Since they contain a subdivision of~$K_5$, they are not \poly{3}topal.

Denote by~$e_i$ the edge of the external cycle from vertex~$i0$ to vertex~$(i+1)0$. If the graph~$G_n$ were \poly{4}topal, then all $3$- and \cycle{4}s would define \face{2}s. Now consider two consecutive angles $e_1e_2$ and $e_2e_3$ of the external cycle. Each of them defines a \face{2} by simplicity. These two \face{2}s must in fact coincide, since $e_2$~is already contained in a square and a triangular \face{2}, none of which contain the angles~$e_1e_2$ and~$e_2e_3$. By iterating this argument, we obtain that the entire external cycle forms a \face{2}.

Consider a \face{3}~$F$ containing the external cycle. The edge $e_1$ must also be contained in either the adjacent square or the adjacent triangle; without loss of generality, let it be the square. Then the triangle adjacent to the next edge~$e_2$ must also be in~$F$ (because~$F$ already contains two of its edges). By the same reasoning, the square adjacent to~$e_3$ is also contained in~$F$, and iterating this argument (and using that $n$ is odd) shows that in fact~$F$~contains all the squares and triangles of~$G_n$, contradiction.
\end{proof}
\end{example}

%%%%%%%%%%%

\subsection{Truncation and star-clique operation}\label{subsec:starclique}

We consider the polytope~$\tau_v(P)$ obtained by cutting off a single vertex~$v$ in a polytope~$P$. The set of inequalities defining~$\tau_v(P)$ is that of~$P$ together with a new inequality satisfied strictly by all the vertices of~$P$ except~$v$. The faces of~$\tau_v(P)$ are:
\begin{enumerate}[(i)]
\item all the faces of~$P$ which do not contain~$v$;
\item the truncations~$\tau_v(F)$ of all faces~$F$ of~$P$ containing~$v$; and
\item the vertex figure of~$v$ in~$P$ together with all its faces.
\end{enumerate}
In particular, if~$v$ is a simple vertex in~$P$, then the truncation of~$v$ in~$P$ replaces~$v$ by a simplex. On the graph of~$P$, it translates into the following transformation:

\begin{definition}
Let~$G$ be a graph and~$v$ be a vertex of degree~$d$ of~$G$. The \defn{star-clique operation} (at~$v$) replaces vertex~$v$ by a \clique{d}~$K$, and assigns one edge incident to~$v$ to each vertex of~$K$. The resulting graph~$\sigma_v(G)$ has~$d-1$ more vertices and~${d \choose 2}$ more edges.
\end{definition}

\begin{remark}
In degree~$3$, star-clique operations, usually called $\Delta Y$-transforma\-tions, are used to prove Steinitz' Theorem~\ref{theo:steinitz}. The argument is that any \connected{3} and planar graph can be reduced to the complete graph~$K_4$ by a sequence of such transformations, which preserve polytopality (see~\cite{z-lp-95} for details).
\end{remark}

%\begin{example}
%The truncation of a vertex in a \simp{d} creates (combinatorially) a prism over a \simp{(d-1)}. Its graph is the product~$K_2\times K_d$ and thus can be realized as the product of a segment by any neighborly polytope~$P$ on~$d$ vertices. In fact, it is also \poly{4}topal.
%Observe that the complete graph is also realized by any neighborly polytope but that the result of the truncation of a vertex in such a polytope is not always a star-clique operation on the graph (it is the case only when the vertex figure of the cut vertex in the realization is $2$-neighborly).
%\end{example}

\begin{proposition}\label{prop:starclique}
Let~$v$ be a vertex of degree~$d$ in a graph~$G$. Then~$\sigma_v(G)$ is \poly{d}topal if and only if~$G$ is \poly{d}topal.
\end{proposition}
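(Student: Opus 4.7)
My plan is to treat the two implications separately. Forward: truncate a simple vertex. Backward: recognize the new $d$-clique as a simplex facet in any realization, then un-truncate.

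For the forward direction, suppose $G$ is the graph of a $d$-polytope~$P$. Since~$v$ has degree~$d$ in~$G$, which is the minimum possible degree in a $d$-polytope, $v$ is automatically a simple vertex of~$P$. Applying the geometric truncation $\tau_v(P)$ recalled at the beginning of this subsection, the vertex~$v$ is replaced by its vertex figure, a $(d-1)$-simplex; reading off the face description of $\tau_v(P)$ gives directly that its graph is $\sigma_v(G)$.

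For the reverse direction, suppose $\sigma_v(G)$ is the graph of a $d$-polytope~$Q$, and let $K$ denote the $d$-clique in $\sigma_v(G)$ that replaces~$v$. The key step is to show that $\conv(K)$ is a simplex facet of~$Q$. Each $w\in K$ has degree~$d$ in $\sigma_v(G)$, hence is simple in~$Q$, so its vertex figure is a $(d-1)$-simplex and any $k$ edges at~$w$ span a $k$-face of~$Q$. In particular, the $d-1$ edges from~$w$ to $K\ssm\{w\}$ span a facet~$F_w$ of~$Q$; since both endpoints of every edge of~$K$ lie in~$F_w$, the whole clique~$K$ is contained in~$F_w$. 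Running the vertex-figure argument again at any other $w'\in K$ shows that $w'$ has exactly $d-1$ edges in~$F_w$, which must be the edges to $K\ssm\{w'\}$ already present. Thus every vertex of~$K$ has all its neighbors in~$K$ within the graph of~$F_w$, and Balinski's theorem applied to the $(d-1)$-polytope~$F_w$ forces the vertex set of~$F_w$ to be exactly~$K$. A $(d-1)$-polytope with only $d$ vertices is a $(d-1)$-simplex, so $F_w = \conv(K)$ is the desired simplex facet, independent of the choice of~$w$.

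Having identified a simplex facet $F = \conv(K)$ of~$Q$, I would construct~$P$ by un-truncating~$F$ to a single vertex~$v$: drop the inequality defining~$F$ and extend the $d$ facet-hyperplanes of~$Q$ adjacent to~$F$ until they meet in a common apex~$v$ on the opposite side of~$F$. After applying a suitable projective transformation adjusting the position of~$F$'s hyperplane, these $d$ hyperplanes can always be made concurrent, producing a $d$-polytope~$P$ whose vertex set differs from that of~$Q$ by replacing~$K$ with~$v$ and whose graph is~$G$. The hard part will be precisely this realizability step: the combinatorial un-truncation is formal, but one needs to guarantee it can be realized by an affine polytope. A cleaner alternative is to dualize, since un-truncating a simplex facet of~$Q$ is dual to stacking an apex over the simple vertex of~$Q^*$ corresponding to~$F$, an operation which is always realizable.
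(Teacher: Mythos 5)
Your proof follows essentially the same route as the paper's: truncate the (automatically simple) degree-$d$ vertex for one direction, and for the converse use simplicity of the $d$ clique vertices to identify $\conv(K)$ as a simplex facet of~$Q$, then drop its defining inequality after a projective transformation brings the $d$ adjacent facet hyperplanes to a common finite point behind it. Your connectivity argument showing that $F_w$ contains no vertex outside $K$ is just a more explicit version of the paper's one-line appeal to simplicity, and is fine. One caveat: the closing dual remark is backwards --- un-truncating the simplex facet $F$ of $Q$ corresponds to \emph{deleting} the simple vertex of $Q^*$ dual to $F$ (undoing a stacking), not to stacking an apex over it, and that deletion is not automatically realizable with the desired face structure either, so it does not actually let you bypass the projective-transformation step.
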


\begin{proof}
If a \poly{d}tope~$P$ realizes~$G$, then the truncation~$\tau_v(P)$ realizes~$\sigma_v(G)$. 

For the other direction, consider a \poly{d}tope~$Q$ which realizes~$\sigma_v(G)$. We first show that the \clique{d} replacing~$v$ forms a facet of~$Q$. Let its vertices be denoted $v_1,\dots,v_d$. Observe that all these vertices have degree $d$ in $\sigma_v(G)$. That is, $Q$ is ``simple at those vertices''. This implies that for every subset $S$ of neighbors of, say, $v_1$, there is a face of dimension $|S|$ containing $S$ and $v_1$. In particular, there is a facet $F$ of $Q$ containing $v_1,\dots,v_d$. By simplicity of all these vertices, $F$ cannot contain any other vertex.

Up to a projective transformation, we can assume that the~$d$ facets of~$Q$ adjacent to~$F$ intersect behind~$F$. Then, removing the inequality defining~$F$ from the facet description of~$Q$ creates a polytope which realizes~$G$.
\end{proof}

We can exploit Proposition~\ref{prop:starclique} to construct several families of non-polytopal graphs. We need the following lemma:

\begin{lemma}
Let~$v$ be a vertex of degree at least~$4$ in a \poly{3}topal graph~$G$. Then $\sigma_v(G)$ is not planar, and thus not \poly{3}topal.
\end{lemma}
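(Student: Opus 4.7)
My plan is to split the argument according to the degree $d$. When $d \geq 5$ the statement is immediate, since the \clique{d} that replaces $v$ in $\sigma_v(G)$ already contains $K_5$ as a subgraph, so $\sigma_v(G)$ fails planarity by Kuratowski's theorem. All the work therefore lies in the case $d=4$, which I would handle by contradiction: assume a planar embedding of $\sigma_v(G)$ on the sphere has been fixed.

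The crucial observation is that $K_4$, planarly embedded on the sphere, has exactly four triangular faces, each incident to only three of its four vertices $u_1, u_2, u_3, u_4$. Let $v_1, \dots, v_4$ denote the neighbors of $v$ in $G$, so that $u_i v_i$ is the ``tail'' edge joining $u_i$ to $v_i$ in $\sigma_v(G)$. Since no $v_i$ is a vertex of $K_4$, each $v_i$ sits in the interior of some face of the embedded $K_4$, and because the tail $u_i v_i$ emanates from $u_i$ without crossing any edge of $K_4$, that face must be incident to $u_i$. Next, I would use the $3$-connectivity of $G$ (guaranteed by Steinitz' Theorem) to conclude that $G - v$ is connected. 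Because any path in $G - v$ uses neither vertices nor edges of $K_4$, it must stay inside a single open face of the embedded $K_4$; hence all four $v_i$ must share one face $F$ of the embedded $K_4$.

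The contradiction then writes itself: $F$ is bounded by a triangle, so is incident to only three of the four vertices $u_j$, yet we have just forced $F$ to be incident to $u_i$ for each of the four indices $i \in \{1,2,3,4\}$. Non-\poly{3}topality then follows immediately from non-planarity via Steinitz' Theorem. The one step I would watch most carefully is the topological claim that an edge of $G - v$ cannot ``skirt'' a vertex or an edge of the embedded $K_4$; but this falls out of the standard definition of a planar embedding, since the vertex sets of $G - v$ and $K_4$ are disjoint and embedded edges do not cross.
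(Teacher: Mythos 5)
Your proof is correct, but it takes a genuinely different route from the paper's. The paper produces a $K_{d+1}$ minor of $\sigma_v(G)$ in one stroke: it contracts the link of $v$ in the $3$-polytope realizing $G$ (a cycle through all neighbors of $v$) to a single vertex, which is then joined to every vertex of the new $d$-clique by the tail edges, deletes all remaining edges, and concludes by Kuratowski--Wagner, uniformly in $d\ge4$. You instead split off the easy case $d\ge5$ (where the clique itself contains $K_5$) and treat $d=4$ by a direct topological argument: in any spherical embedding the $K_4$ has four triangular faces, each incident to only three of the $u_j$, while the connected subgraph $G-v$ is disjoint from the embedded $K_4$ and hence confined to a single open face, dragging all four neighbors $v_i$ (and thus all four tails) into that face and forcing it to be incident to all four $u_j$ --- a contradiction. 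Both arguments are sound; each step of yours (each $v_i$ lies in an open face incident to $u_i$, and $G-v$ is connected because $G$ is $3$-connected) checks out. The paper's version is shorter and avoids the case distinction; yours is more self-contained in that it never invokes the face structure of the realizing polytope, only that $v$ is not a cut vertex, so it in fact proves the slightly more general statement that $\sigma_v(G)$ is non-planar whenever $\deg(v)\ge4$ and $G-v$ is connected. (The minor argument can be made equally general by contracting all of $G-v$ rather than just the link cycle, but as written the paper does use the polytope.)
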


\begin{proof}
Let~$A$ denote the cycle formed by the edges of~$G$ which are not incident to~$v$, but belong to a \face{2} incident to~$v$ in the \poly{3}tope realizing~$G$ (in other words,~$E$ denotes the link of~$v$ in the \poly{3}tope realizing~$G$). Let~$B$ denote the set of all edges of~$G$ which are neither incident to~$v$, nor contained in~$A$. Then the minor of the graph~$\sigma_v(G)$ obtained by contracting all edges of~$A$ and deleting all edges of~$B$ is the complete graph~$K_{n+1}$, where $n=\deg_G(v)$.
\end{proof}

\begin{corollary}\label{coro:starclique}
Any graph obtained from a \regular{4} \poly{3}topal graph by a finite nonempty sequence of star-clique operations is non-polytopal.
\end{corollary}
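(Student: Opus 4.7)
The plan is to rule out each possible dimension of realization in turn. The crucial preliminary observation is that star-clique operations preserve regularity: if $v$ has degree $d$ in a \regular{d} graph, then each vertex of the inserted \clique{d} ends up with $d-1$ clique edges plus one external edge, while the degree of every other vertex is unchanged. Consequently, any graph $G'$ obtained from a \regular{4} graph $G_0$ by a finite sequence of star-clique operations is itself \regular{4}, and every operation in the sequence is performed at a vertex of degree~$4$. In particular, $G'$ cannot be \poly{2}topal (a \poly{2}tope is a cycle, hence not \regular{4}) nor \poly{d}topal for any $d \ge 5$ (since a \poly{d}tope has minimum degree at least~$d$). So it remains to exclude $d = 3$ and $d = 4$.

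For $d = 4$, I would apply Proposition~\ref{prop:starclique} iteratively in reverse, peeling off the inserted cliques one by one. Since each operation in the sequence is at a degree-$4$ vertex, the ``if and only if'' of the proposition propagates \poly{4}topality back through every step; so if $G'$ were \poly{4}topal then $G_0$ would also be \poly{4}topal. But $G_0$ is \poly{3}topal, and Remark~\ref{remark:3poly} states that a \poly{3}topal graph is not \poly{d}topal for any $d \neq 3$, a contradiction.

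For $d = 3$, I would argue that $G'$ is non-planar and invoke Steinitz' Theorem. The preceding lemma shows that $\sigma_{v_1}(G_0)$ is non-planar, where $v_1$ is the vertex of $G_0$ at which the first star-clique was performed (its degree in $G_0$ is~$4$, which satisfies the hypothesis of the lemma). The key point is that each subsequent star-clique operation replaces a degree-$4$ vertex by a $K_4$, so contracting those inserted $K_4$'s back to single vertices, one by one in reverse order, recovers the preceding graphs in the sequence. By transitivity of the minor relation, $\sigma_{v_1}(G_0)$ is a minor of $G'$; since planarity is minor-closed, $G'$ is non-planar, hence not \poly{3}topal.

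The main obstacle, more bookkeeping than mathematics, is checking that the inversions in the $d = 4$ case and the chain of minors in the $d = 3$ case remain coherent even when some later star-clique operations happen to be performed on vertices of previously inserted cliques. Both issues are handled by the regularity-preservation observation above: at every stage a $K_4$ whose four vertices all have degree~$4$ is available to be contracted or inverted.
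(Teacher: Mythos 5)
Your proof is correct and follows essentially the same route as the paper's: non-planarity (hence Steinitz) rules out dimension~$3$, reverse application of Proposition~\ref{prop:starclique} together with Remark~\ref{remark:3poly} rules out dimension~$4$, and \regular{4}ity rules out all other dimensions. You merely make explicit two details the paper leaves implicit --- the minor-chain argument propagating non-planarity through the later star-clique operations, and the exclusion of dimensions other than $3$ and $4$ --- but the underlying argument is identical.
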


\begin{proof}
No such graph can be \poly{3}topal since it is not planar. If the resulting graph were \poly{4}topal, Proposition~\ref{prop:starclique} would assert that the original graph was also \poly{4}topal, which would contradict Remark~\ref{remark:3poly}.
\end{proof}

\begin{remark}
This corollary fails in higher dimension: the graph obtained from a \poly{d}topal graph by a star-clique operation on a vertex of degree $\delta>d$ may still be polytopal. For example, the complete graph~$K_n$ is a \regular{(n-1)} \poly{4}topal graph, and the graph~$K_{n-1} \times K_2$ obtained by a star-clique operation on a vertex of~$K_n$ is still \poly{4}topal~\cite{mpp-psnp}. 
\end{remark}

\begin{example}[Another infinite family of non-polytopal graphs for non-trivial reasons]\label{exm:diamond}
For~$n\ge3$, consider the family of graphs suggested by \fref{fig:diamond}.

They are constructed as follows: place a regular \gon{2n}~$C_{2n}$ into the plane, centered at the origin. Draw a copy~$C_{2n}'$ of~$C_{2n}$ scaled by~$\tfrac{1}{2}$ and rotated by~$\frac{\pi}{2n}$, and lift the vertices of~$C_{2n}'$ alternately to heights~$1$ and~$-1$ into the third dimension. The graph~$\davidsstar_n$ is the graph of the convex hull of the result.

In other words, the graph~$\davidsstar_n$ is the graph of the Minkowski sum of two pyramids over an \gon{n} (the first pyramid obtained as the convex hull of the even vertices of~$C_{2n}'$ together with the point~$(0,0,1)$, and the second pyramid obtained as the convex hull of the odd vertices of~$C_{2n}'$ together with the point~$(0,0,-1)$).

\begin{figure}
	\centerline{\includegraphics[width=\textwidth]{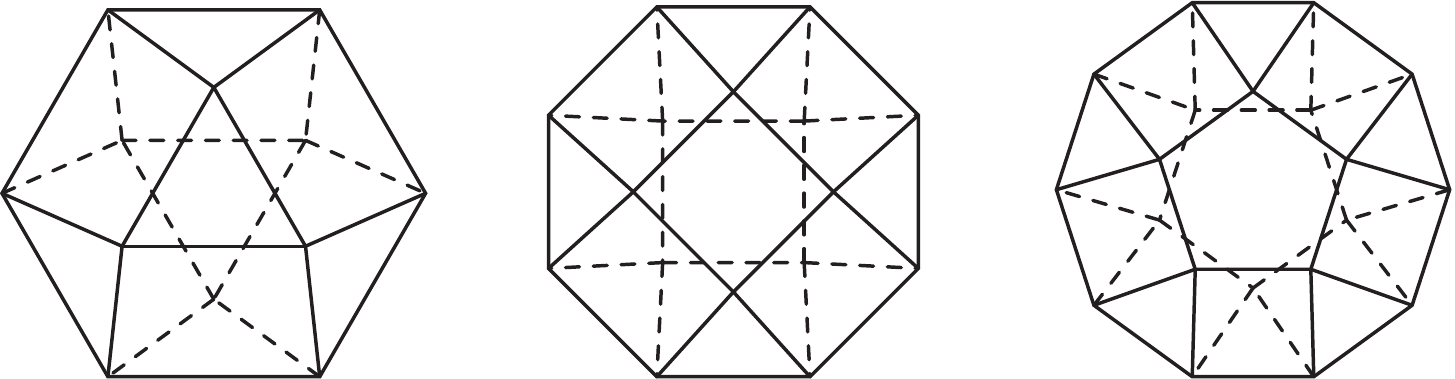}}
	\caption{The graphs $\davidsstar_n$ for $n\in\{3,4,5\}$.}
	\label{fig:diamond}
\end{figure}

Let~$\davidsstar_n^\star$~be the result of successively applying the star-clique operation to all vertices on the intermediate cycle~$C_{2n}$. Corollary~\ref{coro:starclique} ensures that~$\davidsstar_n^\star$ is not polytopal, although it satisfies all necessary conditions to be \poly{4}topal (we skip this discussion which is similar to that in Example~\ref{exm:marc+antonio}).
\end{example}

%%%%%%%%%%%%%%%%%%%%%%%%%%%%%%%

\section{Polytopality of products of graphs}\label{sec:product}

Define the \defn{Cartesian product}~$G\times H$ of two graphs~$G$~and~$H$ to be the graph whose vertex set is the product $V(G\times H) \eqdef V(G)\times V(H)$, and whose edge set is $E(G\times H) \eqdef \big(V(G)\times E(H)\big)\cup\big(E(G)\times V(H)\big)$. In other words, for~$a,c\in V(G)$ and~$b,d\in V(H)$, the vertices~$(a,b)$ and~$(c,d)$ of~$G\times H$ are adjacent if either~$a=c$ and~$\{b,d\}\in E(H)$, or~$b=d$ and~$\{a,c\}\in E(G)$. Notice that this product is usually denoted by~$G\Box H$ in graph theory. We choose to use the notation~$G\times H$ to be consistent with the Cartesian product of polytopes: if~$G$ and~$H$ are the graphs of the polytopes~$P$ and~$Q$ respectively, then the product~$G\times H$ is the graph of the product~$P\times Q$. In this section, we focus on the polytopality of products of non-polytopal graphs.

As already mentioned, the factors of a polytopal product are not necessarily polytopal: consider for example the product of a triangle by a path, or the product of a segment by two glued triangles (see \fref{fig:ctrm} and more generally Proposition~\ref{prop:subdiv}). We neutralize these elementary examples by further requiring the product~$G\times H$, or equivalently the factors~$G$ and~$H$, to be regular (the degree of a vertex~$(v,w)$ of~$G\times H$ is the sum of the degrees of the vertices~$v$ of~$G$ and~$w$ of~$H$). In this case, it is natural to investigate when such regular products can be simply polytopal. The answer is given by Theorem~\ref{theo:simplypolytopalproducts}.

\begin{figure}[htb]
	\centerline{\includegraphics[scale=.9]{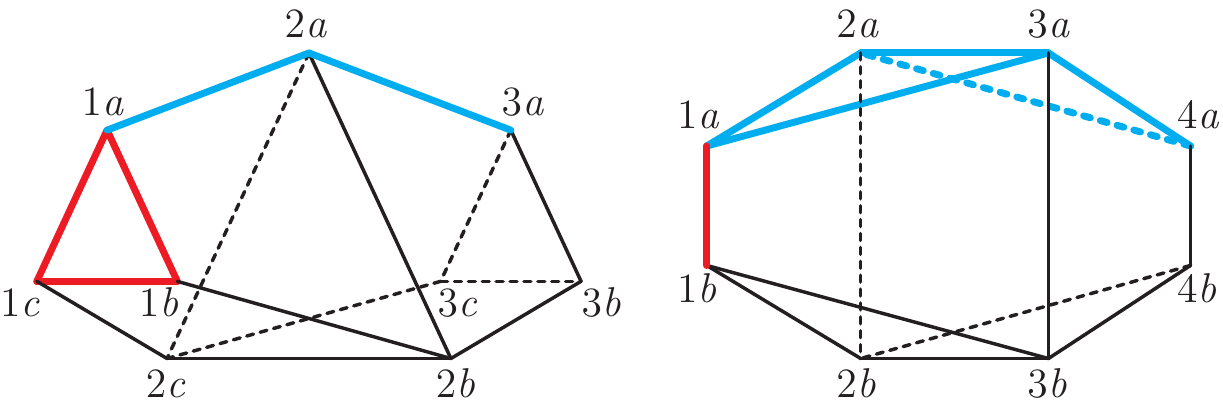}}
	\caption{Polytopal products of non-polytopal graphs: the product of a triangle~$abc$ by a path~$123$ (left) and the product of a segment~$ab$ by two glued triangles~$123$ and~$234$ (right).}
	\label{fig:ctrm}
\end{figure}

Our study of polytopality of Cartesian products of graphs was inspired by Ziegler's prototype question: 

\begin{question}[Ziegler~\cite{crm}]\label{qu:petersen}
Is the product of two Petersen graphs polytopal?
\end{question}

Incidentally, we already answered this question in the case of dimension~$6$ in Corollary~\ref{coro:cycles}: the product of two Petersen graphs cannot be simply polytopal since it contains an induced Petersen graph. However, we have no answer for dimensions~$4$ and~$5$.

Before starting, let us observe that the necessary conditions of Proposition~\ref{prop:necessaryconditions} are  preserved under Cartesian products in the following sense:

\begin{proposition}
If two graphs~$G$ and~$H$ are respectively $d$- and \connected{e}, and respectively satisfy $d$- and $e$-PSP, then their product~$G\times H$ is \connected{(d+e)} and satisfies $(d+e)$-PSP. 
\end{proposition}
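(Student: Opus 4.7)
The plan is to prove the connectivity and PSP claims separately, each lifting the analogous structure from the factors.

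For $(d+e)$-connectedness, let $S\subseteq V(G\times H)$ with $|S|\le d+e-1$, and write $R_h\eqdef V(G)\times\{h\}$ and $C_g\eqdef\{g\}\times V(H)$ for the row and column through $h$ and $g$ respectively; each is a copy of $G$ or of $H$. The key observation is that for any $(g,h)\notin S$, the union $R_h\cup C_g$ meets $S$ in at most $d+e-1$ points, since the two pieces overlap precisely in $(g,h)\notin S$; hence either $|S\cap R_h|<d$ or $|S\cap C_g|<e$, and in the former case $R_h\ssm S$ is connected by $d$-connectedness of $G$ while in the latter $C_g\ssm S$ is connected by $e$-connectedness of $H$. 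To connect two prescribed vertices $a=(g_a,h_a)$ and $b=(g_b,h_b)$ outside $S$, I would first try the corner path $a\to(g_b,h_a)\to b$ routed through $R_{h_a}$ and then $C_{g_b}$; if the corner lies in $S$ or one of the two pieces is too damaged, the pigeon-hole count above still forces the existence of a well-chosen intermediate vertex $(g^*,h^*)$ for which both $R_{h^*}\ssm S$ and $C_{g^*}\ssm S$ are connected, and the row/column segments through this intermediate furnish the required path.

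For the $(d+e)$-PSP, fix a vertex $(g,h)\in V(G\times H)$. Use $d$-PSP of $G$ to obtain a principal subdivision $\Sigma_G$ of $K_{d+1}$ at $g$, with outer vertices $g_1,\dots,g_d$ (the actual neighbors of $g$ in $\Sigma_G$) and internally disjoint paths $\pi_{ij}$ joining $g_i$ to $g_j$ through $V(G)\ssm\{g,g_1,\dots,g_d\}$. Symmetrically choose $\Sigma_H$ at $h$ with outer vertices $h_1,\dots,h_e$ and subdividing paths $\rho_{kl}$. Declare the $d+e$ outer vertices of the intended subdivision of $K_{d+e+1}$ at $(g,h)$ to be $u_i\eqdef(g_i,h)$ for $i=1,\dots,d$ and $v_k\eqdef(g,h_k)$ for $k=1,\dots,e$; each is a genuine neighbor of $(g,h)$ in $G\times H$, so the $d+e$ principal edges are present and unsubdivided. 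For the subdividing edges, take the lift $\pi_{ij}\times\{h\}$ inside $R_h$ to join $u_i$ and $u_j$, the lift $\{g\}\times\rho_{kl}$ inside $C_g$ to join $v_k$ and $v_l$, and the length-$2$ path $u_i\to(g_i,h_k)\to v_k$ to join $u_i$ and $v_k$.

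The verification of internal disjointness splits into three pools: internal vertices of the row paths lie in $(V(G)\ssm\{g,g_1,\dots,g_d\})\times\{h\}$, those of the column paths in $\{g\}\times(V(H)\ssm\{h,h_1,\dots,h_e\})$, and those of the cross paths are the $de$ mixed vertices $(g_i,h_k)$. The three pools are pairwise disjoint because they differ in the first or second coordinate; within each pool, distinctness follows from the internal disjointness of the subdividing paths in $\Sigma_G$ and $\Sigma_H$ and from the fact that distinct pairs $(i,k)$ yield distinct mixed vertices. Finally, no internal vertex coincides with an outer vertex or with the principal vertex. I expect the main technical obstacle to be organizing the connectivity argument cleanly: the corner path may be blocked at the corner itself, and one has to verify, using the pigeon-hole bound on $|S|$, that a suitable detour intermediate always exists. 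The PSP part, by contrast, is essentially bookkeeping once one fixes the bipartite structure of outer vertices $u_i$ versus $v_k$.
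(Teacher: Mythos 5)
Your principal-subdivision argument is correct and is essentially the paper's own: the paper likewise takes the neighbor set $(N_v\times\{w\})\cup(\{v\}\times N_w)$, lifts the subdividing paths of the two factors into the row $G\times\{w\}$ and the column $\{v\}\times H$, and joins the two groups of outer vertices by the length-$2$ paths through the mixed vertices $(x,y)$. That half of the proposal needs no changes.

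The connectivity half has a genuine gap, and it sits exactly where you suspected. (Note that the paper does not prove this part at all; it cites the results of Chiue--Shieh and \v{S}pacapan, namely $\kappa(G\times H)=\min\bigl(\kappa(G)|H|,\kappa(H)|G|,\delta(G)+\delta(H)\bigr)\ge\kappa(G)+\kappa(H)$.) Your opening observation is sound: since $R_h\cap C_g=\{(g,h)\}$ and $(g,h)\notin S$, the sets $S\cap R_h$ and $S\cap C_g$ are disjoint, so one of them has size $<d$ resp.\ $<e$ and the corresponding row or column survives deletion of $S$. But the passage from there to a path between $a$ and $b$ is the actual content of the theorem, and ``the pigeon-hole count forces a well-chosen intermediate $(g^*,h^*)$'' does not supply it. Even granting a vertex $(g^*,h^*)$ whose row and column both survive, you must still route $a$ into the cross $R_{h^*}\cup C_{g^*}$: the natural move is to walk along a surviving row or column through $a$ to its single intersection point with the cross, but that corner may lie in $S$, and this can happen simultaneously for the row and the column through $a$. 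Excluding this requires a further counting argument --- roughly, that if every candidate corner is blocked then $S$ is so concentrated in two rows, or so consumed by bad columns, that some entire row or column is disjoint from $S$ and serves as a bridge --- together with a case split according to which of $R_{h_a},C_{g_a},R_{h_b},C_{g_b}$ are good. As written, the argument asserts the conclusion at the point where the work begins. Either carry out that case analysis explicitly or, as the paper does, invoke the known inequality $\kappa(G\times H)\ge\kappa(G)+\kappa(H)$.
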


\begin{proof}
The connectivity of a Cartesian product of graphs was studied in~\cite{cs-ccpg-99}. In fact, it is even proved in~\cite{s-ccpg-08} that
$$\kappa(G\times H)=\min(\kappa(G)|H|,\kappa(H)|G|,\delta(G)+\delta(H))\ge\kappa(G)+\kappa(H),$$
where~$\kappa(G)$ and~$\delta(G)$ respectively denote the connectivity and the minimum degree of a graph~$G$.

For the principal subdivision property, consider a vertex~$(v,w)$ of~$G\times H$. Choose a principal subdivision of~$K_{d+1}$ in~$G$ with principal vertex~$v$ and neighbors~$N_v$, and a principal subdivision of~$K_{e+1}$ in~$H$ with principal vertex~$w$ and neighbors~$N_w$. This gives rise to a principal subdivision of~$K_{d+e+1}$ in $G\times H$ with principal vertex~$(v,w)$ and neighbors~$(N_v\times\{w\})\cup(\{v\}\times N_w)$. Indeed, for~$x,x'\in N_v$, the vertices~$(x,w)$ and~$(x',w)$ are connected by a path in~$G\times w$ by construction; similarly, for~${y,y'\in N_w}$, the vertices~$(v,y)$ and~$(v,y')$ are connected by a path in~$v\times H$. Finally, for each~$x\in N_v$ and~$y\in N_w$, connect~$(x,w)$ to~$(v,y)$ via the path of length~$2$ that passes through~$(x,y)$. All these paths are disjoint by construction.
\end{proof}

%%%%%%%%%%%

\subsection{Simply polytopal products}\label{subsec:simplypolytopalproducts}

A product of simply polytopal graphs is automatically simply polytopal. We prove that the reciprocal statement is also true:

\begin{theorem}\label{theo:simplypolytopalproducts}
A product of graphs is simply polytopal if and only if its factors are.
\end{theorem}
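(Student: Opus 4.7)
The easy direction is immediate, since the Cartesian product of two simple polytopes is a simple polytope realizing the product graph. For the hard direction, my plan is to start from a simple $d$-polytope $R$ realizing $G \times H$ and to identify, at each vertex $(v,w)$, the horizontal slice $V(G) \times \{w\}$ as the vertex set of a $p$-face of $R$; that face, being a face of a simple polytope, will itself be simple and realize $G$, and symmetrically for $H$. Since $R$ is $d$-regular and $\deg_{G\times H}(v,w) = \deg_G(v) + \deg_H(w)$, both $G$ and $H$ are necessarily regular of some degrees $p$ and $q$ with $p+q = d$.

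I would classify each edge of $G \times H$ as a \emph{$G$-edge} or an \emph{$H$-edge} according to which factor it comes from, so that at each vertex $(v,w)$ there are $p$ $G$-edges and $q$ $H$-edges, spanning by simplicity a $p$-face $F_w^v$ and a $q$-face $F_v^w$ respectively. The first step is a short case analysis of short induced cycles of $G \times H$, tracking which coordinate is preserved by each edge: it should yield that induced 3-cycles are either entirely horizontal (inside some $V(G) \times \{w\}$) or entirely vertical, while induced 4-cycles are horizontal, vertical, or \emph{squares} of the form $(v,w)(v',w)(v',w')(v,w')$ with $\{v,v'\} \in E(G)$ and $\{w,w'\} \in E(H)$. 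Each such square is then a 2-face of $R$ by Proposition~\ref{prop:cycles}; in particular, the unique 2-face of $R$ containing both a $G$-edge and an $H$-edge incident to a common vertex must be one of these squares.

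The main obstacle will be the following monochromaticity lemma: any 2-face $F$ of $R$ spanned at a vertex $(v,w)$ by two $G$-edges $(v,w)(v',w)$ and $(v,w)(v'',w)$ contains only $G$-edges, and therefore lies entirely within $V(G) \times \{w\}$. I would argue by tracing around $F$: if the second edge of $F$ at $(v',w)$ were an $H$-edge $(v',w)(v',w')$, then by the previous paragraph $F$ would have to be the square at $(v',w)$ spanned by these two edges, but that square does not contain $(v'',w)$, a contradiction. Hence the edge of $F$ at $(v',w)$ other than $(v,w)(v',w)$ is again a $G$-edge, and iterating around $F$ gives the claim.

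With this lemma in hand, the conclusion is routine. Each 2-face of the $p$-face $F_w^v$ through $(v,w)$ is spanned by two $G$-edges and so lies in $V(G) \times \{w\}$; at the adjacent vertex $(v',w)$ the $p-1$ such 2-faces contribute $p-1$ distinct $G$-edges other than $(v,w)(v',w)$, so $F_w^v$ already contains all $p$ $G$-edges at $(v',w)$, forcing $F_w^v = F_w^{v'}$. Iterating along the connected graph $G$ produces a single $p$-face $F_w$ with vertex set exactly $V(G) \times \{w\}$ and graph isomorphic to $G$; being a face of $R$, it is simple, so $G$ is simply $p$-polytopal, and symmetrically $H$ is simply $q$-polytopal, as desired.
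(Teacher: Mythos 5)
Your proof is correct, and it takes a genuinely different route from the paper's. The paper works from the top of the face lattice down: it first classifies all \emph{facets} of a simple polytope $P$ realizing $G\times H$ (every facet has graph $G'\times H$ or $G\times H'$ for a regular induced subgraph of one factor), and then runs an induction on $d+e$, deriving a contradiction from the fact that a putative face $G'\times\{w\}$ would lie in only one facet. You instead work from the bottom up: after observing that triangles are monochromatic and that induced $4$-cycles are horizontal, vertical, or ``squares'' $\{v,v'\}\times\{w,w'\}$ (which are $2$-faces by Proposition~\ref{prop:cycles}), you prove a monochromaticity lemma for $2$-faces spanned by two $G$-edges and propagate it to show that the $p$-face spanned by the $G$-edges at any vertex is the same all along a slice $V(G)\times\{w\}$. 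Both arguments ultimately rest on the same two tools --- the squares being $2$-faces, and propagation along the connectivity of a factor (your final step is close in spirit to the paper's Observation~\ref{obs:propagate}) --- but yours avoids induction on dimension entirely and has the pleasant by-product of exhibiting $G\times\{w\}$ and $\{v\}\times H$ explicitly as faces of the realizing polytope, whereas the paper's facet classification is what it later reuses for Corollary~\ref{coro:simplypolytopalproducts}. One small point to tighten: in the monochromaticity lemma, ``iterating around $F$'' should also rule out the \emph{last} edge of the cycle being an $H$-edge; the cleanest fix is to note that if $F$ contained any $H$-edge then some vertex of $F$ would carry both a $G$-edge and an $H$-edge of $F$, forcing $F$ to be a square --- but a square has exactly one $G$-edge and one $H$-edge at \emph{every} vertex, contradicting the two $G$-edges of $F$ at $(v,w)$.
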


Applying Theorem~\ref{theo:kalai}, we obtain a strong characterization of the simply polytopal~\mbox{products}:

\begin{corollary}\label{coro:simplypolytopalproducts}
The polytope realizing the above product of graphs is unique. Therefore, products of simple polytopes are the only simple polytopes whose graph is a product.\qed
\end{corollary}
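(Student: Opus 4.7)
The forward implication is immediate: if $P_1$ and $P_2$ are simple polytopes realizing $G$ and $H$, then $P_1\times P_2$ is a simple polytope whose graph is $G\times H$. For the converse, suppose $P$ is a simple $d$-polytope with graph $G\times H$. Since $P$ is simple, $G\times H$ is $d$-regular; as $\deg_{G\times H}(v,w)=\deg_G(v)+\deg_H(w)$, both factors are regular, say $p$- and $q$-regular with $p+q=d$. Connectivity of $P$ also forces both $G$ and $H$ to be connected. The plan is to exhibit, for each fixed $w_0\in V(H)$, a $p$-face of $P$ whose vertex set is exactly $V(G)\times\{w_0\}$; a symmetric construction on the other factor then finishes the proof.

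Fix $v_0\in V(G)$, $w_0\in V(H)$, and set $v^\star=(v_0,w_0)$. By simplicity, the $p$ edges at $v^\star$ pointing in the $G$-direction span a $p$-face $F$ of $P$, which is itself a simple $p$-polytope. The core claim is that for every neighbor $v_1\in N_G(v_0)$, the $p$ edges of $F$ at $(v_1,w_0)$ are again precisely the $G$-direction edges. The key input is Proposition~\ref{prop:cycles}: for any $w_1\in N_H(w_0)$, the four vertices $(v_0,w_0)$, $(v_1,w_0)$, $(v_1,w_1)$, $(v_0,w_1)$ form an induced $4$-cycle in $G\times H$ (either diagonal would require $v_0=v_1$ or $w_0=w_1$), and hence bound a $2$-face $C$ of $P$. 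By simplicity of $P$ at $v^\star$, this $C$ is the unique $2$-face containing the two edges $\conv\{v^\star,(v_1,w_0)\}$ and $\conv\{v^\star,(v_0,w_1)\}$.

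Now, if the $H$-direction edge $\conv\{(v_1,w_0),(v_1,w_1)\}$ belonged to $F$, then $F\cap C$ would be a face of the polygon $C$ containing the two consecutive edges of $C$ meeting at $(v_1,w_0)$; the only such face is $C$ itself, so $C\subseteq F$, forcing $F$ to contain the $H$-direction edge $\conv\{v^\star,(v_0,w_1)\}$ at $v^\star$ and contradicting the definition of $F$. Hence no $H$-direction edge at $(v_1,w_0)$ lies in $F$, and since $F$ is simple of dimension $p$, all $p$ edges of $F$ at $(v_1,w_0)$ must be $G$-direction edges. Propagating this observation along $G\times\{w_0\}$ using connectivity of $G$ shows that $F$ contains every vertex of $V(G)\times\{w_0\}$, with edges there matching those of $G$; since at every such vertex all $F$-edges stay inside $V(G)\times\{w_0\}$, connectivity of $F$ forbids any further vertices, and we conclude $V(F)=V(G)\times\{w_0\}$ with graph isomorphic to $G$. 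Thus $G$ is simply $p$-polytopal, and a symmetric argument yields that $H$ is simply $q$-polytopal. I expect the main subtle point to be the step $F\cap C=C$, which relies on the elementary but essential fact that the only face of a polygon containing two of its edges sharing a common vertex is the polygon itself.
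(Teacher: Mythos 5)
Your argument for the substantive direction is correct, but it proves Theorem~\ref{theo:simplypolytopalproducts} rather than the corollary, and it does so by a route genuinely different from the paper's. The paper classifies \emph{all facets} of $P$ (Observation~\ref{obs:propagate} and Lemma~\ref{lem:facets}: every facet is of the form $G'\times H$ or $G\times H'$) and then runs an induction on $d+e$. You instead build, at a single vertex, the $p$-face $F$ spanned by the $G$-direction edges and propagate the dichotomy ``$G$-edges in, $H$-edges out'' along $G\times\{w_0\}$ using the induced $4$-cycles of Proposition~\ref{prop:cycles} and the fact that a face of a polygon containing two consecutive edges is the whole polygon. This is sound (the only unstated case is $q=0$, where $H$ is a point and there is nothing to prove, and symmetrically $p=0$), and it buys something the paper's induction does not: an explicit realization of $G$ as a face of $P$, with no induction on dimension. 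The propagation mechanism is essentially that of Observation~\ref{obs:propagate}, but applied to a low-dimensional face instead of to facets.

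The gap is that the corollary claims more than ``$G$ and $H$ are simply polytopal.'' It asserts that the simple polytope realizing $G\times H$ is \emph{unique} and, consequently, that $P$ itself is combinatorially a \emph{product} of simple polytopes. Exhibiting faces $F$ and $F'$ of $P$ realizing $G$ and $H$ does not by itself show that the face lattice of $P$ is the product of the face lattices of $F$ and $F'$. The missing ingredient is Theorem~\ref{theo:kalai} (Blind--Mani/Kalai): $P$ and $F\times F'$ are simple polytopes with the same graph $G\times H$, hence combinatorially equivalent, which gives both the product structure and the uniqueness at once. This is exactly how the paper deduces the corollary from the theorem; without that final appeal your proof stops one step short of the stated statement.
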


Let~$G$ and~$H$ be two connected regular graphs of degree~$d$~and~$e$ respectively, and assume that the graph~$G\times H$~is the graph of a simple \poly{(d+e)}tope~$P$. By Proposition~\ref{prop:cycles}, for all edges~$a$ of~$G$ and~$b$ of~$H$, the \cycle{4} $a\times b$ is the graph of a \face{2} of~$P$.

\begin{observation}\label{obs:propagate}
Let $F$~be any facet of $P$, let $v$~be a vertex of~$G$, and let~$\{x,y\}$~be an edge of~$H$ such that $(v,x)\in F$ and $(v,y)\notin F$. Then~$G\times\{x\}\subset F$ and~$G\times\{y\}\cap F=\emptyset$.
\end{observation}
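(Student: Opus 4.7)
The plan is to exploit simplicity at $(v,x)$ together with the \face{2} structure furnished by Proposition~\ref{prop:cycles}, in order to propagate both the inclusion $(v,x)\in F$ and the exclusion $(v,y)\notin F$ across each edge of~$G$, and then to invoke connectedness of~$G$ to finish.

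First I would use the fact that $P$ is simple at $(v,x)$: the vertex $(v,x)$ has exactly $d+e$ incident edges and lies in exactly $d+e$ facets of~$P$, each determined by the single incident edge it misses. Since $(v,y)\notin F$, the unique edge missed by $F$ at $(v,x)$ must be $\{(v,x),(v,y)\}$, so every other edge at $(v,x)$ is contained in~$F$. In particular, $(v',x)\in F$ for every neighbor $v'$ of $v$ in~$G$, and $(v,y')\in F$ for every neighbor $y'$ of $x$ in~$H$ with $y'\ne y$.

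Next I would propagate across one $G$-edge. For a neighbor $v'$ of $v$, Proposition~\ref{prop:cycles} provides a \face{2}~$\sigma$ of~$P$ whose graph is the square $\{v,v'\}\times\{x,y\}$. The intersection $F\cap\sigma$ is a face of~$\sigma$ containing $(v,x)$ and $(v',x)$ but not $(v,y)$; since the only face of a \gon{4} containing three of its vertices is the whole \gon{4}, this forces $(v',y)\notin F$. Applying the simplicity argument of the previous step at the vertex $(v',x)$, I conclude that every edge at $(v',x)$ except $\{(v',x),(v',y)\}$ lies in~$F$, so the roles of~$v$ and~$v'$ are now symmetric. Iterating along a BFS tree rooted at~$v$ in~$G$ and invoking the connectedness of~$G$, I obtain $(u,x)\in F$ and $(u,y)\notin F$ for every $u\in V(G)$. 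The edges of $G\times\{x\}$ are then automatically edges of~$F$ (both endpoints lie in~$F$ and the edge is not the unique missed one at either endpoint), giving $G\times\{x\}\subset F$ and $G\times\{y\}\cap F=\emptyset$.

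The step I expect to be the main delicate point is the propagation: one has to ensure that the \emph{exclusion} $(v,y)\notin F$ really spreads, and not only the inclusion $(v,x)\in F$. This is exactly where it matters that $\{v,v'\}\times\{x,y\}$ bounds a genuine \face{2} of~$P$, and not merely an induced \cycle{4} in the 1-skeleton, so that $F\cap\sigma$ is constrained to be a face of that square. Without Proposition~\ref{prop:cycles} the whole argument would collapse.
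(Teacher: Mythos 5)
Your proof is correct and follows essentially the same route as the paper's: simplicity at $(v,x)$ forces $F$ to contain every edge at $(v,x)$ except $\{(v,x),(v,y)\}$, the \face{2} $\conv\{v,v'\}\times\conv\{x,y\}$ guaranteed by Proposition~\ref{prop:cycles} rules out $(v',y)\in F$ (the paper phrases this as $C\cap F$ having exactly three vertices, you as a face of a \gon{4} containing three vertices being the whole \gon{4} --- the same contradiction), and connectedness of $G$ propagates the conclusion. Your closing remark correctly identifies the crux: it is the fact that the square is a genuine face, not merely an induced cycle, that constrains $F\cap\sigma$.
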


\begin{proof}
Since the polytope is simple, all neighbors of~$(v,x)$ except~$(v,y)$ are connected to~$(v,x)$ by an edge of~$F$. Let~$v'$ be a neighbor of~$v$ in~$G$, and let~$C$ be the \face{2}~$\conv\{v,v'\}\times\conv\{x,y\}$ of~$P$. If~$(v',y)$ were a vertex of~$F$, the intersection~$C\cap F$ would consist of exactly three vertices (because~$(v,y)\notin F$), a contradiction. In summary,~$(v',x)\in F$ and~${(v',y)\notin F}$, for all neighbors~$v'$ of~$v$. Repeating this argument and using the fact that~$G$~is connected yields~$G\times\{x\}\subset F$ and~$G\times\{y\}\cap F=\emptyset$.   
\end{proof}

\begin{lemma}\label{lem:facets}
The graph of any facet of~$P$ is either of the form~$G'\times H$ for a \regular{(d-1)} induced subgraph~$G'$ of~$G$, or of the form~$G\times H'$ for an \regular{(e-1)} induced subgraph~$H'$~of~$H$.
\end{lemma}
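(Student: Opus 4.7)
The plan is to show that each facet $F$ of $P$ must ``split'' cleanly along exactly one of the two factor directions, by analyzing, at each vertex of $F$, the unique neighbor in $P$ that lies outside $F$. Since $P$ is simple and $F$ is a facet of dimension $d+e-1$, every vertex of $F$ has degree $d+e-1$ in the graph of $F$ and degree $d+e$ in the graph of $P$, so there is exactly one edge of $G\times H$ at each vertex of $F$ whose other endpoint is not in $F$. I will call the vertex \emph{$G$-missing} if this absent edge is of the form $\{(v,w),(v',w)\}$ with $v'\sim v$ in $G$, and \emph{$H$-missing} otherwise.

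Fix a base vertex $(v_0,w_0)\in F$ and assume, without loss of generality, that it is $H$-missing, with absent edge to $(v_0,y_0)$. Observation~\ref{obs:propagate} immediately gives $G\times\{w_0\}\subset F$ and $G\times\{y_0\}\cap F=\emptyset$. Define $W_F \eqdef \set{w\in V(H)}{G\times\{w\}\subset F}$; the heart of the argument is to prove $V(F) = V(G)\times W_F$. The inclusion $\supseteq$ is immediate. For $\subseteq$, it suffices, since the graph of $F$ is connected, to show that if $(v_1,w_1)$ and $(v_2,w_2)$ are adjacent in $F$ and $w_1\in W_F$, then $w_2\in W_F$. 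The case $w_1=w_2$ is trivial. In the remaining case, $v_1=v_2$ and $w_1\sim w_2$ in $H$, I would show $G\times\{w_2\}\subset F$ by induction along a path $v_1 = u_0, u_1, \dots, u_k = u$ in the connected graph $G$, for an arbitrary target vertex $u\in V(G)$. At step $i$, the induced \cycle{4}~$\{u_i, u_{i+1}\}\times\{w_1, w_2\}$ is a \face{2} of $P$ by Proposition~\ref{prop:cycles}; three of its vertices, namely $(u_i,w_1)$, $(u_{i+1},w_1)$ and $(u_i,w_2)$, lie in $F$; since $F\cap(\text{this \face{2}})$ is a face of the \face{2} containing three vertices, it must equal the whole \face{2}, forcing $(u_{i+1},w_2)\in F$.

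Once $V(F)=V(G)\times W_F$ is established, the graph of $F$, which is the induced subgraph of the graph of $P$ on $V(F)$ (as $F$ is a face, every edge of $P$ joining two vertices of $F$ lies in $F$), is exactly $G\times H'$, where $H'$ is the subgraph of $H$ induced on $W_F$. A simple degree count finishes the proof: each vertex $(v,w')$ of $F$ has degree $d+e-1$ in $G\times H'$; the $d$ neighbors of the form $(v'',w')$ with $v''\sim v$ account for $d$ of these, leaving $e-1$ neighbors of the form $(v,w'')$ with $w''\in W_F$ and $w''\sim w'$, so $H'$ is \regular{(e-1)}. The case of a $G$-missing base vertex is symmetric and yields $F=G'\times H$ with $G'$ a \regular{(d-1)} induced subgraph of $G$. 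The main obstacle is the propagation step when $v_1=v_2$: Observation~\ref{obs:propagate} only transfers one slice $G\times\{w\}$ into $F$ per application, so Proposition~\ref{prop:cycles} on induced \cycle{4}s is essential for sweeping the next slice across all of $G$.
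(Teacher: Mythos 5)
Your proof is correct and takes essentially the same route as the paper's: both arguments rest on Observation~\ref{obs:propagate}, on the fact that the induced \cycle{4}s $a\times b$ are \face{2}s that must intersect the facet~$F$ properly, and on the same final degree count to deduce that~$H'$ is \regular{(e-1)}. The only difference is organizational: you propagate membership in~$W_F$ along the connected graph of~$F$ (re-deriving the ``three vertices of a quadrilateral \face{2} force the fourth'' step), whereas the paper partitions~$V(H)$ into the classes $H'$, $H''$, $H'''$ and uses connectivity of~$H$ to rule out the mixed class.
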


\begin{proof}
Assume that the graph of a facet~$F$ is not of the form~$G'\times H$.  Then there exists a vertex~$v$ of~$G$ and an edge~$\{x,y\}$ of~$H$ such that~$(v,x)\in F$ and~$(v,y)\notin F$. By Observation~\ref{obs:propagate}, the subgraph~$H'$ of~$H$ induced by the vertices~$y\in H$ such that~$G\times \{y\}\subset F$ is nonempty. We now prove that the graph~$\gr(F)$ of~$F$ is exactly~$G\times H'$. 

The inclusion~$G\times H'\subset\gr(F)$ is clear: by definition,~$G\times\{y\}$ is a subgraph of~$\gr(F)$ for any vertex~$y\in H'$. For any edge~$\{x,y\}$ of~$H'$ and any vertex~$v\in G$, the two vertices~$(v,x)$ and~$(v,y)$ are contained in~$F$, so the edge between them is an edge of~$F$; if not, we would have an improper intersection between~$F$ and this edge. 
For the other inclusion, define~$H'' \eqdef \set{y\in H}{G\times\{y\}\cap F = \emptyset}$ and let $H''' \eqdef H \ssm(H'\cup H'')$. If $H'''\ne\emptyset$, the fact that~$H$~is connected ensures that there is an edge between some vertex of~$H'''$ and either a vertex of~$H'$ or~$H''$. This contradicts Observation~\ref{obs:propagate}.

We have proved that~$G\times H'=\gr(F)$. The fact that~$F$ is a simple \poly{(d+e-1)}\-tope and the \regular{d}ity of~$G$ together ensure that~$H'$~is \regular{(e-1)}.
\end{proof}

\begin{proof}[Proof of Theorem~\ref{theo:simplypolytopalproducts}]
One direction is clear. For the other direction, proceed by induction on~$d+e$, the cases~$d=0$ and~$e=0$ being trivial.  Now assume that~$d,e\ge1$, that~$G\times H=\gr(P)$, and that $G$~is not the graph of a \poly{d}tope. By Lemma~\ref{lem:facets}, all facets of~$P$ are of the form~$G'\times H$ or~$G\times H'$, where~$G'$ (resp.~$H'$) is an induced \regular{(d-1)} (resp.~\regular{(e-1)}) subgraph of~$G$ (resp.~$H$). By induction, the second case does not arise. We fix a vertex~$w$ of~$H$. Then induction tell us that~$F_w \eqdef G'\times\{w\}$ is a face of~$P$, and~$G'\times H$ is the only facet of~$P$ that contains~$F_w$ by Lemma~\ref{lem:facets}. This cannot occur unless~$F_w$~is a facet, but this only happens in the base case~$H=\{w\}$.
\end{proof}

\begin{example}
Consider a graph~$G$ that is \regular{d}, \connected{d}, and satisfies $d$-PSP, but is not simply \poly{d}topal. Then, any product of~$G$ by a simply \poly{e}topal graph is \regular{(d+e)}, \connected{(d+e)}, satisfies $(d+e)$-PSP, but is not simply \poly{(d+e)}topal.

For example, the product of the circulant graph~$C_8(1,4)$ by the graph of the \dimensional{d} cube is a non simply polytopal graph for non-trivial reasons. For any~$m\ge 4$, the product of the circulant graph~$C_{2m}(1,m)$ by a segment is non-polytopal for non-trivial reasons.
\end{example}

%%%%%%%%%%%

\subsection{Polytopal products of non-polytopal graphs}\label{subsec:polytopalproducts}

In this section, we give a general construction to obtain polytopal products starting from a polytopal graph~$G$ and a non-polytopal one~$H$. We need the graph~$H$ to be the graph of a \defn{regular subdivision} of a polytope~$Q$, that is, the graph of the upper\footnote{The unusual convention we adopt here of defining a subdivision as the projection of the upper facets of the lifting simplifies the presentation of the construction.} envelope (the set of all upper facets with respect to the last coordinate) of the convex hull of the point set~$\set{(q,\omega(q))}{q\in V(Q)}\subset\R^{e+1}$ obtained by lifting the vertices of~$Q\subset\R^e$ according to a \defn{lifting function}~$\omega:V(Q)\to\R$.

\begin{proposition}\label{prop:subdiv}
If~$G$ is the graph of a \poly{d}tope~$P$, and~$H$ is the graph of a regular subdivision of an \poly{e}tope~$Q$, then~$G\times H$~is \poly{(d+e)}topal. In the case~$d>1$, the regular subdivision of~$Q$ can even have internal vertices.
\end{proposition}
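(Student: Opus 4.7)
The plan is to construct an explicit $(d+e)$-polytope $R$ with $\gr(R)=G\times H$ via a ``scaled product'' construction. After translating $P\subset\R^d$ so that the origin lies in its interior, and shifting $\omega$ by an affine function (which preserves the induced regular subdivision) so that $\omega\geq 0$, for a small parameter $\epsilon>0$ I would set
\[ R \;:=\; \conv\Big\{\big((1+\epsilon\,\omega(q))\,p,\,q\big)\in\R^d\times\R^e \;:\; p\in V(P),\; q\in V(\text{subdivision})\Big\}. \]
Geometrically, over each subdivision vertex $q$ one places a scaled copy $(1+\epsilon\omega(q))P$ of $P$; the larger $\omega(q)$ is, the more this slice protrudes radially from the origin. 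This protrusion is precisely the mechanism by which vertices interior to $Q$ become extreme in $R$.

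To verify that $R$ realizes $G\times H$, one exhibits for each vertex and each edge of $G\times H$ an explicit linear functional $\mu(x,y)=\ell(x)+\nu(y)$ on $\R^{d+e}$ witnessing extremality. For a boundary vertex $q\in V(Q)$, extremality of $((1+\epsilon\omega(q))p,q)$ follows from the corresponding fact for $P\times Q$ for $\epsilon$ small. For an interior subdivision vertex, the key identity
\[ \mu\big((1+\epsilon\omega(q))p,\,q\big) \;=\; \ell(p)+\nu(q)+\epsilon\,\omega(q)\,\ell(p) \]
shows that, after fixing $p=p^*$ maximizing $\ell$ on $V(P)$, maximizing $\mu$ over candidate vertices of $R$ amounts to maximizing $\nu+ch$ on the lifted polytope $\tilde Q:=\conv\{(q,\omega(q))\}\subset\R^{e+1}$ with slope $c=\epsilon\,\ell(p^*)>0$; since $c>0$, this picks out an upper-envelope vertex of $\tilde Q$, which is exactly where interior subdivision vertices live. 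Choosing $\nu$ in the upper-envelope normal cone of $(q,\omega(q))$ in $\tilde Q$ at slope $c$ then produces the edges $(p,q_1)(p,q_2)$ for $q_1q_2\in E(H)$, while the edges $(p_1,q)(p_2,q)$ for $p_1p_2\in E(G)$ live inside the scaled slice $(1+\epsilon\omega(q))P$.

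The hypothesis $d>1$ for subdivisions with internal vertices is forced by the requirement that no extra edges appear in $R$: when $q$ is an interior subdivision vertex, the $|E(G)|$ required edges of type $E(G)\times\{q\}$ are precisely the boundary edges of the scaled $P$-slice over $q$, and these lie on the boundary of $R$ only when $\dim P\geq 2$. For $d=1$, the scaled $P$ is a single segment whose unique edge gets swallowed by the interior of a bipyramid-like configuration over $q$, so this edge of $G\times H$ would be missing from $\gr(R)$. I expect the main technical obstacle to be ruling out spurious edges: one must classify the $2$-faces of $R$ as products of edges or $2$-faces of $P$ with vertices, edges, or $2$-cells of the subdivision, and then verify that each candidate non-edge of $G\times H$ is a diagonal of such a $2$-face.
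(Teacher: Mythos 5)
Your construction is the paper's construction: up to replacing the lifting function $\omega$ by the positive lifting function $1+\epsilon\omega$ (which induces the same regular subdivision), placing over each subdivision vertex $q$ the copy of $P$ scaled by the lifting value is exactly the paper's $R=\conv\set{(\omega(q)p,q)}{p\in V(P),\,q\in V(Q)}$. The gap is in the verification. Exhibiting a supporting functional for each vertex and each edge of $G\times H$ only proves the inclusion $G\times H\subseteq\gr(R)$; the proposition needs equality, and you explicitly defer the converse (``ruling out spurious edges'') as an expected obstacle rather than proving it. A classification of the $2$-faces of $R$, as you suggest, could in principle work, but you have not produced it, and it is the harder half of the argument.

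The paper closes this gap not through $2$-faces but through a complete facet description, which is worth comparing with your plan. It exhibits two families of facets of $R$: for each facet $g$ of $Q$ with inequality $\dotprod{\psi}{y}\le 1$, the facet $P\times g$ cut out by $\dotprod{(0,\psi)}{(x,y)}\le 1$; and for each facet $f$ of $P$ with inequality $\dotprod{\phi}{x}\le 1$ and each cell $c$ of the subdivision with upper facet $\psi_0h+\dotprod{\psi}{y}\le 1$, the facet $f\times c$ cut out by $\psi_0\dotprod{\phi}{x}+\dotprod{\psi}{y}\le 1$ (your ``key identity'' is precisely the computation showing this functional is maximized on $f\times c$). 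It then checks that every $(d+e-2)$-face of a listed facet lies in exactly two listed facets, which forces the list to be the complete facet list of $R$. Since every edge of a polytope lies in some facet, and the union of the edge sets of the listed facets is exactly $G\times H$, no spurious edges can exist. If you want to salvage your write-up with minimal changes, this is the step to add: your functionals already identify the candidate facets, and the ridge-counting argument is what upgrades ``these are faces'' to ``these are all the facets,'' which is what kills extra edges. Your heuristic for why $d>1$ is needed for internal vertices (the unique edge of a $1$-dimensional slice over an interior vertex is not supported) is consistent with the paper, which also does not elaborate on this point.
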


\begin{proof}
Let $\omega:V(Q)\to\R_{>0}$ be a lifting function that induces a regular subdivision of~$Q$ with graph~$H$. Assume without loss of generality that the origin of~$\R^d$ lies in the interior of~$P$. For each~$p\in V(P)$ and~$q\in V(Q)$, we define the point~$\rho(p,q) \eqdef (\omega(q)p,q)\in\R^{d+e}$. Consider
$$R \eqdef \conv\set{\rho(p,q)}{p\in V(P), q\in V(Q)}.$$
Let~$g$ be a facet of~$Q$ defined by the linear inequality~$\dotprod{\psi}{y}\le 1$. Then the inequality~$\dotprod{(0,\psi)}{(x,y)}\le 1$ defines a facet of~$R$, with vertex set~$\set{\rho(p,q)}{p\in P, q\in g}$, and isomorphic to~$P\times g$.

Let~$f$ be a facet of~$P$ defined by the linear inequality~$\dotprod{\phi}{x}\le 1$. Let~$c$ be a cell of the subdivision of~$Q$, and let~$\psi_0 h + \dotprod{\psi}{y}\le 1$ be the linear inequality that defines the upper facet corresponding to~$c$ in the lifting. Then we claim that the linear inequality
$$\chi(x,y) = \psi_0\dotprod{\phi}{x}+\dotprod{\psi}{y} \le 1$$
selects a facet of~$R$ with vertex set~$\set{\rho(p,q)}{p\in f, q\in c}$ that is isomorphic to~$f\times c$. Indeed, 
$$\chi\big(\rho(p,q)\big) = \chi(\omega(q)p,q) = \psi_0\omega(q)\dotprod{\phi}{p}+\dotprod{\psi}{q} \le 1,$$
where equality holds if and only if~$\dotprod{\phi}{p}=1$ and~$\psi_0\omega(q)+\dotprod{\psi}{q}=1$, so that~$p\in f$ and~$q\in c$.

The above set~$\cF$ of facets of~$R$ contains all facets: indeed, any \face{(d+e-2)} of a facet in~$\cF$ is contained in precisely two facets in~$\cF$. Since the union of the edge sets of the facets in~$\cF$ is precisely~$G\times H$, it follows that the graph of~$R$ equals~$G\times H$.

A similar argument proves the same statement in the case when~$d>1$ and~$H$ is a regular subdivision of~$Q$ with internal vertices (meaning that not only the vertices of~$Q$ are lifted, but also a finite number of interior points).
\end{proof}

We already mentioned two examples obtained by such a construction in the beginning of this section (see \fref{fig:ctrm}): the product of a polytopal graph by a path and the product of a segment by a subdivision of an \gon{n} with no internal vertex. Proposition~\ref{prop:subdiv} even produces examples of regular polytopal products which are not simply polytopal:

\begin{example}\label{exm:diamond-subdiv}
Let~$H$ be the graph obtained by a star-clique operation from the graph of an octahedron. It is non-polytopal (Corollary~\ref{coro:starclique}), but it is the graph of a regular subdivision of a \poly{3}tope (see \fref{fig:truncatedoctahedron}). Consequently, the product of~$H$ by any regular polytopal graph is polytopal. Thus, there exist regular polytopal products which are not simply polytopal.
%The diamond graph~$\davidsstar^\star_n$ of Example~\ref{exm:diamond} provides a similar example.

\begin{figure}[htb]
	\centerline{\includegraphics[width=\textwidth]{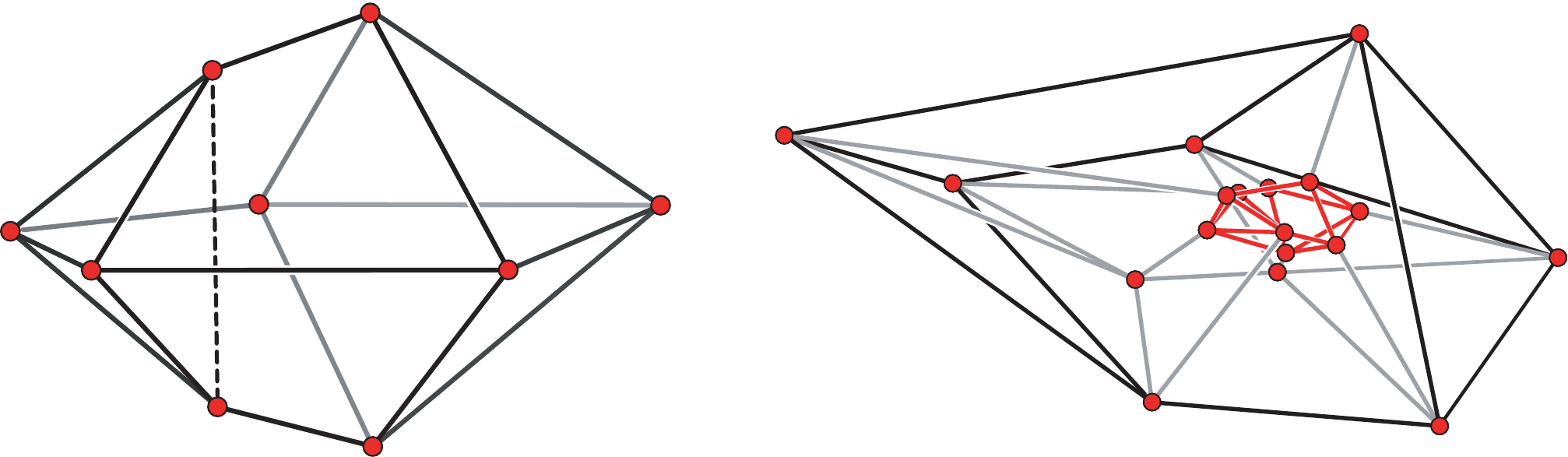}}
	\caption{A non-polytopal \regular{4} graph~$H$ which is the graph of a regular subdivision of a \poly{3}tope (left) and the Schlegel diagram of a \poly{4}tope whose graph is the product of~$H$ by a segment (right).}
	\label{fig:truncatedoctahedron}
\end{figure}

\end{example}

Finally, Proposition~\ref{prop:subdiv} also produces polytopal products of two non-polytopal graphs:

\begin{example}[Product of dominos]
Define the \defn{\domino{p} graph}~$D_p$ to be the product of a path~$P_p$ of length~$p$ by a segment. Let~$p,q\ge 2$. Observe that~$D_p$ and~$D_q$ are not polytopal and that~$D_p\times P_q$ is a regular subdivision of a \poly{3}tope. Consequently, the product of dominos ${D_p\times D_q}$ is a \poly{4}topal product of two non-polytopal graphs (see Figure~\ref{fig:domino}). 

Finally, let us observe that the product~$D_p\times D_q=P_p\times P_q\times (K_2)^2$ can be decomposed in different ways into a product of two graphs. However, in any such decomposition, at least one of the factors is non-polytopal.

\begin{figure}[htb]
	\centerline{\includegraphics[width=.39\textwidth]{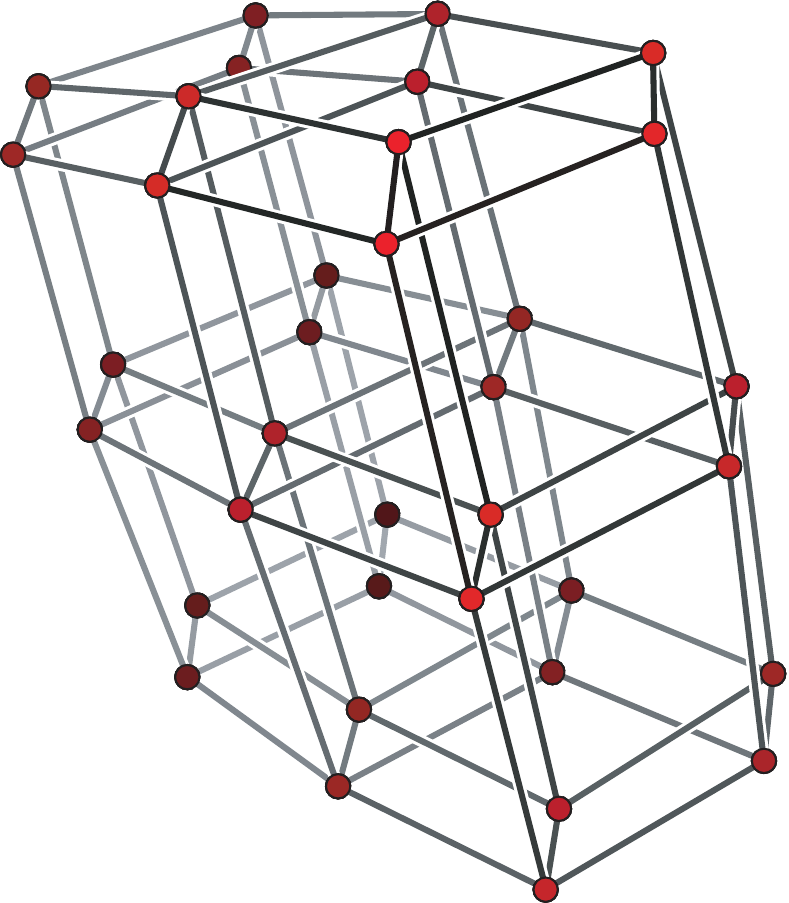} \qquad \includegraphics[width=.59\textwidth]{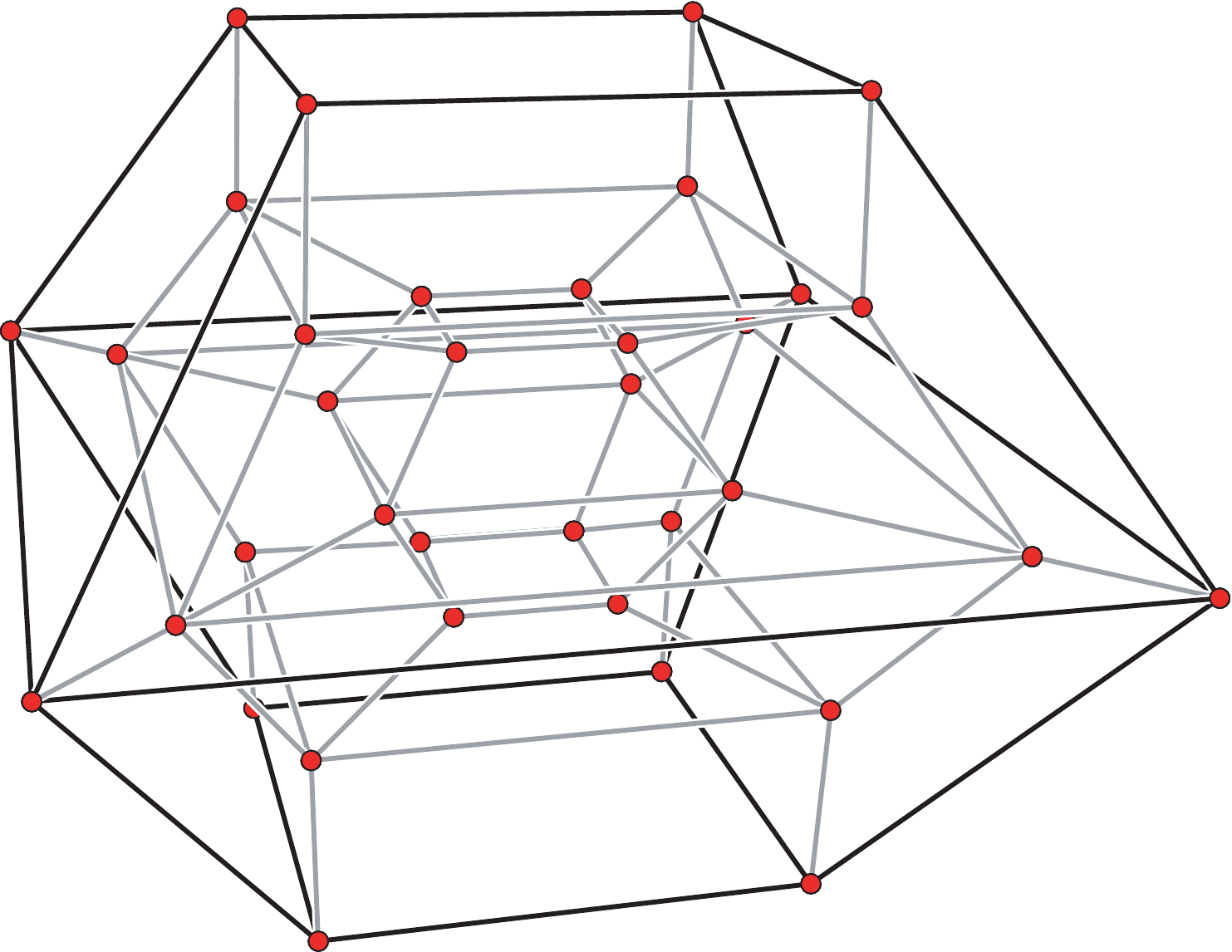}}
	\caption{The graph of the product of two \domino{2}s (left) and the Schlegel diagram of a realizing \poly{4}tope (right).}
	\label{fig:domino}
\end{figure}
\end{example}

%%%%%%%%%%%

\subsection{Product with a segment}\label{subsec:segment}

In this section, we complete our list of examples of products of a segment by a regular graph~$H$. The goal is to illustrate all possible behaviors of such a product regarding polytopality:
\begin{enumerate}
\item If~$H$ is polytopal, then~$K_2\times H$ is polytopal. However, some ambiguities can appear:
\begin{enumerate}
\item The dimension can be ambiguous. For example,~$K_2\times K_n$ is realized by the product of a segment by any neighborly polytope. See~\cite{mpp-psnp} for a discussion on dimensional ambiguity of products of complete graphs.
\item The dimension can be unambiguous, but the combinatorics of the polytope can be ambiguous. In this case, $H$~is not simply polytopal (Theorem~\ref{theo:kalai}). In Proposition~\ref{prop:prismoctahedron}, we determine all possible realizations of the graph of a prism over an octahedron.
\item There can be no ambiguity at all. This happens for example if~$H$ is simply \poly{3}topal.
\end{enumerate}
\item If~$H$ is not polytopal, then~$K_2\times H$ is not simply polytopal (Theorem~\ref{theo:simplypolytopalproducts}). However:
\begin{enumerate}
\item $K_2\times H$ can be polytopal in smaller dimension (Example~\ref{exm:diamond-subdiv}).
\item $K_2\times H$ can be non-polytopal. This happens for example when~$H$ is the complete graph~$K_{n,n}$ (Proposition~\ref{prop:knnxk2}) or when~$H$ is non-polytopal and \regular{3} (Proposition~\ref{prop:GxK2}).
\end{enumerate}
\end{enumerate}

\begin{proposition}\label{prop:knnxk2}
For~$n\ge3$, the graph~$K_2\times K_{n,n}$ is not polytopal.
\end{proposition}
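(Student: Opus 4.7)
The plan is to argue by contradiction: assume $P$ is a \poly{d}tope with graph $G = K_2 \times K_{n,n}$ for some $n \geq 3$. Since $G$ is $(n+1)$-regular, Balinski's theorem forces $d \leq n+1$; since $G$ contains the non-planar graph $K_{n,n}$, the graph $G$ itself is non-planar, so $d \geq 4$ by Steinitz. The body of the proof then consists of a complete identification of the \face{3}s of $P$, followed by a short face-incidence argument inside an arbitrary \face{4} of $P$.

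The technical core is the following lemma: every \face{3} $F$ of $P$ is combinatorially a $3$-cube, with vertex set of the form $\{(i, v) : i \in \{0, 1\},\ v \in \{a_1, a_2, b_1, b_2\}\}$ for some \cycle{4} $\{a_1, a_2, b_1, b_2\}$ of $K_{n,n}$. To prove it I would let $H = \gr(F)$, set $X_i = \{v : (i, v) \in V(H)\}$ for $i \in \{0,1\}$, and split each $X_i$ according to the bipartition $V(K_{n,n}) = A \cup B$. Then $H$ is the disjoint union of $K_{n,n}[X_0]$ in level $0$, $K_{n,n}[X_1]$ in level $1$, and vertical matching edges over $X_0 \cap X_1$. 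Since $H$ must be planar and \connected{3}, a case analysis on the sizes $(|X_i \cap A|, |X_i \cap B|)$ eliminates all configurations but one: any side of size $\geq 3$ in either level produces a $K_{3,3}$ subgraph; any side of size $\leq 1$ produces a vertex of degree $<3$; the mixed case $|X_0 \cap A| = 2$, $|X_0 \cap B| = \beta \geq 3$ is ruled out because $H$ then contains the prism $K_2 \times K_{2, \beta}$, which has $5\beta + 2$ edges on $2\beta + 4$ vertices and therefore violates the bipartite planar bound $e \leq 2v - 4$ as soon as $\beta \geq 3$. The surviving configuration $|X_i \cap A| = |X_i \cap B| = 2$ forces $X_0 = X_1$ by the degree-$3$ constraint, and then $H = K_2 \times C_4 = Q_3$.

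With the lemma in hand, the rest is immediate. Pick any \face{4} $F$ of $P$ (take $F = P$ if $d = 4$) and any vertex $(0, a) \in V(F)$. Then $(0, a)$ lies in some facet of $F$, which is a \face{3} of $P$ and hence by the lemma a cube whose vertex set pairs $(0, v)$ with $(1, v)$ on its $4$-element support; in particular $(1, a) \in V(F)$. By the same observation, a facet of $F$ contains $(0, a)$ if and only if it contains $(1, a)$, so these two vertices lie in exactly the same set of facets of the \poly{4}tope $F$. Since every vertex of a polytope equals the intersection, in its face lattice, of the facets containing it, this forces $(0, a) = (1, a)$ in $F$, the desired contradiction.

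The main obstacle will be the case analysis inside the \face{3} lemma, and within it the sub-case of the prism $K_2 \times K_{2, \beta}$ with $\beta \geq 3$: this subgraph contains no $K_{3,3}$ as a subgraph, and the cleanest way I see to exclude it is through the Euler inequality $e \leq 2v - 4$ for bipartite planar graphs. Everything else (the two dimension bounds, the existence of \face{4}s in a $d$-polytope for $d \geq 4$, and the vertex-facet identification at the end) is a standard appeal to the face lattice.
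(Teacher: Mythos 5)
Your overall strategy coincides with the paper's: both proofs first show that every \face{3} of a hypothetical realization must be a combinatorial $3$-cube of the form $K_2\times C$ for a \cycle{4}~$C$ of~$K_{n,n}$, and then derive a contradiction from the face lattice. Your two deviations are both legitimate and even slightly cleaner. You do not need Theorem~\ref{theo:simplypolytopalproducts} to exclude the simple case $d=n+1$, since your cube lemma and the final argument apply uniformly for all $4\le d\le n+1$; and your closing step (the vertices $(0,a)$ and $(1,a)$ lie in exactly the same set of facets of a \face{4}, hence coincide) replaces the paper's observation that the horizontal \cycle{4} would be contained in only one \face{3}. Both endings are valid face-lattice arguments.

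There is, however, one genuine gap, in the mixed sub-case $|X_0\cap A|=2$, $|X_0\cap B|=\beta\ge 3$ of your cube lemma. The degree-$3$ condition forces the vertical twin of every vertex whose \emph{horizontal} degree is only~$2$, that is, of the $\beta$ vertices of $X_0\cap B$ (and symmetrically of $X_1\cap B$), but it does \emph{not} force the twins of the vertices of $X_0\cap A$: those already have horizontal degree $\beta\ge3$. So the two vertical edges on the $A$-side of your prism $K_2\times K_{2,\beta}$ need not be present, and the guaranteed subgraph has only $5\beta$ edges on $2\beta+4$ vertices; the bipartite planar bound $e\le 2v-4$ then reads $5\beta\le 4\beta+4$ and only rules out $\beta\ge5$, leaving $\beta\in\{3,4\}$ open. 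The configuration is still impossible, but for a different reason: contract the $\beta$ forced vertical edges on the $B$-side; each contracted vertex becomes adjacent to all four (pairwise distinct) $A$-type vertices of~$H$, producing a $K_{4,\beta}$ minor and hence a $K_{3,3}$ minor, contradicting planarity. This is essentially how the paper treats the same configuration, exhibiting a subdivision of $K_{3,3}$ that uses only the forced twins of the large side. Finally, your phrase ``any side of size $\ge3$ in either level produces a $K_{3,3}$ subgraph'' is accurate only when both sides of that level have size at least~$3$, as you implicitly acknowledge by treating the mixed case separately.
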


To prove this proposition, we will need the following well known lemma:

\begin{lemma}\label{lem:8verts}
A \poly{3}tope with no triangular facet has at least~$8$~vertices.
\end{lemma}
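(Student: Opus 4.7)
The plan is to run the standard Euler-formula double counting. Let $P$ be a \poly{3}tope with $v$ vertices, $e$ edges, and $f$ facets; assume no facet of $P$ is a triangle. Euler's formula gives $v - e + f = 2$.

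First I would bound $e$ from below in terms of $v$: since $P$ is a \poly{3}tope every vertex has degree at least~$3$, and summing degrees counts each edge twice, so $2e \ge 3v$. Next I would bound $e$ from above in terms of $v$ using the hypothesis on facets: each facet has at least~$4$ edges and each edge lies in exactly two facets, so $2e \ge 4f$, i.e.\ $f \le e/2$. Substituting $f = 2 - v + e$ into $2f \le e$ yields $4 - 2v + 2e \le e$, that is, $e \le 2v - 4$.

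Combining the two bounds gives $3v \le 2e \le 4v - 8$, whence $v \ge 8$. No step is really an obstacle here; the only thing to double-check is that both inequalities $2e \ge 3v$ and $2e \ge 4f$ apply to any \poly{3}tope (the first uses \connected{3}ness of the graph, which forces minimum degree $\ge 3$; the second uses only that every \poly{3}tope facet is a polygon with at least three edges, combined with the no-triangle hypothesis that upgrades three to four).
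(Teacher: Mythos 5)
Your proof is correct and follows essentially the same route as the paper: both arguments rest on Euler's formula combined with double counting of vertex--edge and face--edge incidences (the paper just packages this as the identity $v_3+p_3=8+\sum_{k\ge5}(k-4)(v_k+p_k)$, which with $p_3=0$ even gives the slightly stronger conclusion that there are at least $8$ vertices of degree~$3$). No gaps.
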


\begin{proof}
Let~$P$ be a \poly{3}tope. For~$k\ge3$, denote by~$v_k$ the number of vertices of degree~$k$ and by~$p_k$ the number of \face{2}s with~$k$~vertices. By double counting and Euler's Formula (see \cite[Chapter~13]{g-cp-03} for details),
$$v_3+p_3 = 8+\sum_{k\ge5} (k-4)(v_k+p_k).$$
The lemma immediately follows.
\end{proof}

\begin{proof}[Proof of Proposition~\ref{prop:knnxk2}]
Observe that~$K_2\times K_{n,n}$ is not \poly{d}topal for~$d\le3$ because it contains a~$K_{3,3}$-minor, and for~$d=n+1$ by Theorem~\ref{theo:simplypolytopalproducts}.

The proof proceeds by contradiction. Suppose that~$K_2\times K_{n,n}$ is the graph of a \poly{d}tope~$P$, for some~$d$ with~$3\le d\le n$, and consider a \face{3}~$F$ of~$P$. Since ${K_2\times K_{n,n}}$~contains no triangle, Lemma~\ref{lem:8verts} says that $F$~has at least $8$~vertices. Denote by~$A$~and~$B$ the two maximal independent sets in~$K_{n,n}$, and by~$A_0,B_0,A_1,B_1$ their corresponding copies in the Cartesian product~$K_2\times K_{n,n}$. We discuss the possible repartition of the vertices of~$F$ in these sets.

Assume first that~$F$~has at least three vertices in~$A_0$; let~$x,y,z$ be three of them. Then it cannot have more than two vertices in~$B_0$, because otherwise its graph would contain a copy of~$K_{3,3}$. In fact, there must be exactly two vertices~$u,v$ in~$B_0$: since any vertex of~$F$ has degree at least~$3$, and each vertex in~$A_0$ can only be connected to vertices in~$B_0$ or to its corresponding neighbor in~$A_1$, each vertex of~$F$ in~$A_0$ must have at least, and thus exactly, two neighbors in~$B_0$ and one in~$A_1$. Thus,~$F$ also has at least three vertices in~$A_1$, and by the same reasoning, there must be exactly two vertices in~$B_1$; call one of them~$w$. But now~$\{x,y,z\}$ and~$\{u,v,w\}$ are the two maximal independent sets of a subdivision of~$K_{3,3}$ included in~$F$.

By symmetry and Lemma~\ref{lem:8verts}, $F$~has exactly two vertices in each of the sets $A_0,B_0,A_1,B_1$. Since all these vertices have degree~$3$, we have proved that~$P$'s only $3$-faces are combinatorial cubes whose graphs are Cartesian products of~$K_2$ with \cycle{4} in~$A_0\cup B_0=K_{n,n}$. However, this \cycle{4} is not contained in any other \face{3}, which is an obstruction to the existence of~$P$.
\end{proof}

\begin{proposition}\label{prop:GxK2}
If~$H$ is a non-polytopal and \regular{3} graph, then~$K_2\times H$ is non-polytopal.
\end{proposition}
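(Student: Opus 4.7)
The plan is to rule out polytopality of $K_2\times H$ in each possible dimension $d$. Since $K_2\times H$ is $4$-regular, we must have $d\le 4$ (and $d\ge 3$, as the graph is neither a cycle nor an edge). The case $d=4$ is immediate from Theorem~\ref{theo:simplypolytopalproducts}: a $4$-regular $4$-polytope is simple, and that theorem would then force both factors to be simply polytopal, making $H$ simply $3$-polytopal and contradicting the hypothesis. It remains to rule out $d=3$.

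For $d=3$, Steinitz' Theorem~\ref{theo:steinitz} forces $K_2\times H$ to be planar and $3$-connected. I would split the analysis on the connectivity of $H$. If $\kappa(H)\le 1$, then the formula $\kappa(K_2\times H)=\min(|V(H)|,\,2\kappa(H),\,\delta(K_2)+\delta(H))$ from~\cite{s-ccpg-08} (recalled in the previous subsection) gives $\kappa(K_2\times H)\le 2$, contradicting $3$-connectivity. So we may assume $H$ is $2$-connected and the task reduces to showing that $K_2\times H$ cannot be planar.

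For this I would combine two ingredients. First, the classical fact that the prism $K_2\times G$ is planar if and only if $G$ is outerplanar: in any planar embedding of $K_2\times G$, the connected subgraph $G_1$ lies inside a single face $f$ of the restricted embedding of $G_0$, and every vertex of $G_0$ must touch $f$ in order to accommodate its vertical matching edge to $G_1$, so $G_0\cong G$ is outerplanar. Second, no $3$-regular $2$-connected graph is outerplanar: a $2$-connected outerplanar graph has a Hamiltonian outer cycle $C$ with all remaining edges pairwise non-crossing chords, and $3$-regularity would force these chords to form a non-crossing perfect matching of the cyclically ordered vertices of $C$ consisting only of non-consecutive pairs; but a one-line induction shows that any non-crossing perfect matching on cyclically ordered points contains at least one pair of consecutive points, contradicting the definition of a chord. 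Hence $H$ is not outerplanar, $K_2\times H$ is not planar, and the $d=3$ case is excluded.

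The main obstacle is the subcase $\kappa(H)=2$ of the $d=3$ analysis: there $K_2\times H$ is actually $4$-connected, all the elementary necessary conditions of Proposition~\ref{prop:necessaryconditions} are satisfied, and even the Euler-based face count does not give a contradiction in general (a small $10$-vertex example in which the two $2$-cut halves of $H$ are copies of $K_4$ minus an edge saturates the bound with $8$ triangles and $14$ product $4$-cycles as forced $2$-faces). The outerplanarity characterization of planar prisms, combined with the non-crossing matching lemma, is what closes this gap.
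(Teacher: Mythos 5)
Your proof is correct, and your $d=4$ step coincides with the paper's: $4$-regularity forces any realizing $4$-polytope to be simple, and Theorem~\ref{theo:simplypolytopalproducts} then makes $H$ simply polytopal, a contradiction. For $d=3$ you take a genuinely different route. The paper splits on whether $H$ has a $K_4$-minor: if it does, then $K_2\times H$ has a $K_5$-minor (use the four branch sets in one copy of $H$ plus the entire other copy of the relevant component as a fifth), hence is not planar; if it does not, $H$ would be series-parallel, and since series and parallel extensions always leave either a multiple edge or a vertex of degree~$2$, no simple $3$-regular graph arises this way, so that case is vacuous. You instead use the characterization that the prism $K_2\times G$ over a connected $G$ is planar iff $G$ is outerplanar, together with the fact that a $2$-connected $3$-regular graph cannot be outerplanar (its chords would form a non-crossing perfect matching of the Hamiltonian outer cycle with no consecutive pair, which is impossible), and you dispose of $\kappa(H)\le 1$ via Balinski and the product connectivity formula. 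Both arguments are sound, and at bottom both exploit that a simple $3$-regular graph is too dense for the relevant minor-closed class (no $K_4$-minor for the paper; no $K_4$- and no $K_{2,3}$-minor, i.e.\ outerplanarity, for you). The paper's version is shorter and needs no connectivity case split, since the $K_4$-minor $\Rightarrow$ $K_5$-minor step works even for disconnected $H$; yours costs the extra split but isolates a reusable structural fact about planarity of prisms.
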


\proof
We distinguish two cases:
\begin{enumerate}[(i)]
\item If~$H$ contains a~$K_4$-minor, then~$K_2\times H$ is not \poly{3}topal because it contains a~$K_5$-minor, and it is not \poly{4}topal by Theorem~\ref{theo:simplypolytopalproducts}. Since~$K_2\times H$ is \regular{4}, these are the only possibilities.
\item Otherwise,~$H$ is a series-parallel graph. Thus, it can be obtained from~$K_2$ by a sequence of \defn{series} and \defn{parallel} extensions, \ie subdividing or duplicating an edge. Since duplicating an edge creates a double edge, and subdividing an edge yields a vertex of degree two, $H$~is either not simple or not \regular{3}; since our graphs are simple by assumption, this case cannot occur.\qed
\end{enumerate}

To complete our collection of examples of products with a segment, we examine the possible realizations of the graph of the prism over the octahedron:

\begin{proposition}\label{prop:prismoctahedron}
The graph of the prism over the octahedron is realized by exactly four combinatorially different polytopes.
\end{proposition}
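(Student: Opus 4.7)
The plan is first to argue that any realization $P$ of the graph $G=K_2\times K_{2,2,2}$ must be $4$-dimensional, and then to classify such $P$ by analyzing how each of the two ``layer octahedra'' sits inside. Since $G$ is $5$-regular, $\dim P \le 5$. The graph is $4$-polytopal (realized by the prism over the octahedron itself), so by Remark~\ref{remark:3poly} dimension $3$ is excluded. Dimension $5$ would force $P$ to be simple (a $5$-regular $5$-polytope has $4$-simplex vertex figures) and, by Theorem~\ref{theo:simplypolytopalproducts}, would require both factors $K_2$ and $K_{2,2,2}$ to be simply polytopal; but $K_{2,2,2}$ is the graph of the octahedron, hence by Remark~\ref{remark:3poly} it is not $4$-polytopal. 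Therefore $\dim P = 4$.

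Let $O_i=\{i\}\times V(K_{2,2,2})$, $i=0,1$, denote the two layers, each inducing a copy of the octahedron graph inside $G$. The key structural claim to prove is that in any $4$-polytopal realization $P$, the convex hull of each $O_i$ lies in a supporting hyperplane of $P$, and it appears there either as a single octahedral $3$-face of $P$, or is cut along exactly one of the three equatorial squares of the octahedron into two square-pyramid $3$-faces meeting along that quadrilateral $2$-face. The argument uses that each vertex of $O_i$ has exactly one neighbor outside $O_i$ (its vertical partner in $O_{1-i}$), so the six vertices of $O_i$ form a ``wall'' in $\partial P$ separating the two halves of $P$; any further subdivision would require an interior vertex at the octahedron's centroid or an interior edge between antipodal octahedron vertices, either of which would add an edge to the graph of $P$ not present in $G$.

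Given this dichotomy, $P$ is determined by specifying, for each layer, whether it is ``flat'' or ``split along one of the three equators.'' Modding out by the automorphism group of $G$ (the $\mathbb{Z}_2$ swapping layers together with the $S_3$ permuting the octahedron's three axes) collapses the $(1+3)^2=16$ raw choices into exactly four orbits:
(a) both layers flat, giving the prism over the octahedron with $f$-vector $(12,30,28,10)$;
(b) exactly one layer split, with $f$-vector $(12,30,29,11)$;
(c) both layers split along the same equator;
(d) both layers split along two different equators. Both (c) and (d) have $f$-vector $(12,30,30,12)$, but are combinatorially distinguished by the fact that in (c) the two new quadrilateral $2$-faces share a common projection onto the octahedron, so that their $8$ vertices, together with $4$ vertical edges and $4$ vertical square $2$-faces, bound a combinatorial cube inside $\partial P$, whereas in (d) no such configuration occurs. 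Each of the four types is realized explicitly via Proposition~\ref{prop:subdiv} by choosing the lifting function on the octahedron factor to produce the appropriate subdivision(s), with a mild extension of the construction in cases (b) and (d) to allow the top and bottom copies of the octahedron to be subdivided independently.

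The main obstacle will be the structural claim of the second paragraph. Showing that each $O_i$ lies in a single supporting hyperplane and admits only the listed subdivisions requires a careful vertex-figure analysis at each vertex of $O_i$---the $5$-vertex $3$-polytope vertex figure is either a square pyramid or a triangular bipyramid---together with the intersection condition on $2$-faces, in order to exclude configurations in which $O_i$ would span a full $4$-dimensional region of $P$ or would be subdivided in some more exotic way.
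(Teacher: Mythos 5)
Your outline reaches the right answer and its skeleton matches the paper's proof: pin the dimension to $4$ (your argument for this, via Remark~\ref{remark:3poly} and Theorem~\ref{theo:simplypolytopalproducts}, is essentially the paper's), show that each octahedral layer contributes either a single octahedral $3$-face or two square pyramids glued along one of the three equatorial squares, and count $1+1+2=4$ up to symmetry. But the heart of the proof is exactly the ``structural claim'' you defer, and the sketch you offer for it does not work. The ``wall''/interior-vertex heuristic is not an argument: nothing a priori prevents the layer from carrying more exotic $2$- and $3$-faces without introducing new vertices or edges, and ruling this out is where all the work lies. The paper's route is to observe that any realization is \emph{almost simple} (its graph is $5$-regular while $d=4$), prove Proposition~\ref{prop:almostsimple} about vertex figures being $(d-1)$-circuits (each edge at a vertex has at most one ``missing angle'', and a missing angle forces the two $2$-faces through that edge into a common $3$-face), use this to show every $2$-face of a realization is a triangle or a square (by following an alleged longer face around and producing an edge with two missing angles), and only then run the dichotomy on whether some equatorial angle spans a $2$-face. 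Your proposed vertex-figure analysis is the right tool --- the $3$-polytopes with $5$ vertices are indeed the square pyramid and the triangular bipyramid --- but it is announced rather than carried out.

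One of your intermediate assertions is also false as stated: in the split cases the convex hull of a layer does \emph{not} lie in a supporting hyperplane. If the six vertices of $O_i$ lay in a supporting hyperplane $H$, then $P\cap H$ would be a face of $P$ equal to the full octahedron $\conv(O_i)$, and the equatorial square would be interior to that face, not a $2$-face of $P$. In the realizations the two square pyramids are two distinct facets lying in two distinct hyperplanes, with the layer ``folded'' along the equator; so this part of the claim must simply be dropped. On the positive side, you do address why configurations (c) and (d) (same equator versus orthogonal equators) are combinatorially inequivalent despite sharing an $f$-vector, a point the paper leaves implicit.
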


In order to exhibit four different realizations, we recall the situation and the proof of Proposition~\ref{prop:subdiv}. Given the graph~$G$ of a \poly{d}tope~$P$ and the graph~$H$ of a regular subdivision of an \poly{e}tope~$Q$ defined by a lifting function~$\omega:V(Q)\to\R$, we construct a \poly{(d+e)}tope with graph~$G\times H$ as follows: we start from the product~$P\times Q$ and we lift each face~$\{p\}\times Q$ using~$\omega$. This subdivides~$\{p\}\times Q$, creating the subgraph~$\{p\}\times H$ of the product~$G\times H$. Observe now that the deformation can be different at each vertex of~$P$: we can use a different lifting function at each vertex of~$P$, and produce combinatorially different polytopes.

To come back to our example, denote by~$H$ the graph of the octahedron. Observe that the octahedron has four regular subdivisions with no additional edges: the octahedron itself (for a constant lifting function), and the three subdivisions into two Egyptian pyramids glued along their square face (for a lifting function that vanishes in the common square face and is negative at the other two vertices). This leads to four combinatorially different realizations of~$K_2\times H$: in our previous construction, we can choose either the octahedron at both ends of the segment (thus obtaining the prism over the octahedron), or the octahedron at one end and the glued Egyptian pyramids at the other, or the glued Egyptian pyramids at both ends of the segment (and this leads to two possibilities according to whether we choose the same square or two orthogonal squares to subdivide the two octahedra).
%See \fref{fig:prismoctahedron}.

\begin{figure}[htb]
	\centerline{\includegraphics[width=\textwidth]{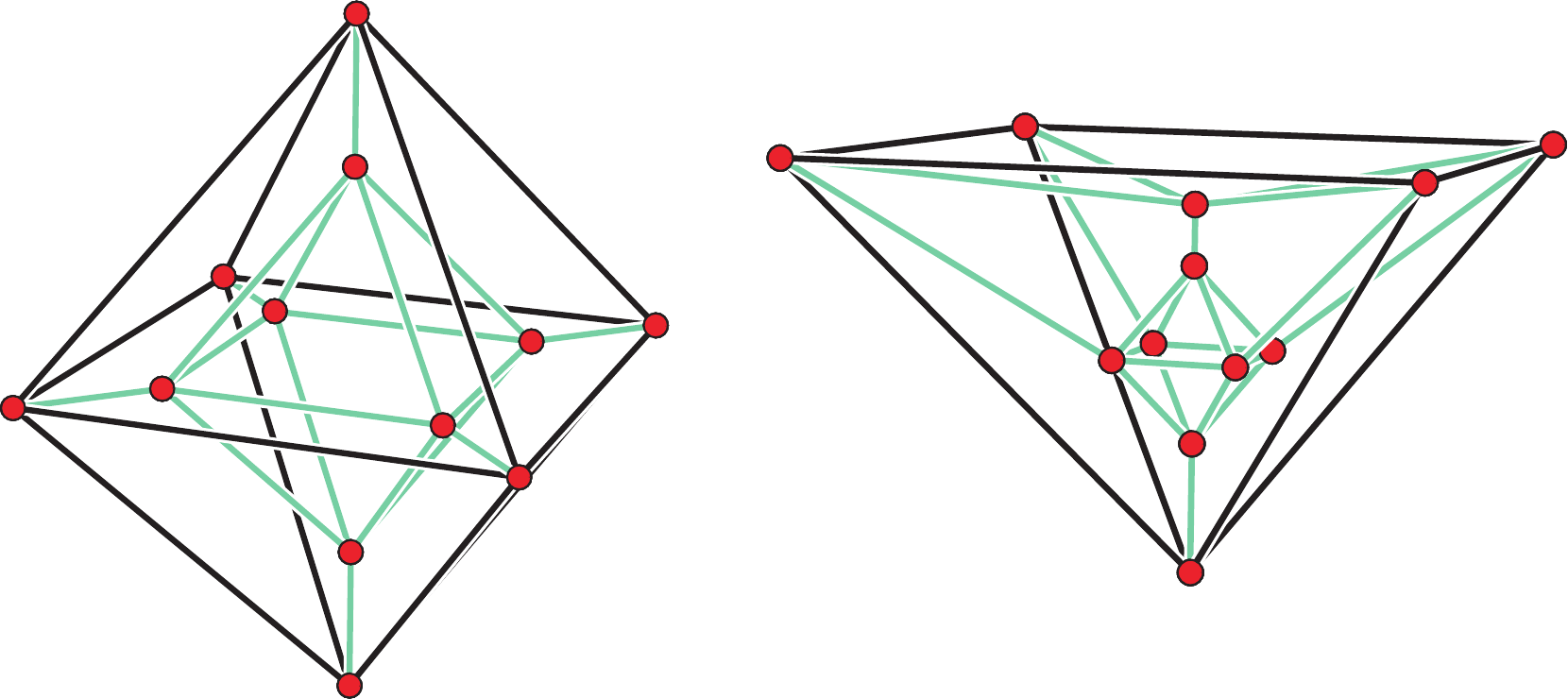}}
	\caption{The prism over the octahedron (left) and a combinatorially different polytope with the same graph (right).}
	\label{fig:prismoctahedron}
\end{figure}

In fact, by the same argument, we can even slightly improve Proposition~\ref{prop:subdiv}:

\begin{observation}
Let~$G$ be the graph of a \poly{d}tope~$P$ and~$H$ be the graph of an \poly{e}tope~$Q$. For each~$v\in G$, choose a lifting function~$\omega_v:V(Q)\to\R$, and denote by~$H_v$ the graph of the corresponding regular subdivision of~$Q$. Then the graph obtained by replacing in~$G\times H$ the subgraph~$\{v\}\times H$ by~$\{v\}\times H_v$ is polytopal.

The result remains true if we allow the use of extra, perhaps interior, points of $Q$ as vertices of the subdivisions, as long as~$d>1$ and all the subdivisions of~$Q$ have the same ones.
\end{observation}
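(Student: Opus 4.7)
My plan is to adapt the construction in the proof of Proposition~\ref{prop:subdiv}, now using a separate lifting function~$\omega_v$ at each vertex~$v$ of~$P$. The idea is to deform each face $\{v\} \times Q$ of the product $P \times Q \subset \R^{d+e}$ independently, pushing its vertices slightly outward in a direction of the outer normal cone of~$P$ at~$v$, with amplitude controlled by~$\omega_v$.

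Assume $0 \in \mathrm{int}(P)$. For each $v \in V(P)$ pick a vector~$c_v$ in the relative interior of the outer normal cone of~$P$ at~$v$, normalized so that $\dotprod{c_v}{v} = 1$ and $\dotprod{c_v}{v'} < 1$ for every $v' \in V(P) \ssm \{v\}$. For a sufficiently small $\epsilon > 0$, set
$$\rho(v,q) \eqdef (v + \epsilon\,\omega_v(q)\,c_v,\,q) \in \R^{d+e}, \qquad R \eqdef \conv\set{\rho(v,q)}{v \in V(P),\ q \in V(Q)}.$$
To verify that $R$ has the announced graph, I would exhibit two families of facets mirroring Proposition~\ref{prop:subdiv}. \emph{(i)} For every facet~$g$ of~$Q$ defined by $\dotprod{\psi}{y} \leq 1$, the same inequality (now in~$\R^{d+e}$) depends only on the $y$-coordinate and hence gives a ``$P \times g$''-facet of~$R$ with vertex set $\set{\rho(v,q)}{v \in V(P),\ q \in V(g)}$, unaffected by the perturbation. \emph{(ii)} For every $v \in V(P)$ and every cell~$c$ of~$H_v$, writing the corresponding upper facet of the lifting of~$Q$ as $\psi_0 h + \dotprod{\psi}{y} \leq 1$, a suitable rescaling of $\tfrac{\psi_0}{\epsilon}\dotprod{c_v}{x} + \dotprod{\psi}{y} \leq \tfrac{\psi_0}{\epsilon} + 1$ is tight on $\set{\rho(v,q)}{q \in V(c)}$ and strict elsewhere: at $\rho(v,q)$ the inequality reduces exactly to the upper-facet condition of the lifting, whereas at $\rho(v',q)$ with $v' \neq v$ the leading term $\tfrac{\psi_0}{\epsilon}(\dotprod{c_v}{v'} - 1) < 0$ swamps every bounded contribution once $\epsilon$ is small.

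The main obstacle will be promoting these $e$-dimensional faces of type~\emph{(ii)} to actual $(d+e-1)$-dimensional facets of~$R$ when $d > 1$: they are facets directly only when $d = 1$, and for $d > 1$ they must be combined with facet-defining inequalities of~$P$ into ``mixed'' facets, by taking linear combinations analogous to $\chi(x,y) = \psi_0\dotprod{\phi}{x} + \dotprod{\psi}{y}$ from Proposition~\ref{prop:subdiv}, now properly adjusted for the distinct liftings $\omega_v$ along each face of~$P$. For sufficiently small~$\epsilon$, the combined perturbations at the vertices of a face $f$ of~$P$ induce a coherent regular subdivision of $f \times Q$ whose restrictions to $\{v\} \times Q$ recover the~$H_v$'s, and the standard ridge-counting argument (any $(d+e-2)$-face of a listed facet lies in exactly two listed facets) then ensures that the list is exhaustive, yielding the announced graph. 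For the extension with interior vertices, I would enlarge the vertex set of $R$ by $(v + \epsilon\,\omega_v(q)\,c_v,\,q)$ for $v \in V(P)$ and $q$ in the common interior-point set~$I$; the hypothesis $d > 1$ provides enough codimension in each normal cone to realize these as additional vertices, while the shared~$I$ aligns the fibers over interior points consistently across the vertices of~$P$.
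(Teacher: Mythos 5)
Your overall strategy---perturb each fiber $\{v\}\times Q$ of $P\times Q$ independently and then identify the facets as in Proposition~\ref{prop:subdiv}---is exactly the route the paper intends, but the two steps you defer are precisely where the argument breaks, and as written both fail. First, asking only that $c_v$ lie in the relative interior of the normal cone of~$P$ at~$v$ is not enough. Normal cones can be obtuse (already at a vertex of an equilateral triangle with the origin at its center), so one can legally choose $c_v$ together with a nonzero $\eta$ in the interior of the same normal cone satisfying $\dotprod{\eta}{c_v}=0$. For small $\epsilon$ the hyperplane $\dotprod{\eta}{x}=\dotprod{\eta}{v}$ then supports $R$ exactly along $\conv\set{\rho(v,q)}{q\in V(Q)}$, which is an $(e+1)$-dimensional affine copy of the whole lifted polytope $\conv\set{(\omega_v(q),q)}{q\in V(Q)}$. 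Its graph contains the edges of the \emph{lower} envelope of the lifting as well, so the fiber over $v$ acquires edges not in~$H_v$ (both diagonals of a square, say). The condition you actually need is $\dotprod{\phi}{c_v}>0$ for every facet normal~$\phi$ of~$P$ at~$v$; the radial choice $c_v=v$ implicit in Proposition~\ref{prop:subdiv}, i.e.\ $\rho(v,q)=(\omega_v(q)\,v,q)$, gives this for free because $\dotprod{\phi}{v}=1$.

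Second, and more seriously, the assertion that the perturbations ``induce a coherent regular subdivision of $f\times Q$ whose restrictions to $\{v\}\times Q$ recover the $H_v$'s'' and that ridge-counting then ``yields the announced graph'' is the entire content of the statement for $d>1$, and it is false without a compatibility condition on the~$\omega_v$. The facets of~$R$ lying over a facet~$f$ of~$P$ are upper cells of a regular subdivision of (a perturbation of) $f\times Q$; as soon as two adjacent vertices of~$f$ carry liftings that differ non-affinely, this subdivision is not a product subdivision and its cells have \emph{diagonal} edges $\{(v,q),(v',q')\}$ with $v\ne v'$ and $q\ne q'$, which are then edges of~$R$ but not of the target graph. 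The smallest instance already shows this: take $P$ a triangle, $Q$ a segment, $\omega_{v_1}(q_0)=2$, $\omega_{v_1}(q_1)=1$ and $\omega_{v_2}=\omega_{v_3}\equiv1$. All $H_v$ equal $K_2$, so the target graph is the triangular prism, yet the quadrilateral of~$R$ over the edge $v_1v_2$ is non-planar, splits into two triangles, and creates a diagonal edge. So your construction (like a verbatim transcription of the computation in Proposition~\ref{prop:subdiv}) only proves the Observation if the lifting functions realizing the prescribed $H_v$'s are chosen compatibly over every positive-dimensional face of~$P$, and your proposal neither supplies such a choice nor verifies that one always exists. The treatment of interior points inherits the same problem and is in any case only a plan.
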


It remains to prove that any realization of~$K_2\times H$ is combinatorially equivalent to one of the four described above.  First, the dimension is unambiguous:~$K_2\times H$ can only be \poly{4}topal (by Remark~\ref{remark:3poly} and Theorem~\ref{theo:simplypolytopalproducts}). In particular, any realization is almost simple, in the following sense:

\begin{definition}
A \poly{d}tope is \defn{almost simple} if its graph is \regular{(d+1)}.
\end{definition}

The vertex figures of a simple polytope are all simplices, which implies that any two incident edges in a simple polytope lie in a common \face{2}. For almost simple polytopes, the vertex figures are almost as restricted: they are \defn{$(d-1)$-circuits}, that is, \poly{(d-1)}topes with $d+1$ vertices. This implies the following property:

\begin{proposition}\label{prop:almostsimple}
Let~$\{v,w\}$ be an edge of an almost simple \poly{d}tope $P$. Then:
\begin{enumerate}[(a)]
\item either~$\{v,w\}$ together with any other edge incident to~$v$ forms a \face{2};
\item or there exists exactly one more edge incident to~$v$ which does not form a \face{2} with~$\{v,w\}$. In this case any two \face{2}s both incident to~$\{v,w\}$ lie in a \face{3}.
  \end{enumerate}
\end{proposition}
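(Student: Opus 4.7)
The plan is to reduce everything to the vertex figure $Q := P/v$. Since $P$ is an almost simple $d$-polytope, the vertex $v$ has degree $d+1$, so $Q$ is a $(d-1)$-polytope with exactly $d+1$ vertices. Under the standard bijection between faces of $Q$ and faces of $P$ that contain $v$ (dimension shifted by $1$), the vertices of $Q$ correspond to edges of $P$ at $v$, the edges of $Q$ to $2$-faces of $P$ at $v$, and the $2$-faces of $Q$ to $3$-faces of $P$ at $v$. Let $\tilde w$ denote the vertex of $Q$ associated with the edge $\{v,w\}$.

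First I would bound the degree of $\tilde w$ in $Q$ by iterating the vertex figure construction. The vertex figure $R := Q/\tilde w$ is a $(d-2)$-polytope whose vertices are in bijection with the edges of $Q$ at $\tilde w$, so $|V(R)| = \deg_Q(\tilde w)$. Since any $(d-2)$-polytope has at least $d-1$ vertices, this forces $\deg_Q(\tilde w) \ge d-1$; on the other hand $\deg_Q(\tilde w) \le |V(Q)| - 1 = d$. Hence $\deg_Q(\tilde w) \in \{d-1, d\}$, which is exactly the dichotomy (a)/(b): in case (a), $\tilde w$ is adjacent to every other vertex of $Q$, which translates back to $\{v,w\}$ forming a $2$-face with every other edge at $v$; in case (b), exactly one vertex of $Q$ fails to be adjacent to $\tilde w$, i.e.\ exactly one other edge at $v$ fails to span a $2$-face with $\{v,w\}$.

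For the last sentence of (b), note that in this case $R$ is a $(d-2)$-polytope with exactly $d-1$ vertices, so $R$ is the $(d-2)$-simplex. In particular every pair of vertices of $R$ is joined by an edge, i.e.\ every pair of edges of $Q$ at $\tilde w$ is contained in a common $2$-face of $Q$. Translating this back through the correspondence yields the desired statement: any two $2$-faces of $P$ incident to the edge $\{v,w\}$ are contained in a common $3$-face of $P$.

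The conceptually substantive step is really just the observation that the iterated vertex figure $R = (P/v)/\tilde w$ is $(d-2)$-dimensional, so the only obstacle is being careful that each "face of $P$ through $v$ (and $w$)" really does correspond to a face of $Q$ (resp.\ $R$) with the claimed dimension shift; once this bookkeeping is in place, the proof reduces to the trivial fact that a $(d-2)$-polytope has between $d-1$ and $d$ vertices in the two cases and, when equal to $d-1$, is a simplex.
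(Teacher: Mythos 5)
Your proof is correct and follows essentially the same route as the paper's: pass to the vertex figure $F_v$, a \poly{(d-1)}tope with $d+1$ vertices, observe that $\overline{w}$ has degree $d$ or $d-1$ there, and in the second case use that the edge figure of $\{v,w\}$ is a \simp{(d-2)}. You merely spell out the degree bound (via the vertex count of the iterated vertex figure) that the paper leaves implicit.
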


\begin{proof}
Consider the vertex figure~$F_v$ of~$v$. It is a \poly{(d-1)}tope with~$d+1$ vertices, one of which, say~$\overline{w}$, corresponds to the edge~$\{v,w\}$. This vertex~$\overline{w}$ can be adjacent to either~$d$ or~$d-1$ vertices of~$F_v$. The first case corresponds to statement~(a). In the second case, $\overline{w}$ has exactly one missing edge in~$F_v$ (corresponding to a missing \face{2} in~$P$), but the edge figure of~$\{v,w\}$ is a \simp{(d-2)}. This implies statement~(b).
\end{proof}

With this in mind, we can finally prove Proposition~\ref{prop:prismoctahedron}:

\begin{proof}[Proof of Proposition~\ref{prop:prismoctahedron}]
We introduce some notations: let~$V \eqdef \{1,\bar1,2,\bar2,3,\bar3\}$ denote the $6$ vertices of the octahedron such that~$\{1,\bar1\}$, $\{2,\bar2\}$ and $\{3,\bar3\}$ are the three missing edges, and let $a$ and $b$ denote the two endpoints of the segment factor. We denote the vertices of $K_2\times H$ by~$\{1a,1b,\bar1a,\dots,\bar3b\}$. We call \defn{horizontal edges} the edges of the form~$\{ia,ib\}$, for $i\in V$, and \defn{vertical edges} the edges of the form~$\{ix,jx\}$, for~$i\in V$, $j\in V\ssm\{i,\bar i\}$ and~$x\in\{a,b\}$.

We first use ``almost simplicity'' to study the possible \face{2}s of a realization~$P$ of~$K_2\times H$. Assume that there exists a \face{2} $F$ which is neither a triangle nor a square. It has to contain an angle between a horizontal edge and a vertical edge, say without loss of generality~$\{1a,1b\}$ and~$\{1a,2a\}$. By inducedness, the next edges of~$F$ are necessarily~$\{2a,\bar1a\}$ and~$\{\bar1a,\bar1b\}$. Since the edges~$\{1a,1b\}$ and~$\{1a,2a\}$ form an angle of~$F$, the two edges~$\{2a,2b\}$ and~$\{1b,2b\}$ cannot form an angle: otherwise the \cycle{4}~$(1a,1b,2b,2a)$ would form a square face which improperly intersects~$F$. Similarly, since the edges~$\{2a,\bar1a\}$ and~$\{\bar1a,\bar1b\}$ form an angle, the edges~$\{2a,2b\}$ and~$\{2b,\bar1b\}$ cannot form an angle. Thus,~$\{2a,2b\}$ is adjacent to two missing angles, which is impossible by Proposition~\ref{prop:almostsimple}. We conclude that the \face{2}s of any realization of~$K_2\times H$ can only be squares and triangles. 

We now use this information on the \face{2}s to understand the possible \face{3}s of~$P$. Assume that none of the angles of the \cycle{4}s~$(1a,2a,\bar1a,\bar2a)$, $(1a,3a,\bar1a,\bar3a)$, and~$(2a,3a,\bar2a,\bar3a)$ forms a \face{2}. Then for each~$i\in V$, the vertex~$ia$ has already two missing angles. Consequently, the remaining angles necessarily form a \face{2} of~$P$ by Proposition~\ref{prop:almostsimple}. By inducedness, we obtain all the triangles of the $a$-copy of $H$, and any two adjacent of these triangles are contained in a common \face{3}. This \face{3} is necessarily an octahedron.

Assume now that one of the angles of the \cycle{4}s~$(1a,2a,\bar1a,\bar2a)$, $(1a,3a,\bar1a,\bar3a)$, and $(2a,3a,\bar2a,\bar3a)$ forms a \face{2}. By symmetry, we can suppose that it is the angle defined by the edges~$\{1a,2a\}$ and~$\{2a,\bar1a\}$. Let~$F$ denote the corresponding \face{2} of~$P$. By inducedness, the last vertex of~$F$ cannot be either~$3a$ or~$\bar3a$, and~$F$ is necessarily the square~$(1a,2a,\bar1a,\bar2a)$. It is now easy to see that none of the angles of the \cycle{4}~$(1a,3a,\bar1a,\bar3a)$ (resp.~$(2a,3a,\bar2a,\bar3a)$) can be an angle of a \face{2} of $P$: otherwise, this \cycle{4} would be a \face{2} of~$P$ (by a symmetric argument), which would intersect improperly with~$F$. All together, this implies that the vertices~$3a$ and~$\bar3a$ both have already two missing angles, and thus, that all the other angles form \face{2}s by Proposition~\ref{prop:almostsimple}. Furthermore, any two \face{2}s adjacent to an edge~$\{3a,ia\}$, with~$i\in\{1,\bar1,2,\bar2\}$, form a \face{3}. This implies that all angles adjacent to a vertex~$ia$, except the angles of the \cycle{4}s~$(1a,3a,\bar1a,\bar3a)$ and~$(2a,3a,\bar2a,\bar3a)$ form a \face{2}.

Since the two above cases can occur independently at both ends of the segment~$(a,b)$, we obtain the claimed result.
\end{proof}

%%%%%%%%%%%

\subsection{Topological products}\label{subsec:topologicalproducts}

To finish, we come back to Ziegler's motivating question~\ref{qu:petersen}: ``is the product of two Petersen graphs polytopal?'' We proved in Theorem~\ref{theo:simplypolytopalproducts} that it is not~$6$ polytopal, but the question remains open in dimension~$4$ and~$5$.

\begin{proposition}\label{prop:pseudomanifold}
The product of two Petersen graphs is the graph of a cellular decomposition of $\RP^2 \times \RP^2$.
\end{proposition}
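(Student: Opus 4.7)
The plan is to identify each Petersen factor as the $1$-skeleton of a well-known cellular decomposition of $\RP^2$, and then to invoke the standard product CW-structure.

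First, I would recall the \emph{hemi-dodecahedron}: the antipodal involution on $S^2$ identifies the $20$ vertices, $30$ edges, and $12$ pentagonal faces of the regular dodecahedron in pairs, yielding a regular cellular decomposition $\mathcal{H}$ of $\RP^2$ with $10$ vertices, $15$ edges, and $6$ pentagonal $2$-cells; the Euler count $10-15+6=1=\chi(\RP^2)$ is a sanity check. A direct inspection of the antipodal pairing of the dodecahedron's vertices (equivalently, the Kneser-graph description $K(5,2)$ one obtains after the quotient) shows that the $1$-skeleton of $\mathcal{H}$ is exactly the Petersen graph $\Pi$.

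Second, I would use the product CW-structure. For any two CW-complexes $X$ and $Y$, the space $X\times Y$ admits a canonical CW-decomposition whose open cells are the products $c\times c'$ of an open cell $c$ of $X$ with an open cell $c'$ of $Y$, with $\dim(c\times c')=\dim c+\dim c'$. In particular, the $1$-cells of $X\times Y$ are of two kinds: $\{v\}\times e'$, where $v$ is a $0$-cell of $X$ and $e'$ a $1$-cell of $Y$; and $e\times\{w\}$, where $e$ is a $1$-cell of $X$ and $w$ a $0$-cell of $Y$. Consequently, the $1$-skeleton of $X\times Y$ is the Cartesian product of the graphs $\gr(X)\times\gr(Y)$ in exactly the sense used in Section~\ref{sec:product}.

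Applying this to $X=Y=\mathcal{H}$ produces a cellular decomposition $\mathcal{H}\times\mathcal{H}$ of $\RP^2\times\RP^2$ whose $1$-skeleton is $\Pi\times\Pi$, which is the desired conclusion. The only non-formal step is the explicit identification of the $1$-skeleton of the hemi-dodecahedron with the Petersen graph; I do not expect any real obstacle since this is a classical identification, and everything else reduces to the bookkeeping of the product CW-complex.
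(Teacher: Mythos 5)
Your proposal is correct and follows essentially the same route as the paper: the paper also embeds the Petersen graph as the $1$-skeleton of the cellular decomposition of $\RP^2$ into $6$ pentagons (the hemi-dodecahedron) and then takes the product cell structure, whose maximal cells are the $36$ products of two pentagons. You simply make explicit the two ingredients the paper states without elaboration, namely the identification with the hemi-dodecahedron and the fact that the $1$-skeleton of a product CW-complex is the Cartesian product of the $1$-skeleta.
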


\begin{proof}
The Petersen graph is the graph of a cellular decomposition of the projective plane~$\RP^2$ with~$6$ pentagons (see \fref{fig:projectiveembeddings}). Consequently, the product of two Petersen graphs is the graph of a cellular decomposition of~$\RP^2 \times \RP^2$. The maximal cells of this decomposition are~$36$ products of two pentagons.
\end{proof}

\begin{figure}[h]
	\centerline{\includegraphics[scale=1]{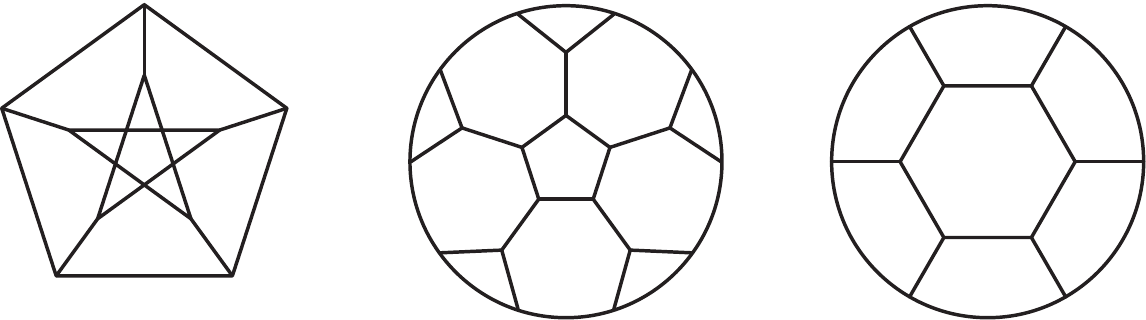}}
	\caption{The Petersen graph (left), its embedding on the projective plane (middle) and the embedding of~$K_{3,3}$ on the  projective plane (right). Antipodal points on the circle are identified.}
	\label{fig:projectiveembeddings}
\end{figure}

This proposition tells that understanding the possible \face{4}s of a realization, and their possible incidence relations (as we did for example in Proposition~\ref{prop:knnxk2}) is not enough to decide polytopality of the product of two Petersen graphs. Observe that the same remark holds for the product of any graphs of cellular decompositions of manifolds: for example, the product of a triangle by the Petersen graph is the graph of a cellular decomposition of~$\cS^1 \times \RP^2$.

Another interesting example is the product of a triangle by~$K_{3,3}$. Indeed, in contrast with $K_2 \times K_{3,3}$, the graph~$K_3 \times K_{3,3}$ is the graph of a cellular decomposition of the manifold~$\cS^1\times\RP^2$. To see this, embed~$K_{3,3}$ in the projective plane~$\RP^2$ as in \fref{fig:projectiveembeddings}, and multiply this embedding by a triangle. This cell decomposition is, however, not \emph{strongly regular}. Here, following~\cite{ekz-4p-03}, we say that a cell decomposition is strongly regular if every closed cell is embedded and the intersection of every two of them is a (perhaps empty) closed cell. Our decomposition fails to have the second property because the embedding of~$K_{3,3}$ in the projective plane already fails to have it: the central hexagon in the embedding of Figure~\ref{fig:projectiveembeddings} improperly intersects the three squares. Consequently, in the product with the triangle, each of the three hexagonal prisms improperly intersects three cubes. This can be solved by a ``Dehn surgery'', replacing the chain of three hexagonal prisms by a chain of six triangular prisms with the same boundary~---~see \fref{fig:decompose-S3}. In fact, it turns out that the cellular decomposition of $\cS^1\times\RP^2$ obtained in this way is the unique (strongly regular) combinatorial manifold whose graph is $K_{3,3} \times K_3$. This example and other results will be discussed in a future publication.

\begin{figure}[h]
	\centerline{\includegraphics[width=.8\textwidth]{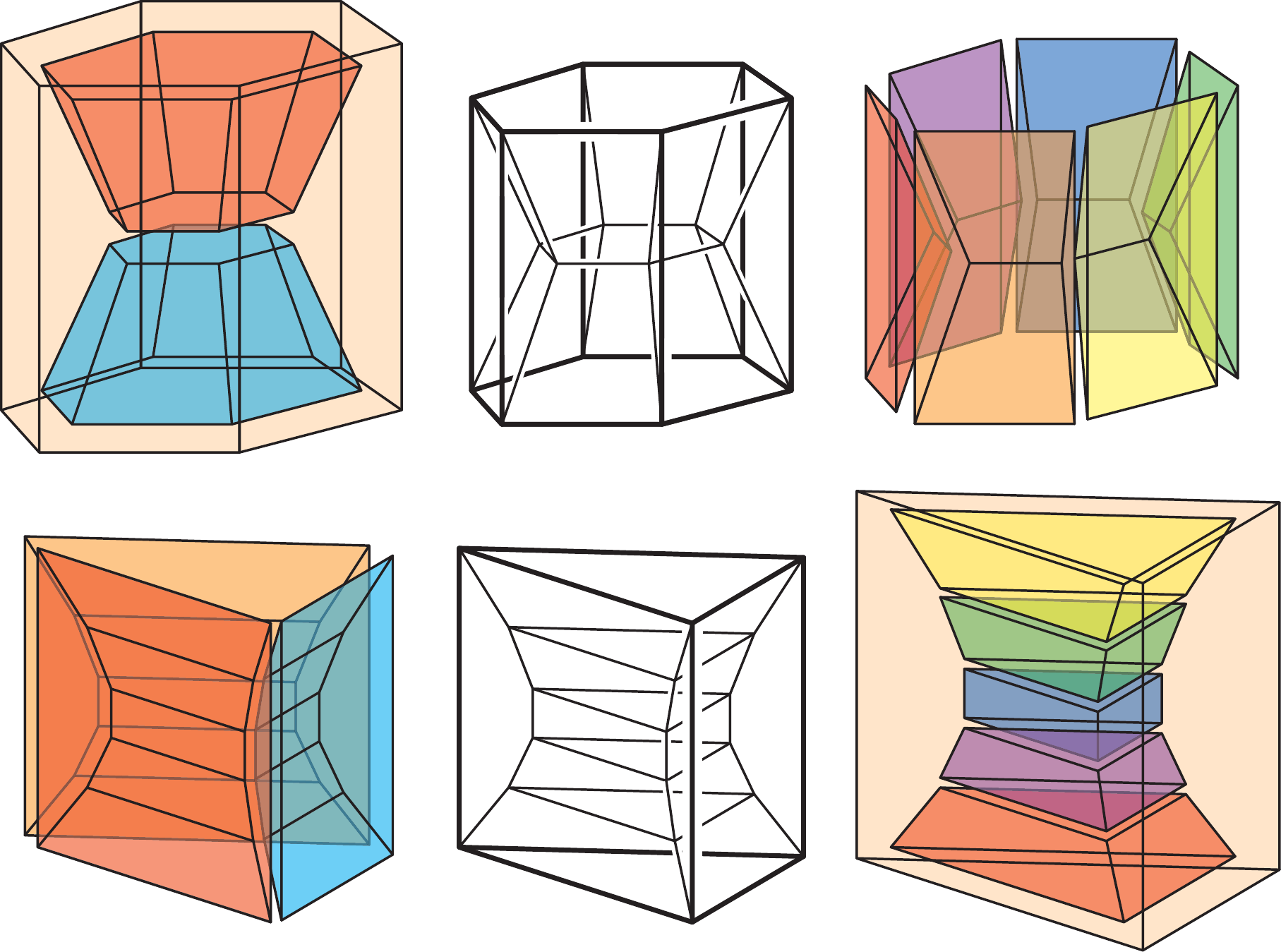}}
	\caption{A decomposition of the $3$-dimensional sphere (middle) using a cycle of three hexagonal prisms (left) and a cycle of six triangular prisms (right).}
	\label{fig:decompose-S3}
\end{figure}

\bibliographystyle{alpha}
\bibliography{biblio.bib}

\end{document}